\theoremstyle{plain}
\newtheorem{theorem}{Theorem}[section]
\newtheorem{lemma}[theorem]{Lemma}
\newtheorem{proposition}[theorem]{Proposition}
\newtheorem{corollary}[theorem]{Corollary}
\newtheorem{definition}[theorem]{Definition}
\theoremstyle{remark}
{
    
    \newtheorem{remark}[theorem]{Remark}

}
\newcommand{\norm}[2]{\left\lVert#1\right\rVert_{#2}}
\newcommand{\CC}{\mathbb{C}}
\newcommand{\lc}{\bar{L}^2(\Omega)}
\renewcommand{\lc}{\bar{H}}
\newcommand{\hc}{\bar{H}^1_0(\Omega)}
\renewcommand{\hc}{\Vb}
\newcommand{\A}{\hat{A}}
\newcommand{\B}{\hat{B}}
\newcommand{\Y}{\mathcal Y}
\newcommand{\Uad}{\mathcal{U}_{ad}}
\newcommand{\Psih}{\hat \Psi}
\newcommand{\xiz}{\xi}
\newcommand{\hatQ}{\widehat \Q}
\def\dd{{\rm d}}
\newcommand{\ddt}{\frac{\rm d}{{\rm d} t} }
\def\weight(#1,#2){c_{#1,#2}}
\def\ra{{\rm ra}}
\def\ph{\hat{p}}
\def\uh{\hat{u}}
\def\xh{\hat{x}}
\def\pb{\bar{p}}
\def\ub{\bar{u}}
\def\xb{\bar{x}}
\def\zb{\bar{z}} 
\def\Hb{\bar{H}}
\def\Ub{\bar{U}}
\def\Vb{\bar{V}}
\def\cala{{\mathcal  A}}
\def\calb{{\mathcal B}}
\def\calh{{\mathcal H}}
\def\calq{{\mathcal Q}}
\def\calu{{\mathcal U}}
\def\calx{{\mathcal X}}
\def\caly{{\mathcal Y}}
\def\cR{{\mathcal R}}
\def\A{\mathcal{A}}
\def\B{\mathcal{B}}
\def\D{\mathcal{D}}
\def\H{\mathcal{H}}
\def\L{\mathcal{L}}
\def\M{\mathcal{M}}
\def\Q{\mathcal{Q}}
\def\T{\mathcal{T}}
\def\U{\mathcal{U}}
\def\X{\mathcal{X}}
\def\Y{\mathcal{Y}}
\newcommand{\Hbc}{\overline{\mathcal H}}
\newcommand{\Xbo}{\overline{X}}
\newcommand{\Ybo}{\overline{ Y}}
\def\eps{\varepsilon}
\def\Om{{\Omega}}
\def\Psih{{\hat\Psi}}
\def\Psih{{\hat\Psi}}
\def\Psih{{\hat\Psi}}
\def\Psih{{\hat\Psi}}
\def\1B{{\bf  1}}
\def\ddiv{\mathop{\rm div}}
\def\dom{\mathop{{\rm dom}}}
\def\Min{\mathop{\rm Min}}
\def\half{\mbox{$\frac{1}{2}$}}
\def\1B{{\bf  1}}
\newcommand{\NN}{\mathbb{N}}
\newcommand{\RR}{\mathbb{R}}
\def\cC{\mathbb{C}}
\def\cN{\mathbb{N}}
\def\cR{\mathbb{R}}
\newcommand\be{\begin{equation}}
\newcommand\ee{\end{equation}}
\newcommand\ba{\begin{array}}
\newcommand\ea{\end{array}}
\newcommand{\bea}{\begin{eqnarray}}
\newcommand{\eea}{\end{eqnarray}}
\newcommand{\bean}{\begin{eqnarray*}}
\newcommand{\eean}{\end{eqnarray*}}
\def\rar{\rightarrow}
\def\disp{\displaystyle}
\def\la{\langle}
\def\ra{\rangle}
\title{Optimal control of PDEs\\ in a complex space setting;\\
application to the Schr\"odinger equation
\thanks{The second and third author were supported by the project "Optimal control of partial differential equations using parameterizing manifolds, model
reduction, and dynamic programming" funded by the Foundation
Hadamard/Gaspard Monge Program for Optimization and Operations
Research (PGMO).}
}
\author{M. Soledad Aronna}
\address{EMAp/FGV, Rio de Janeiro 22250-900, Brazil}
\email{soledad.aronna@fgv.br}
\author{Fr\'ed\'eric Bonnans}
\address{INRIA-Saclay and Centre de 
Math\'ematiques Appliqu\'ees, Ecole Polytechnique, 91128 Palaiseau, France}
\email{Frederic.Bonnans@inria.fr}  
\author{Axel Kr\"oner}
\address{INRIA-Saclay and Centre de 
Math\'ematiques Appliqu\'ees, Ecole Polytechnique, 91128 Palaiseau, France}
\email{Axel.Kroener@inria.fr}  
\author{M. Soledad Aronna\footnotemark[1]
\and 
Joseph Fr\'ed\'eric Bonnans\footnotemark[2]
\and 
Axel Kr\"oner\footnotemark[3]
}
\begin{document}

\maketitle

\begin{center}
\today
\end{center}

\renewcommand{\thefootnote}{\fnsymbol{footnote}}

\if{
\footnotetext[1]{EMAp/FGV, 
Rio de Janeiro 22250-900, Brazil
 (\href{mailto:soledad.aronna@fgv.br}{mailto:soledad.aronna@fgv.br}).
}

\footnotetext[2]{Inria and CMAP, Ecole Polytechnique, 91128 Palaiseau, France
 (\href{mailto:Frederic.Bonnans@inria.fr}{Frederic.Bonnans@inria.fr}).
}

\footnotetext[3]{Inria and CMAP, Ecole Polytechnique, 91128 Palaiseau, France
 (\href{mailto:Axel.Kroener@inria.fr}{Axel.Kroener@inria.fr}).
}
}\fi

\begin{abstract}
  In this paper we discuss optimality conditions for abstract
optimization problems over complex spaces.
We then apply these results to optimal control problems 
with a semigroup structure. 
As an application we detail the case when the state equation
is the Schr\"{o}dinger one, 
with pointwise constraints on the ``bilinear'' control.
We derive first and second order optimality conditions and address in particular the case that the control enters the state equation and cost function linearly.
\end{abstract}



{\sc\small Keywords}: \keywords{\small
Optimal control, partial differential equations, optimization in
complex Banach spaces,  second-order optimality conditions,
Goh-transform, 
semigroup theory, Schr\"odinger equation, bilinear control systems.  } 

\pagestyle{myheadings}
\thispagestyle{plain}
\markboth{MARIA SOLEDAD ARONNA, JOSEPH FR\'ED\'ERIC BONNANS, AND AXEL KR\"ONER}{OPTIMAL CONTROL OF PDES IN A COMPLEX SPACE SETTING}


\section{Introduction}
In this paper we derive no gap second order optimality conditions for
optimal control problems in a complex Banach space setting 
with pointwise constraints on the control.
This general framework includes, in particular, optimal control problems for the bilinear Schr\"odinger equation.

Let us consider $T>0$, $\Omega \subset \mathds{R}^n$ an open bounded set,
$n\in  \mathds{N}$, $Q:=(0,T) \times
\Omega$,  and $\Sigma=(0,T)\times \partial \Omega$.
The Schr\"{o}dinger equation is given by
\begin{align}\label{Equ0}
  i\dot \Psi(t,x) + \Delta \Psi(t,x)  - 
u(t) B(x) \Psi(t,x) =0,\quad \Psi(x,0)=\Psi_0(x),
\end{align}
where $t\in (0,T)$, $x\in \Om$, and 
with $u:[0,T] \to \mathds{R}$ the time-dependent electric field,
  $\Psi:[0,T]\times \Omega \to \mathds{C}$ the
wave function, and $B:\Omega \to \mathds{R}$ the 
coefficient of the magnetic field.  
The system describes the
probability of position of a quantum particle 
subject to the electric field $u$; that  will be considered as the control throughout this paper.  The wave function $\Psi$ belongs to the unitary sphere in $L^2(\Om;\CC)$.

For $\alpha_1 \in \RR$ and $\alpha_2 \geq 0$,
the optimal control problem is given as
\begin{equation}
  \label{OC}
  \left\{
  \begin{aligned}
      \min J(u,\Psi):= &\disp 
\half \int_\Omega |\Psi(T)- \Psi_{dT}|^2 dx 
+ \half\int_Q |\Psi
-\Psi_d|^2 dxdt
\\& \disp 
+\int_0^T (\alpha_1 u(t) + \half \alpha_2 u(t)^2 ) dt,
    \text{ subject to 
    \eqref{Equ0} and   $u\in \U_{\text{ad}}$},
  \end{aligned}
  \right.
\end{equation}
with $
\U_{\text{ad}}:=\left\{u \in L^{\infty}(0,T) : u_m \le  u(t) \le u_M \text{ a.e. in } (0,T)\right\}$,
 $u_m,u_M\in \mathds{R},$ $u_m<u_m$ and  
$|z| :=\sqrt{z\bar z}$ for $z\in \mathds{C}$, and 
desired running and final states 
$\Psi_d \colon (0,T) \times \Om \rar \CC$ and 
$\Psi_{dT} \colon \Omega \rar \CC$, resp.
The control of the Schr\"odinger equation is an important question 
in quantum physics.
For the optimal control of semigroups, 
the reader is referred to Li et al. \cite{LiYao:1985,LiYong:1991}, Fattorini et al.  \cite{FattoriniFrankowska:1991,MR2158806} 
and Goldberg and Tr\"oltzsch \cite{MR1227544}.
In the context of optimal control of partial differential equations
for systems in which the control enters linearly
in both the state equation and cost function
(we speak of control-linear problems), 
in a companion paper \cite{aronna:hal-01273496},
we have extended the results of Bonnans \cite{Bonnans:2013} 
(about necessary and sufficient second order optimality conditions
for a bilinear heat equation) to  
problems governed by general bilinear systems in a real Banach space setting,
and presented applications to the heat and wave equation.

The contribution of this paper is the extension to a complex Banach space setting of the optimality conditions of a general class of optimization problems and of the framework developed in \cite{aronna:hal-01273496}. More
precisely, we consider 
optimal control problems governed by a strongly
continuous semigroup 
operator defined in a complex Banach space and derive necessary and
sufficient optimality conditions. 
In particular 
(i) the study of strong solutions when $\alpha_2>0$,
and (ii)
the control-affine case, i.e. when $\alpha_2=0$,
are addressed. The results are applied to the Schr\"odinger equation.

While the literature on optimal control of the heat equation is quite
rich (see, e.g., the monograph by Tr\"oltzsch \cite{MR2583281}),
much less is available for the optimal control of the Schr\"odinger equation. 
We list some references on optimal control of Schr\"{o}dinger equation and related topics. In  Ito and Kunisch \cite{ItoKunisch:2009} necessary optimality conditions are
derived and an algorithm is presented to solve the unconstrained problem, in Baudouin et al.
\cite{BaudouinKavianPuel:2005} 
regularity results for the Schr\"{o}dinger equation with a singular
potential are presented,  further regularity results can be found in Baudouin et al. \cite{BaudouinSalomon:2008} and Boscain et al. \cite{BoscainCaponigroSigalotti:2012} and in particular in Ball et al. \cite{BallMarsdenSlemrod:1982}. For a minimum
time problem and controllability problems for the Schr\"{o}dinger equation see Beauchard et al. \cite{BeauchardCoronTeismann:2014,BeauchardMorancey:2013,Beauchard:2005}. For second order
analysis for control problems of control-affine ordinary differential systems see
\cite{aronna:hal-01081111,Goh66a}.
About the case of optimal control of nonlinear Schr\"{o}dinger equations of Gross-Pitaevskii type arising in the description
of Bose-Einstein condensates, see Hinterm\"uller et
al. \cite{HintermuellerMaharensMarkowichSparber:2013}; for sparse
controls in quantum systems 
see Friesecke et al. \cite{FrieseckeHennekeKunisch:2015}.


The paper is organized as follows. 
In Section \ref{sec:opt_cond_complex} necessary optimality conditions for general minimization problems in complex Banach spaces are formulated.
In Section \ref{sec:abset} the abstract control problem is introduced  in
a semigroup setting and some basic calculus rules are established.
In Section \ref{sec:first_ord} first order optimality conditions, in Section \ref{sec:soocg} sufficient second order optimality conditions are presented; sufficient second order optimality conditions for singular problems are presented in Section \ref{sec:soonoc-goh}, again in a general semigroup setting.
Section \ref{sec:applschr} presents the application, resp. the
control of the Schr\"{o}dinger equation and Section \ref{sec:numerical_example} a numerical tests supporting the possibility
of existence of a singular arc.

\section{Optimality conditions in complex spaces}\label{sec:opt_cond_complex}

\subsection{Real and complex spaces}
We consider complex Banach spaces which  can be identified with the
product of two identical real Banach spaces.
That is, with a real Banach space $X$ we associate the complex
Banach space $\Xbo$ with element represented as 
$x_1 +i x_2$, with $x_1$, $x_2$ in $X$ and 
$i = \sqrt{-1}$, and the usual
computing rules for complex variable, in particular, for
$\gamma=\gamma_1 + i \gamma_2 \in \CC$ with
$\gamma_1$, $\gamma_2$ real, we define 
$\gamma x = \gamma_1 x_1 - \gamma_2 x_2 
+ i (\gamma_2 x_1 + \gamma_1 x_2)$.
Define the real and imaginary parts of a $x \in \Xbo$
by $\Re x$ and $\Im x$, resp.

Let $X$ be a real 
Banach space and $\Xbo$ the corresponding complex one. We denote by 
$\la\cdot,\cdot\ra_X$ (resp. $\la \cdot,\cdot\ra_{\Xbo}$)
the duality product (resp. antiduality product,
which is linear w.r.t. the first argument, and antilinear 
w.r.t. the second).
The dual (resp. antidual) of $X$ (resp. $\Xbo$),
i.e. the set of linear (resp. antilinear) forms, 
is denoted by $X^*$ (resp.~$\Xbo^*$). 

\subsection{Optimality conditions}
We next adress the questions of optimality conditions
analogous to the obtained in the case of real
Banach spaces \cite{MR1756264}.
Consider the problem
\be
\Min_{u,x} f(u,x); \;\; g(u,x) \in  K_g; \;\; 
h(u,x) \in K_h.
\ee
Here $U$ and $W$ are real  Banach space, 
$\Xbo$ and $\Ybo$ are complex Banach spaces,
and $K_g$, $K_h$ are nonempty, 
closed convex subsets of $\Ybo$ and $W$ resp. 
The mappings
$f$, $g$, $h$ from $U\times \Xbo$
to respectively, $\RR$, $\Ybo$, and $W$ are of class $C^1$. 
As said before, the complex space $\Xbo$
can be identified to a pair $X\times X$
of real Banach spaces, with dual 
$X^*\times X^*$. Let
$x^* := (x^*_1,x^*_2) \in X\times X$,
$\xh :=(x^*_1,x^*_2)\in X^*\times X^*$.
Setting 
$x:=x_1+i x_2$ and $x^*:= x^*_1+i x^*_2$
observe that (by linearity/antilinearity of 
$\la\cdot,\cdot\ra_{\Xbo}$)
that 
\be
\la x^*,x\ra_{\Xbo}
=
\la x^*_1,x_1\ra_X+\la x^*_2,x_2\ra_X
+ i \left( 
\la x^*_2,x_1\ra_X-\la x^*_1,x_2\ra_X \right),
\ee
and therefore the `real' duality product in 
$X\times X$ given by
$\la x ^*, x\ra_{X\times X} =
\la x^*_1,x_1\ra_X+\la x^*_2,x_2\ra_X$
satisfies 
\be
\label{xvyfd}
\la x^*,\xh\ra_{X\times X} =
\Re \la x^*,x\ra_{\Xbo}.
\ee
Let $\Xbo$, $\Ybo$ be two complex spaces
associated with the real Banach spaces $X$ and $Y$.
The conjugate transpose of $A\in \L (\Xbo,\Ybo)$
is the operator $A^*\in \L (\Ybo^*,\Xbo^*)$ defined by 
\be
\la y^*,A x \ra_{\Ybo} 
=
\la A^*y^*,x\ra_{\Xbo},
\text{ for all $(x,y^*)$ in $\Xbo\times \Ybo^*$.}
\ee
If $A\in L(U,\Ybo)$, identifying the real Banach space $U$
with the space of real parts of the corresponding 
complex Banach space $\Ub$, we may define 
$A^*\in \L (\Ybo^*,\Ub^*)$ by
\be
\la A^*y^*,u\ra_{\Ub} 
=
\la y^*,Au\ra_{\Ybo}.
\ee
Combining this relation with 
\eqref{xvyfd}, we deduce that 
\be
\Re\la y^*,Au\ra_{\Ybo} 
=
\Re\la A^*y^*,u\ra_{\Ub} 
=
\la\Re A^* y^*,u\ra_U.
\ee
We deduce the following expression of normal cones,
for $y\in \Ybo$:
\be
N_{K_g}(y) =\{ y^*\in \Ybo^*; \;\; 
\Re \la y^*,z-y\ra_{\Ybo} \leq 0,
\text{ for all $z\in K_g$}\}.
\ee
For  $\lambda\in \Ybo$ and $\mu\in W$
the Lagrangian of the problem is defined as
\be
L(u,x,\lambda,\mu)
 := f(u,x) +\Re \la\lambda,  g(u,x) \ra_{\Ybo} 
+ \la\mu, h(u,x) \ra_W.
\ee

\begin{lemma}
\label{aocl}
The partial derivatives of the Lagrangian 
are as follows:
\be
\left\{ \ba{lll} \disp
\frac{\partial L}{\partial u} & = \disp
 \frac{\partial f}{\partial u}
+\Re  \left(\frac{\partial g}{\partial u}^*  \lambda \right)
+  \frac{\partial h}{\partial u}^\top \mu,
\\ \disp
\frac{\partial L}{\partial x_r} & = \disp
\frac{\partial f}{\partial x_r}
+\Re \left( \frac{\partial g}{\partial x}^*  \lambda\right)
+  \frac{\partial h}{\partial x_r}^\top \mu,
\\ \disp
\frac{\partial L}{\partial x_i} & = \disp
\frac{\partial f}{\partial x_i}
+ \Im\left(\frac{\partial g}{\partial x}^*  \lambda\right)
+  \frac{\partial h}{\partial x_i}^\top \mu.
\ea\right.\ee
In particular, we have that
\be
\disp
\frac{\partial L}{\partial x_r} 
+ i
\frac{\partial L}{\partial x_i} 
=
\frac{\partial f}{\partial x_r} 
+ i
\frac{\partial f}{\partial x_i} 
+
\frac{\partial g}{\partial x}^*  \lambda
+
\left( \frac{\partial h}{\partial x_r}
+ i
\frac{\partial h}{\partial x_i}\right)^\top \mu.
\ee
\end{lemma}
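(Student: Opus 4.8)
The plan is to differentiate the Lagrangian $L = f + \Re\la\lambda, g\ra_{\Ybo} + \la\mu, h\ra_W$ summand by summand, since partial differentiation is linear. The contribution of $f$ is immediate: as $f$ is real-valued, $\partial f/\partial u$, $\partial f/\partial x_r$ and $\partial f/\partial x_i$ are by definition the three partial derivatives appearing in the claim. The contribution of the term $\la\mu, h\ra_W$ is equally routine: differentiating under the real pairing and moving the derivative of $h$ to the other slot via the (real) transpose yields $\frac{\partial h}{\partial u}^\top\mu$, $\frac{\partial h}{\partial x_r}^\top\mu$ and $\frac{\partial h}{\partial x_i}^\top\mu$, respectively. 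Thus all the content sits in the term $\Re\la\lambda, g\ra_{\Ybo}$.

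For that term I would argue as follows. By the chain rule its directional derivative at $(u,x)$ in a direction $h\in\Xbo$ is $\Re\la\lambda, \frac{\partial g}{\partial x}h\ra_{\Ybo}$, where $\frac{\partial g}{\partial x}\in\L(\Xbo,\Ybo)$ is the complex-linear derivative in the complex variable. Using the definition of the conjugate transpose, $\la\lambda, \frac{\partial g}{\partial x}h\ra_{\Ybo} = \la\frac{\partial g}{\partial x}^*\lambda, h\ra_{\Xbo}$, so the directional derivative equals $\Re\la w, h\ra_{\Xbo}$ with $w := \frac{\partial g}{\partial x}^*\lambda\in\Xbo^*$. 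Now I would invoke the splitting \eqref{xvyfd}, which gives $\Re\la w, h\ra_{\Xbo} = \la\Re w, \Re h\ra_X + \la\Im w, \Im h\ra_X$. Choosing the real perturbation $h = \delta x_r\in X$ isolates $\la\Re w, \delta x_r\ra_X$, whence $\partial/\partial x_r$ of this term is $\Re w = \Re(\frac{\partial g}{\partial x}^*\lambda)$; choosing the imaginary perturbation $h = i\,\delta x_i$ isolates $\la\Im w, \delta x_i\ra_X$, whence $\partial/\partial x_i$ of this term is $\Im w = \Im(\frac{\partial g}{\partial x}^*\lambda)$. For the derivative in $u$ I would use instead the operator $\frac{\partial g}{\partial u}\in\L(U,\Ybo)$ together with the identity $\Re\la y^*, Au\ra_{\Ybo} = \la\Re A^*y^*, u\ra_U$ recorded just before the lemma, which directly produces $\Re(\frac{\partial g}{\partial u}^*\lambda)$. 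Assembling the three summands gives the three displayed formulas.

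The ``in particular'' identity then follows by forming $\frac{\partial L}{\partial x_r} + i\frac{\partial L}{\partial x_i}$ and using, for each summand, the elementary identity $\Re z + i\Im z = z$: on the $f$-part it reproduces $\frac{\partial f}{\partial x_r} + i\frac{\partial f}{\partial x_i}$, on the $g$-part the real and imaginary pieces $\Re(\frac{\partial g}{\partial x}^*\lambda) + i\Im(\frac{\partial g}{\partial x}^*\lambda)$ recombine into the single term $\frac{\partial g}{\partial x}^*\lambda$, and on the $h$-part linearity of the transpose gives $(\frac{\partial h}{\partial x_r} + i\frac{\partial h}{\partial x_i})^\top\mu$.

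I expect the only delicate point to be the bookkeeping around the two different adjoints in play — the complex conjugate transpose attached to $\frac{\partial g}{\partial x}\in\L(\Xbo,\Ybo)$ versus the real transpose attached to $\frac{\partial h}{\partial x_r}$ — and, relatedly, the use of complex-linearity of $\frac{\partial g}{\partial x}$, so that the imaginary perturbation $i\,\delta x_i$ is carried to $i\,\frac{\partial g}{\partial x}\delta x_i$ before the real part is taken. Once these conventions are fixed, every step reduces to the splitting formula \eqref{xvyfd} and the definition of the (conjugate) transpose, with no analytic difficulty.
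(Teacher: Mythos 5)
Your proof is correct and follows essentially the same route as the paper's: decompose $L$ into the $f$, $g$, and $h$ contributions, handle the first and last trivially, and for the $\Re \la \lambda, g\ra_{\Ybo}$ term pass the derivative through the conjugate transpose and read off the real/imaginary parts via \eqref{xvyfd} (the paper does the same bookkeeping by writing $\partial g/\partial u = a+ib$, $\lambda=\lambda_r+i\lambda_i$ explicitly and using $-\Re(iz)=\Im z$ for the $x_i$-direction). The one delicate point you flag — that the perturbation of $x_i$ enters as $i\,\delta x_i$ before taking real parts — is exactly the step the paper makes explicit, so nothing is missing.
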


\begin{proof}
We have that, skipping arguments:
\be
\ba{lll} \disp
\frac
{\partial L}{\partial u}v
&= \disp
\frac
{\partial f}{\partial u}v
+\Re(\lambda,  \frac{\partial g}{\partial u}v  )_{\Ybo} 
+ ( \mu, \frac{\partial h}{\partial u}v  )_W
\vspace{1mm}
\\&= \disp
\frac {\partial f}{\partial u}v
+\Re(\frac{\partial g}{\partial u}^*  \lambda,  v  )_U
+ ( \frac{\partial h}{\partial u}^\top \mu, v  )_U
\vspace{1mm}
\\&= \disp
\left( \frac{\partial f}{\partial u}
+\Re ( \frac{\partial g}{\partial u}^*  \lambda )
+  \frac{\partial h}{\partial u}^\top \mu, v  \right)_U
\ea\ee
for all $v \in U$.
We have used that setting 
$\frac{\partial g}{\partial u}= a +ib$
and
$\lambda=\lambda_r + i \lambda_i$, 
 then 
\be
\ba{lll} \disp 
\left( \Re \left(\frac{\partial g}{\partial u}^* 
 \lambda \right) , v\right)_U
&=
(\Re ( a^\top-i b^\top) (\lambda_r + i \lambda_i) , v)_U
\\ &=
(a^\top \lambda_r + b^\top  \lambda_i), v)_U
\\ & =
\Re\left(\frac{\partial g}{\partial u}^*  \lambda,  v  \right)_U.
\ea\ee
Now, for $z_r \in X$: 
\be
\ba{lll} \disp
\frac
{\partial L}{\partial x_r}z_r
&= \disp
\frac
{\partial f}{\partial x_r}z_r
+\Re(\lambda,  \frac{\partial g}{\partial x}z_r  )_{\Ybo} 
+ ( \mu, \frac{\partial h}{\partial x_r}z_r  )_W
\\&= \disp
\frac
{\partial f}{\partial x_r}z_r
+\Re(\frac{\partial g}{\partial x}^*  \lambda,  z_r  ) _{\Xbo}
+ ( \frac{\partial h}{\partial x_r}^\top \mu, z_r  )_{\Xbo}
\\&= \disp
\left(\frac{\partial f}{\partial x_r}
+\Re ( \frac{\partial g}{\partial x}^*  \lambda )
+  \frac{\partial h}{\partial x_r}^\top \mu, z_r  \right)_{\Xbo}
\ea\ee
and for all $z_i \in X$:
\be
\ba{lll} \disp
\frac
{\partial L}{\partial x_i}z_i
&= \disp
\frac
{\partial f}{\partial x_i}z_i
+\Re(\lambda,  \frac{\partial g}{\partial x_i}i z_i  )_{\Ybo} 
+ ( \mu, \frac{\partial h}{\partial x_i}z_i  )_W
\\&= \disp
\frac
{\partial f}{\partial x_i}z_i
-
\Re(i\frac{\partial g}{\partial x_i}^*  \lambda,  z_i  ) _{\Xbo}
+ ( \frac{\partial h}{\partial x_i}^\top \mu, z_i  )_{\Xbo}
\\&= \disp
\frac
{\partial f}{\partial x_i}z_i
+
\Im(\frac{\partial g}{\partial x}^*  \lambda,  z_i  ) _{\Xbo}
+ ( \frac{\partial h}{\partial x_i}^\top \mu, z_i  )_{\Xbo}
\\&= \disp
\left(\frac{\partial f}{\partial x_i}
+ \Im(\frac{\partial g}{\partial x}^*  \lambda)
+  \frac{\partial h}{\partial x_i}^\top \mu, z_i  \right)_{\Xbo}.
\ea\ee
The result follows.
\end{proof}

\begin{remark}
Not surprisingly, we obtain the same optimality system as if we had 
represented the constraint $g(u,x)=0$ 
as an element of the product of real spaces. 
The advantage of the complex setting is to allow
more compact formulas.
\end{remark}

\section{The abstract control problem in
a semigroup setting}\label{sec:abset}

Given a complex Banach space $\Hbc$,
 we consider optimal control problems for equations of type
\be
\label{semg1}
\dot\Psi + \cala \Psi = f + u (\B_1 + \B_2\Psi); 
\;\; t\in (0,T); \quad \Psi(0)=\Psi_0,
\ee
where  
\be
\label{semg1-hyp}
\Psi_0\in \Hbc;\;\;  f\in L^1(0,T;\Hbc);\;\; 
\B_1\in \Hbc;\;\; 
u\in L^1(0,T);\;\; 
\B_2\in \L(\Hbc),
\ee
and $\A$ is the generator of a strongly continuous semigroup on 
$\Hbc$, 
in the sense that, denoting by  $e^{-t\A}$ the semigroup 
generated by $\A$, we have that 
\be
\label{def-domA}
\dom(\A) := \left\{ y\in \Hbc; \;\; \lim_{t\downarrow 0} 
\frac{y-e^{-t\A}y}{t} \;\text{exists} \right\}
\ee
is dense 
and for $y\in \dom(\A)$, $\A y$ is equal to the above
limit. Then $\A$ is closed.
Note that we choose to define $\A$ and not 
its opposite as the generator of the semigroup.
We have then 
\be
\label{cond:bounded}
\| e^{-t \A}\|_{\L(\Hbc)} \leq c_\A e^{\lambda_\A t},\quad t>0,
\ee
for some positive $c_\A$ and $\lambda_\A$.
For the semigroup theory in a complex space setting we
refer to Dunford and Schwartz 
\cite[Ch. VIII]{DunSchI58}.
The solution of 
\eqref{semg1} in the {\em semigroup sense}
is the function
$\Psi\in C(0,T;\Hbc)$ such that,
for all $t\in [0,T]$:
\be
\label{semPsi}
\Psi(t) = e^{-t\A}\Psi_0 + 
\int_0^t e^{-(t-s)\A} \big( f(s)+ u(s) (\B_1+ \B_2 \Psi(s)) \big) \dd s.
\ee 
This fixed-point equation \eqref{semPsi} is well-posed
in the sense that it has a unique solution in 
$C(0,T;\Hbc)$, see \cite{aronna:hal-01273496}.
We recall that the conjugate transpose
of $\A$ has domain
\be
\dom(\A^*) := \{ \varphi\in \Hbc^*; \;\; \text{for some $c>0$:}\;\;
| \la \varphi, \A y \ra | \leq c \|y\|, \;\; \text{for all $y\in \dom(\A)$}\},
\ee
with antiduality product $\la \cdot ,\cdot \ra:=\la \cdot ,\cdot \ra_{\Hbc}$. Thus, $y\mapsto \la \varphi, \A y \ra$
has a unique extension to a linear continuous form over $\Hbc$,
which by the definition is $\A^*\varphi$. 
This allows to define weak solutions, extending to the complex
setting the definition in  \cite{Ball:1977}:

\begin{definition}
\label{ThmAC-def}
We say that $\Psi\in C(0,T;\Hbc)$ is a
{\em  weak solution} of \eqref{semg1}
if  $\Psi(0)= \Psi_0$ and, for any 
$\phi \in \dom(\A^*)$,  the function $t\mapsto \la \phi , \Psi(t)\ra$ 
is absolutely continuous over $[0,T]$ and satisfies
\begin{align}
\label{weak_ball}
\ddt\la \phi , \Psi(t)\ra+\la \A^*\phi , \Psi(t)\ra 
= \la \phi , f+u(t)(\B_1+\B_2 \Psi(t))\ra,
\;\; \text{for a.a. $t\in [0,T]$.}
\end{align}
\end{definition}

We recall the following result,  obvious extension to the 
complex setting of the corresponding result in \cite{Ball:1977}:

\begin{theorem}
\label{weaksense-thm}
Let $\A$ be the generator of a strongly continuous semigroup.
Then there is a unique weak solution of \eqref{weak_ball} that coincides with the
semigroup solution.
\end{theorem}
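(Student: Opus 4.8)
The plan is to establish two facts: that the semigroup solution $\Psi$ defined by the fixed-point formula \eqref{semPsi} is a weak solution in the sense of Definition~\ref{ThmAC-def}, and that a weak solution is unique; together with the well-posedness of \eqref{semPsi} already recalled, this yields the statement. Since $\B_2\in\L(\Hbc)$ and $u\in L^1(0,T)$, the map $t\mapsto f(t)+u(t)(\B_1+\B_2\Psi(t))$ lies in $L^1(0,T;\Hbc)$ whenever $\Psi\in C(0,T;\Hbc)$, so I would first prove the equivalence ``weak solution $=$ semigroup solution'' for the linear inhomogeneous equation $\dot w+\A w=g$, $w(0)=w_0$, with a prescribed forcing $g\in L^1(0,T;\Hbc)$, and then specialise to $g(t)=f(t)+u(t)(\B_1+\B_2\Psi(t))$.

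For the consistency direction the key elementary fact is that, for $\phi\in\dom(\A^*)$ and \emph{every} $y\in\Hbc$, the scalar map $t\mapsto\la\phi,e^{-t\A}y\ra$ is absolutely continuous with derivative $-\la\A^*\phi,e^{-t\A}y\ra$. This is clear for $y\in\dom(\A)$, where $\frac{\dd}{\dd t}e^{-t\A}y=-\A e^{-t\A}y$ and the definition of $\A^*$ applies directly, and it extends to all $y\in\Hbc$ by density of $\dom(\A)$ together with the uniform bound \eqref{cond:bounded}. Pairing \eqref{semPsi} with $\phi$, differentiating the free term by this rule and the convolution term by Fubini's theorem, reproduces exactly \eqref{weak_ball}, and absolute continuity of $t\mapsto\la\phi,\Psi(t)\ra$ is obtained along the way.

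The delicate point is uniqueness. Granting the linear equivalence, the difference $w$ of two weak solutions of \eqref{weak_ball} is itself a weak solution with zero initial datum and forcing $g(t)=u(t)\B_2 w(t)$, so that $w(t)=\int_0^t e^{-(t-s)\A}u(s)\B_2 w(s)\,\dd s$; then \eqref{cond:bounded} and Gronwall's lemma give $w\equiv0$. The substance is therefore the uniqueness contained in the linear equivalence, which by linearity amounts to proving that a weak solution of the homogeneous problem with $w(0)=0$ vanishes. For this I would fix $t\in(0,T]$ and test against the backward trajectory $s\mapsto\eta(s):=e^{-(t-s)\A^*}\phi_0$ for $\phi_0\in\dom(\A^*)$, using that $e^{-\tau\A^*}$ maps $\dom(\A^*)$ into itself and commutes with $\A^*$ there, so that $\eta(s)\in\dom(\A^*)$ has weak-$*$ derivative $\A^*\eta(s)$. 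The product rule then gives $\frac{\dd}{\dd s}\la\eta(s),w(s)\ra=\la\A^*\eta(s),w(s)\ra-\la\A^*\eta(s),w(s)\ra=0$, whence $\la\phi_0,w(t)\ra=\la e^{-t\A^*}\phi_0,w(0)\ra=0$; since $\dom(\A^*)$ is weak-$*$ dense in $\Hbc^*$ and thus separates the points of $\Hbc$, we conclude $w(t)=0$.

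I expect the main obstacle to be the rigorous justification of this last product rule, because the adjoint semigroup $e^{-\tau\A^*}$ need not be strongly continuous on $\Hbc^*$: one factor is only weak-$*$ differentiable while the other, $s\mapsto\la\phi,w(s)\ra$, is merely absolutely continuous. I would handle this exactly as in Ball \cite{Ball:1977}, combining the two a.e.-defined derivatives through a regularisation of $w$ (by the smoother quantities $e^{-\delta\A}w$, or via the Yosida approximation of $\A$) and then letting the regularisation parameter tend to $0$. Passing from the real to the complex setting introduces nothing essential: one simply replaces the real duality products of \cite{Ball:1977} by the real parts of the complex antiduality products, as systematised in Section~\ref{sec:opt_cond_complex}. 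Alternatively, and most economically, one may identify $\Hbc$ with the real product $H\times H$, check that $\A$ induces there the generator of a strongly continuous semigroup, and invoke the real-valued theorem of \cite{Ball:1977} verbatim.
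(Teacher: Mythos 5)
Your proposal is correct and matches the paper's approach: the paper gives no argument beyond observing that the result is the ``obvious extension to the complex setting'' of Ball's theorem in \cite{Ball:1977}, which is precisely your closing reduction of $\Hbc$ to $H\times H$ (your preceding paragraphs are a faithful reconstruction of what that citation contains). One small simplification you could note: since the paper assumes $\Hbc$ reflexive (Section \ref{sec:absetdual}), the adjoint semigroup $e^{-t\A^*}$ is automatically strongly continuous, so the regularisation you worry about in the uniqueness step is not needed here.
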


So in the sequel we can use any of the two equivalent formulations 
\eqref{semPsi} or \eqref{weak_ball}.
 The control and state spaces are, respectively,
\be
\calu:=L^1(0,T); \quad \caly:=C(0,T;\Hbc).
\ee
For $s\in [1,\infty]$ we set 
$\calu_s := L^s(0,T)$.
Let $\uh \in \calu$ be given and $\Psih$ solution of \eqref{semg1}.
The
{\em  linearized state equation}
 at $(\Psih,\uh)$,  to be understood in the 
semigroup sense, is
\be
\label{semg5lin}
\dot z(t) + \A z(t) = \uh(t) \B_2 z(t) + v(t) (\B_1+\B_2\Psih(t));
\quad z(0)=0,
\ee
where $v\in \U.$ It is easily checked that 
given $v\in \U,$ the equation \eqref{semg5lin} 
has a unique solution denoted by $z[v]$,
and that the mapping 
$u\mapsto \Psi[u]$ from $\U$ to $\Y$ is of class $C^\infty$,
with 
$D\Psi[u]v= z[v]$.

The results above may allow to prove higher regularity.
\begin{definition}[Restriction property]
\label{def:restr_prop}
Let $E$ be a Banach space, with norm denoted by
$\|\cdot\|_{E}$ with continuous inclusion in $\Hbc$. 
Assume that the restriction of $e^{-t\A}$ to $E$ 
has image in $E$, and that it is 
a continuous semigroup over this space.
We let $\A'$ denote its associated generator,
and $e^{-t\A'}$ the associated semigroup.
By \eqref{def-domA} 
we have that 
\be
\label{def-domAw}
\dom(\A') := \left\{ y\in E; \;\; \lim_{t\downarrow 0} 
\frac{e^{-t\A}y-y}{t} \;\text{exists} \right\}
\ee
so that $\dom (\A') \subset \dom (\A)$,
and $\A'$ is the restriction of $\A$ to 
$\dom(\A')$. We have that 
\be
\label{cond:bounded2}
\| e^{-t \A'}\|_{\L(E)} \leq c_{\A'} e^{\lambda_{\A'} t}
\ee
for some constants 
$c_{\A'}$ and $\lambda_{\A'}$.
Assume that $\B_1 \in E$, 
and denote by $\B'_2$ the restriction of $\B_2$ 
to  $E$, which is supposed to have image in 
$E$ and to be continuous in the topology of $E$,
that is,
\be
\label{lem-reg.l1-1}
\B_1 \in E; \quad \B'_2 \in \L(E). 
\ee
In this case we say that $E$ has the 
{\em restriction property}. 
\end{definition}

\subsection{Dual semigroup}\label{sec:absetdual}
Since $\Hbc$ is a reflexive Banach space it is known, e.g. 
\cite[Ch. 1, Cor. 10.6]{MR710486},  
that
$\A^*$ generates another strongly continuous
semigroup called the {\em dual (backward) semigroup} on $\Hbc^*$,
denoted by $e^{-t \A^*}$, which satisfies 
\be
\label{etatransp}
(e^{-t \A})^* = e^{-t \A^*}. 
\ee
The reference \cite{MR710486} above assumes a real setting, but the arguments
have an immediate extension to the complex one.
Let $(z,p)$ be solution of the
forward-backward system 
\be
\label{yp}
\left\{\ba{rcl}
{\rm (i)} & 
\dot z + \A z &= az + b,
\\ {\rm (ii)} & 
-\dot p + \A^*p &=a^* p + g,
\ea\right.\ee
where 
\be
\label{yp-bis}
\left\{\ba{l}
b\in L^1(0,T;\Hbc), \\
g\in L^1(0,T;\Hbc^*),\\
a \in L^\infty(0,T;\L(\Hbc)),
\ea\right.\ee
and for a.a. $t\in (0,T)$, $a^*(t)$ is the conjugate transpose  operator of $a(t)$,
element of $L^\infty(0,T;\L(\Hbc^*))$.

The solutions of \eqref{yp} in the semigroup sense are 
$z\in C(0,T;\Hbc)$, $p\in C(0,T;\Hbc^*)$, 
and for a.a. $t\in (0,T)$: 
\begin{equation}
\label{pqform6}
\left\{ \begin{aligned}
&{\rm (i)}\,\, & z(t) &= e^{-t \A} z(0) +  \int_0^t e^{-(t-s) \A} (a(s)z(s)+b(s)) \dd s,
\\
&{\rm (ii)} \,\,&  p(t)& = e^{-(T-t) \A^*} p(T) +  \int_t^T e^{-(s-t) \A^*} (a^*(s)p(s)+g(s)) \dd s.
\end{aligned}
\right.
\end{equation}
The following integration by parts (IBP) lemma follows:

\begin{lemma}\label{lem:IBP1}
 \label{pqform}
Let $(z,p) \in C(0,T;\Hbc) \times C(0,T;\Hbc^*) $ satisfy
\eqref{yp}-\eqref{yp-bis}. Then,
\be
\label{pqform0}
\la p(T), z(T)\ra+ \int_0^T \la g(t), z(t)\ra \dd t 
=
\la p(0), z(0)\ra + 
\int_0^T \la p(t), b(t)\ra \dd t.
\ee
\end{lemma}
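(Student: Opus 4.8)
The natural approach is to prove the identity \eqref{pqform0} for the \emph{smooth/regular} case first, where all quantities are genuinely differentiable and one may integrate by parts classically, and then pass to the general $L^1$ setting by a density argument. First I would assume that $z(0)\in\dom(\A)$, $p(T)\in\dom(\A^*)$, and that $b$, $g$ are smooth (say continuous with values in $\dom(\A)$, resp. $\dom(\A^*)$), so that the semigroup solutions $z$ and $p$ from \eqref{pqform6} are in fact classical (strong) solutions of \eqref{yp}, i.e. $z\in C^1(0,T;\Hbc)$ with $z(t)\in\dom(\A)$, and similarly for $p$.

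In this regular case the computation is direct. Consider the scalar function $t\mapsto \la p(t),z(t)\ra$, which is then absolutely continuous, and differentiate using the product rule together with the antilinearity of $\la\cdot,\cdot\ra_{\Hbc}$ in its second argument:
\be
\ddt \la p(t),z(t)\ra = \la \dot p(t),z(t)\ra + \la p(t),\dot z(t)\ra .
\ee
Now substitute the two evolution equations \eqref{yp}(i)--(ii), writing $\dot z = az+b-\A z$ and $\dot p = \A^*p - a^*p - g$. The two terms involving $\A$ cancel against each other by the very definition of the conjugate transpose, namely $\la \A^*p,z\ra = \la p,\A z\ra$; likewise the terms $\la a^*p,z\ra$ and $\la p,az\ra$ cancel. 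What survives is
\be
\ddt \la p(t),z(t)\ra = -\la g(t),z(t)\ra + \la p(t),b(t)\ra .
\ee
Integrating this over $[0,T]$ and rearranging yields exactly \eqref{pqform0}.

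The remaining, and main, step is the passage to the general hypotheses \eqref{yp-bis}, where $b\in L^1(0,T;\Hbc)$, $g\in L^1(0,T;\Hbc^*)$, $z(0)\in\Hbc$, $p(T)\in\Hbc^*$, so that $z$ and $p$ are only semigroup solutions. The plan is to approximate: choose sequences $b_n\to b$ in $L^1(0,T;\Hbc)$, $g_n\to g$ in $L^1(0,T;\Hbc^*)$, and $z_n(0)\to z(0)$, $p_n(T)\to p(T)$ with regularized data for which the classical identity already holds. The key estimate is that the solution map in \eqref{pqform6} is continuous from the data into $C(0,T;\Hbc)$, resp. $C(0,T;\Hbc^*)$: using the bound \eqref{cond:bounded} on the semigroup together with Gronwall's inequality absorbs the bounded perturbation $a\in L^\infty(0,T;\L(\Hbc))$, so that $z_n\to z$ uniformly and $p_n\to p$ uniformly. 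Then every term in \eqref{pqform0} passes to the limit: the boundary terms by uniform convergence, and the two integrals by the $L^1$-convergence of $g_n,b_n$ paired against the uniformly bounded, uniformly convergent $z_n,p_n$. I expect the only delicate point to be verifying that the regularized data can be chosen so that the associated solutions are genuinely \emph{strong} (so that the product-rule differentiation above is legitimate rather than merely formal); this is where one invokes the standard semigroup regularity theory, e.g. that for $z(0)\in\dom(\A)$ and smooth forcing the mild solution is a classical solution. Once that approximation scheme is in place, the identity follows by continuity.
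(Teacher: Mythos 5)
The paper itself does not carry out this proof: it simply invokes the real-space version \cite[Lemma 2.9]{aronna:hal-01273496} and declares the extension to the antiduality pairing obvious, so you are doing more work than the authors. Your smooth-case computation is correct: the product rule for $t\mapsto\la p(t),z(t)\ra$ is legitimate despite the antilinearity in the second slot (the difference quotients involve only real increments $h$), and the cancellations $\la \A^*p,z\ra=\la p,\A z\ra$ and $\la a^*p,z\ra=\la p,az\ra$ are exactly the definition of the conjugate transpose, yielding \eqref{pqform0} upon integration.

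The gap is in the density step, and it is larger than the ``delicate point'' you flag. You propose to regularize $b$, $g$, $z(0)$, $p(T)$ and then invoke the standard fact that a mild solution with smooth forcing and initial datum in $\dom(\A)$ is classical. But the effective forcing in \eqref{yp}(i) is $a(t)z(t)+b(t)$, not $b(t)$ alone: since $a$ is only in $L^\infty(0,T;\L(\Hbc))$ and is not among the data you regularize, the forcing is merely bounded measurable in time no matter how smooth $b$ and $z(0)$ are, and a mild solution with such forcing need not be strong, so the product-rule step is not justified for the approximating problems. Even after mollifying $a$ in time one faces a circularity ($az+b$ is $C^1$ only if $z$ already is), which requires a separate bootstrap. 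Two standard repairs: (i) replace $\A$ by its Yosida approximation $\A_\lambda$, so that the approximate equations are ODEs in $\Hbc$ with $W^{1,1}$ solutions for which the product rule holds a.e., and pass to the limit using the strong convergence of $e^{-t\A_\lambda}$; or (ii) avoid regularization entirely by substituting the Duhamel formulas \eqref{pqform6} (with full sources $az+b$ and $a^*p+g$) into both sides of \eqref{pqform0}, applying Fubini and \eqref{etatransp}; the terms $\int\la p,az\ra$ and $\int\la a^*p,z\ra$ then cancel identically. Route (ii) is the cleanest and needs only the hypotheses \eqref{yp-bis}.
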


\begin{proof}
This is an obvious extension of
 \cite[Lemma 2.9]{aronna:hal-01273496}
to the complex setting.
\end{proof}

\section{First order optimality conditions}\label{sec:first_ord}

\subsection{The optimal control problem}\label{sec:notation}

Let $q$ and $q_T$ be continuous quadratic forms over $\Hbc$,
with associated symmetric and continuous 
operators 
 $Q$ and $Q_T$ in $\L(\Hbc,\Hbc^*)$, such that 
$q(y)=\Re \la Qy,y\ra$ and $q_T(y)=\Re \la Q_Ty,y\ra$,
where the operators $Q$ and $Q_T$ are self-adjoint, i.e., 
\be
\la Q x,y \ra = \overline{\la Q y,x \ra}
\quad
\text{for all $x$, $y$ in $\Hbc$. }
\ee
Observe that the derivative of $q$ at $y$ in direction $x$ is
\be
Dq(y)x = 2 \Re \la Qy,x\ra.
\ee
Similar relations for $q_T$ hold. 

\begin{remark}
The bilinear form associated with the quadratic form $q$ is
\be
\half( q(x+y)-q(x)-q(y)) = \Re \la Qx,y\ra.
\ee
Then 
\be
\Im \la Qx,y\ra = \Re ( -i \la Qx,y\ra) = \Re \la Qx,iy\ra
=
\half( q(x+iy)-q(x)-q(iy)).
\ee
\if{
It follows that
\be
\la Qx,y\ra = \half( q(x+y)-q(x)-q(y)) + \half i ( q(x+iy)-q(x)-q(iy)).
\ee
So, 
\be
\la Qx,iy\ra = \half( q(x+iy)-q(x)-q(iy)) + \half i ( q(x-y)-q(x)-q(-y)).
\ee
} \fi 
\end{remark}

Given
\be
\label{psi_d-psi-dt}
\Psi_d\in L^\infty(0,T;\Hbc); \quad \Psi_{dT}\in\Hbc,
\ee
we introduce the cost function, where
$\alpha_1 \in \RR$ and $\alpha_2 \geq 0$,
assuming that 
$u\in L^2(0,T)$ if $\alpha_2\neq 0$:
\be
\label{def-cost-fun}
J(u, \Psi) := 
\int_0^T  ( \alpha_1  u(t) + \half \alpha_2  u(t)^2)  \dd t 
+
\half\int_0^T q( \Psi(t)-\Psi_d(t) )\dd t 
+
 \half q_T(\Psi(T)-\Psi_{dT})
\ee
The costate equation is
\be
\label{semg8}
- \dot p + \A^* p = Q(\Psi-\Psi_d) + u \B_2^* p; 
\quad
p(T) = Q_T(\Psi(T)-\Psi_{dT} ).
\ee
We take the solution in the (backward)
semigroup sense:
\be
\label{semadj}
p(t) = e^{ (t-T) \A^*} Q_T(\Psi(T)-\Psi_d(T) )
+ \int_t^T e^{ (t-s) \A^*} 
\big( Q(\Psi(s)-\Psi_d(s) ) + u(s) \B_2^* p(s) \big) \dd s.
\ee
The reduced cost is
\be
F(u) := J(u,\Psi[u]). 
\ee
The set of feasible controls is 
\be\label{Uad}
\calu_{ad}:= \{ u\in\calu; \; u_m \leq u(t) \leq u_M \; \text{a.e. on } [0,T] \},
\ee
with $u_m<u_M$ given real  constants. 
The optimal control problem is 
\be\label{problem:OC}
\tag{P}
\Min_u F(u); \quad u\in \calu_{ad}.
\ee
Given 
$(f,y_0)\in L^1(0,T;\Hbc)\times \Hbc$, 
let $y[y_0,f]$ denote
the solution in the semigroup sense of 
\be
\dot y(t) + \A y(t)= f (t),\quad
t\in (0,T), 
\qquad y(0)=y_0. 
\ee
The {\em compactness hypothesis} is
\be
\label{compachyp}
\left\{ \ba{lll}
\text{For given $y_0\in \calh$,
the mapping $f\mapsto \B_2 y [y_0,f]$}
\\
\text{is compact from
$L^2(0,T;\Hbc)$ to $L^2(0,T;\Hbc)$.}
\ea\right.\ee

\begin{theorem}
\label{existsol}
Let \eqref{compachyp} hold.  
Then problem \eqref{problem:OC} has a nonempty set of solutions. 
\end{theorem}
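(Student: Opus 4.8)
The plan is to run the direct method of the calculus of variations on the reduced problem \eqref{problem:OC}. First I would record that $\calu_{ad}$ is bounded in $L^\infty(0,T)$ (as $u_m\le u\le u_M$) and that weak-$*$ limits of its elements remain in $\calu_{ad}$, the pointwise bounds passing to the limit when tested against nonnegative $L^1$ functions; by Banach--Alaoglu, bounded sequences in $\calu_{ad}$ thus admit weakly-$*$ convergent subsequences with limit still in $\calu_{ad}$. A Gronwall estimate for \eqref{semPsi} based on \eqref{cond:bounded} bounds $\{\Psi[u]:u\in\calu_{ad}\}$ in $\caly$, so that, using \eqref{psi_d-psi-dt}, the reduced cost $F$ is bounded below on $\calu_{ad}$. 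Hence $m:=\inf_{\calu_{ad}}F$ is finite; I would take a minimizing sequence $(u_n)\subset\calu_{ad}$ with $F(u_n)\to m$ and extract $u_n\rightharpoonup\bar u$ weakly-$*$ in $L^\infty(0,T)$, hence weakly in $L^2(0,T)$ since $(0,T)$ has finite measure, with $\bar u\in\calu_{ad}$.

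The core step is to show that the states converge. Writing $\Psi_n:=\Psi[u_n]$ and $\bar\Psi:=\Psi[\bar u]$ (the latter well defined since \eqref{semPsi} is well posed), the obstacle is the bilinear term $u_n\B_2\Psi_n$ in \eqref{semPsi}: it is the product of the two only weakly convergent sequences $u_n$ and $\Psi_n$, and weak convergence of the factors does not pass to the product. This is precisely the role of the compactness hypothesis \eqref{compachyp}. Since $\Psi_n=y[\Psi_0,f+u_n\B_1+u_n\B_2\Psi_n]$ with a source bounded in $L^2(0,T;\Hbc)$ uniformly in $n$, \eqref{compachyp} gives that $\B_2\Psi_n$ lies in a relatively compact subset of $L^2(0,T;\Hbc)$; hence, along a subsequence, $\B_2\Psi_n$ converges strongly. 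Pairing the weak limit of $u_n$ with this strong limit lets me pass to the limit in $u_n\B_2\Psi_n$, while $u_n\B_1\rightharpoonup\bar u\B_1$ is linear in $u_n$ and the convolution $g\mapsto\int_0^{\cdot}e^{-(\cdot-s)\A}g(s)\,\dd s$ is a bounded, hence weakly continuous, linear operator. Passing to the limit in \eqref{semPsi} and identifying the strong limit of $\B_2\Psi_n$ with $\B_2\bar\Psi$, uniqueness for \eqref{semPsi} forces the limit to be $\bar\Psi$.

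To obtain strong convergence in $\caly$ (which avoids any convexity assumption on $q$, $q_T$), I would close the argument with a Gronwall estimate on $\|\Psi_n(\cdot)-\bar\Psi(\cdot)\|$. Decomposing $\Psi_n-\bar\Psi$, the bilinear remainder is bounded by $C\int_0^{\cdot}\|\Psi_n-\bar\Psi\|\,\dd s$ using \eqref{cond:bounded} and $|u_n|\le\max(|u_m|,|u_M|)$, whereas the linear contributions, of the form $\int_0^{\cdot}e^{-(\cdot-s)\A}(u_n-\bar u)(s)\,\zeta(s)\,\dd s$ with $\zeta\in\{\B_1,\B_2\bar\Psi\}$, tend to zero strongly: their integral kernels take values in the compact semigroup orbit of $\B_1$ and of the continuous curve $\B_2\bar\Psi$, so the associated operators are compact and send the weakly null sequence $u_n-\bar u$ to strongly null images. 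Gronwall then yields $\Psi_n\to\bar\Psi$ in $C(0,T;\Hbc)$.

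Finally I would pass to the limit in the cost \eqref{def-cost-fun}. The term $\int_0^T\alpha_1 u_n\,\dd t$ is weakly continuous, and $\tfrac12\alpha_2\int_0^T u_n^2\,\dd t$, convex and continuous on $L^2(0,T)$ with $\alpha_2\ge0$, is weakly lower semicontinuous; by $\Psi_n\to\bar\Psi$ in $\caly$ and continuity of $q$, $q_T$, the tracking terms converge to their values at $\bar\Psi$. Therefore $F(\bar u)=J(\bar u,\bar\Psi)\le\liminf_n F(u_n)=m$, and since $\bar u\in\calu_{ad}$ gives $F(\bar u)\ge m$, I conclude $F(\bar u)=m$, so the solution set is nonempty. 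The hardest point is the state-convergence step: the compactness hypothesis \eqref{compachyp} is exactly what upgrades the weak convergence of $\B_2\Psi_n$ to strong convergence and thereby allows the bilinear coupling to pass to the limit.
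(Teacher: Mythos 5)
Your proof is correct and takes essentially the route the paper intends: its own proof is a one-line reference to the companion paper, where existence is obtained by exactly the direct method you carry out (minimizing sequence in $\calu_{ad}$, weak-$*$ extraction, passage to the limit in the bilinear term $u_n\B_2\Psi_n$ via the compactness hypothesis \eqref{compachyp}, strong state convergence by Gronwall, and lower semicontinuity of the cost). Two small points to polish: since $f$ is only assumed to lie in $L^1(0,T;\Hbc)$, you should split it off by linearity and apply \eqref{compachyp} to the remaining $L^2$-bounded part of the source $u_n\B_1+u_n\B_2\Psi_n$; and the compactness of the maps $v\mapsto\int_0^t e^{-(t-s)\A}v(s)\,\zeta(s)\,\dd s$ from $L^2(0,T)$ to $\Hbc$ is most cleanly justified by approximating the continuous kernel $s\mapsto e^{-(t-s)\A}\zeta(s)$ by simple functions (giving a norm limit of finite-rank operators) rather than by appealing to compactness of the kernel's range.
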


\begin{proof}
Similar to \cite[Th. 2.15]{aronna:hal-01273496}.
\end{proof}

We set
\be
\label{expr-lambda}
\Lambda(t):=\alpha_1+\alpha_2 \uh(t)  + \Re \la p(t),\B_1+\B_2\Psih(t) \ra.
\ee

\begin{theorem}
\label{diffcostfct}
The mapping $u\mapsto F(u)$ is of class 
$C^\infty$ from $\U$ to $\cR$ and we have that 
\be
\label{diffcostfct1}
DF(u)v=\int_0^T \Lambda(t)  v(t) \dd t,\qquad \text{for all } v\in \U.
\ee
\end{theorem}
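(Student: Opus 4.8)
The plan is to use that $F$ is the composition of the $C^\infty$ control-to-state map with a smooth cost, to compute $DF$ by the chain rule in terms of the linearized state $z[v]$, and then to eliminate $z[v]$ by pairing with the costate through the integration-by-parts Lemma~\ref{lem:IBP1}. Fix the point of differentiation $\uh \in \U$ (written $u$ in the statement), set $\Psih := \Psi[\uh]$, and let $p$ be the associated costate, i.e. the backward semigroup solution \eqref{semadj} of \eqref{semg8}.

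For smoothness: the control term $\uh \mapsto \int_0^T(\alpha_1\uh + \tfrac12\alpha_2 \uh^2)\,\dd t$ is affine-plus-quadratic, the quadratic forms $q$ and $q_T$ are continuous quadratic forms on $\Hbc$ and hence $C^\infty$, and $\uh \mapsto \Psih = \Psi[\uh]$ is $C^\infty$ from $\U$ to $\Y$ with $D\Psi[\uh]v = z[v]$, as recalled after \eqref{semg5lin}. Writing $z := z[v]$ and differentiating, using $Dq(y)x = 2\Re\la Qy,x\ra$ and the analogous identity for $q_T$, gives
\[ DF(\uh)v = \int_0^T (\alpha_1+\alpha_2 \uh)v\,\dd t + \int_0^T \Re\la Q(\Psih-\Psi_d), z\ra\,\dd t + \Re\la Q_T(\Psih(T)-\Psi_{dT}), z(T)\ra. \]

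To remove $z$, observe that $(z,p)$ solves the forward-backward system \eqref{yp}-\eqref{yp-bis} with $a = \uh\B_2$, $b = v(\B_1+\B_2\Psih)$ and $g = Q(\Psih-\Psi_d)$, together with $z(0)=0$ and $p(T)=Q_T(\Psih(T)-\Psi_{dT})$; note $a^*(t) = \uh(t)\B_2^*$ since $\uh$ is real. Lemma~\ref{lem:IBP1} then gives, after dropping the term $\la p(0),z(0)\ra = 0$ and factoring the real scalar $v$ out of the antiduality product,
\[ \la Q_T(\Psih(T)-\Psi_{dT}), z(T)\ra + \int_0^T \la Q(\Psih-\Psi_d), z\ra\,\dd t = \int_0^T v(t)\,\la p(t), \B_1+\B_2\Psih(t)\ra\,\dd t. \]
Taking real parts and substituting into the expression for $DF(\uh)v$ collapses the two state-sensitivity terms, yielding $DF(\uh)v = \int_0^T(\alpha_1+\alpha_2\uh)v\,\dd t + \int_0^T v\,\Re\la p, \B_1+\B_2\Psih\ra\,\dd t = \int_0^T \Lambda(t)v(t)\,\dd t$ with $\Lambda$ as in \eqref{expr-lambda}.

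The step to watch is the invocation of Lemma~\ref{lem:IBP1}: one must verify the integrability hypotheses \eqref{yp-bis}. That $b \in L^1(0,T;\Hbc)$ and $g \in L^1(0,T;\Hbc^*)$ is immediate from $\Psih \in C(0,T;\Hbc)$, $\Psi_d \in L^\infty$, $v \in \U$, and $Q \in \L(\Hbc,\Hbc^*)$; the boundedness of $a = \uh\B_2$ is the delicate point, and holds for $\uh \in \Uad \subset L^\infty(0,T)$, which is the case relevant for the optimality analysis. The other genuinely complex feature is that the antiduality product is antilinear in its second argument, so it is essential that $\uh$ and $v$ be real-valued in order to factor them out of the pairings and to commute them with $\Re$; this is exactly what makes the complex computation reduce to the real-valued formula \eqref{diffcostfct1}.
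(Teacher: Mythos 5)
Your proof follows the same route as the paper's: smoothness of $u\mapsto\Psi[u]$ plus the chain rule gives $DF(\uh)v$ in terms of $z=z[v]$, and Lemma~\ref{lem:IBP1} applied with $a=\uh\B_2$, $b=v(\B_1+\B_2\Psih)$, $g=Q(\Psih-\Psi_d)$, $z(0)=0$ and $p(T)=Q_T(\Psih(T)-\Psi_{dT})$ eliminates $z$ and yields \eqref{diffcostfct1}. Your added observation that the hypothesis $a\in L^\infty(0,T;\L(\Hbc))$ in \eqref{yp-bis} requires $\uh\in L^\infty(0,T)$ rather than merely $\uh\in\calu=L^1(0,T)$ is a fair point that the paper's one-line proof glosses over, but it does not alter the argument in the setting where the result is used.
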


\begin{proof}
That $F(u)$ and $J$ are of class $C^\infty$
follows from classical arguments based on the implicit
function theorem, as in \cite{aronna:hal-01273496}.
This also implies
that, setting $\Psi:=\Psi[u]$ and $z:=z[u]$: 
\be\ba{lll}
DF(u)v = & \disp
\int_0^T (\alpha_1 + \alpha_2 u(t) ) v(t) )\dd t
+
\int_0^T \Re \la Q( \Psi(t)-\Psi_d(t)), z(t) \ra \dd t 
\vspace{1mm} \\ & +
\Re \la Q_T( \Psi(T)-\Psi_{dT}), z(T) \ra.
\ea\ee
We deduce then \eqref{diffcostfct1} from lemma \ref{pqform}.
\end{proof}
Let for $u \in \Uad$ and $I_{m}(u)$ and $I_{M}(u)$ be the associated contact sets defined, up to a zero-measure set, as
\be
\left\{ 
\begin{aligned}
  I_{m}(u) &:= \{t\in (0,T) : u(t) = u_m\},
\\
   I_{M}(u) &:= \{t\in (0,T) : u(t) = u_M \}.
\end{aligned}
\right.
\ee
The first order optimality necessary condition is given as follows.
\begin{proposition}
\label{Prop1order}
 Let $\uh$ be a local solution of problem \eqref{problem:OC}.
Then, up to a set of measure zero there holds
 \be\label{first_ord_nec}
 \{t;\; \Lambda(t)>0\}\subset I_m(\uh),\quad \{t;\; \Lambda(t)<0\}\subset I_M(\uh).
 \ee
\end{proposition}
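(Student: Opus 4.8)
The plan is to derive the first order necessary condition from the stationarity of the reduced cost $F$ over the admissible set $\calu_{ad}$, using the derivative formula \eqref{diffcostfct1} established in Theorem \ref{diffcostfct}. Since $\uh$ is a local solution and $\calu_{ad}$ is convex, the standard variational inequality holds: for every feasible $u\in\calu_{ad}$ we have $DF(\uh)(u-\uh)\geq 0$. Writing this out via \eqref{diffcostfct1} gives
\be
\int_0^T \Lambda(t)\,(u(t)-\uh(t))\,\dd t \geq 0 \qquad \text{for all } u\in\calu_{ad}.
\ee
The task is then to convert this integral inequality into the pointwise statement \eqref{first_ord_nec}.

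The key step is a localization (bang-bang type) argument. First I would argue by contradiction, or directly via a needle-type perturbation: suppose the set $S_+:=\{t\in(0,T):\Lambda(t)>0,\ \uh(t)>u_m\}$ has positive measure. On such a set one can decrease $\uh$ towards $u_m$ on a small subset while staying in $\calu_{ad}$, producing a feasible variation $u-\uh\leq 0$ supported where $\Lambda>0$, hence $\int_0^T\Lambda(u-\uh)\,\dd t<0$, contradicting the variational inequality. Symmetrically, if $S_-:=\{t:\Lambda(t)<0,\ \uh(t)<u_M\}$ has positive measure, increasing $\uh$ towards $u_M$ there yields a feasible variation with $\int_0^T\Lambda(u-\uh)\,\dd t<0$, again a contradiction. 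Therefore both $S_+$ and $S_-$ are null, which is exactly the inclusions $\{t:\Lambda(t)>0\}\subset I_m(\uh)$ and $\{t:\Lambda(t)<0\}\subset I_M(\uh)$ up to a set of measure zero.

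More concretely, to build the admissible variations rigorously I would fix $\varepsilon>0$, set $S_+^\varepsilon:=\{t:\Lambda(t)\geq\varepsilon,\ \uh(t)\geq u_m+\varepsilon\}$, and take $u:=\uh-\varepsilon\,\1B_{S_+^\varepsilon}$, which lies in $\calu_{ad}$ and gives $\int_0^T\Lambda(u-\uh)\,\dd t\leq-\varepsilon^2\,\meas(S_+^\varepsilon)$; the variational inequality forces $\meas(S_+^\varepsilon)=0$, and letting $\varepsilon\downarrow 0$ through a countable sequence shows $\meas(S_+)=0$. The symmetric construction with $u:=\uh+\varepsilon\,\1B_{S_-^\varepsilon}$ handles $S_-$.

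The main obstacle is purely measure-theoretic bookkeeping rather than anything deep: one must ensure the perturbed controls remain in $L^\infty$ and satisfy the box constraints $u_m\leq u\leq u_M$ almost everywhere, and that the contact sets are well-defined only up to null sets, so all statements are understood modulo measure zero. Since $\Lambda\in L^\infty(0,T)$ (it is continuous in $t$ because $p$ and $\Psih$ lie in $C(0,T;\Hbc^*)$ and $C(0,T;\Hbc)$ respectively, and $\B_1,\B_2$ are bounded), the integrals are all finite and the exhaustion by the sets $S_\pm^\varepsilon$ is legitimate. No further regularity or compactness is needed beyond Theorem \ref{diffcostfct}.
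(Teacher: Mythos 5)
Your proof is correct and follows essentially the same route the paper intends: the paper itself only cites \cite[Proposition 2.17]{aronna:hal-01273496}, which is precisely this standard argument (convexity of $\calu_{ad}$, the variational inequality $DF(\uh)(u-\uh)\geq 0$ combined with \eqref{diffcostfct1}, and the localization via the truncated sets $S_\pm^\varepsilon$). One cosmetic remark: your parenthetical claim that $\Lambda$ is continuous is not needed and is only guaranteed when $\alpha_2=0$ (the term $\alpha_2\uh(t)$ is merely $L^\infty$ in general), but since your argument only uses $\Lambda\in L^\infty(0,T)$ this does not affect the proof.
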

\begin{proof}
 Same proof as in  \cite[Proposition 2.17]{aronna:hal-01273496}.
\end{proof}

\section{Second order optimality conditions}\label{sec:soocg}
\subsection{Technical results}

Set $\delta \Psi := \Psi-\hat\Psi.$ 
Since $u \Psi-\uh\Psih= u \delta\Psi + v \Psih$,
we have, in the semigroup sense:
\be
  \label{semdeltaPsi}
  \ddt \delta \Psi(t)+ \A \delta \Psi(t) = 
\uh(s) \B_2 \delta\Psi(s) + v(t) (\B_1 + \B_2 \Psih(t) + \B_2 \delta\Psi(s)).
\ee
Thus, $\eta := \delta\Psi-z$ is solution of 
\be
  \label{sem-eta}
  \dot \eta(t) + \A  \eta(t) = \uh \calb_2 \eta(t) + v(s)\B_2 \delta\Psi(s).
\ee

We get the following estimates.
\begin{lemma}
\label{Lemmaestz}
The linearized state $z$ solution of \eqref{semg5lin},
the solution $\delta \Psi$ of \eqref{semdeltaPsi}, and 
$\eta = \delta\Psi-z$ solution of \eqref{sem-eta} satisfy,
whenever $v$ remains in a bounded set of $L^1(0,T)$:
\begin{eqnarray}
\label{estz}
\|z\|_{L^\infty(0,T;\Hbc)} &=& O( \|v\|_1),
\\
\label{estdeltaPsi}
\|\delta\Psi\|_{L^\infty(0,T;\Hbc)} &=& O( \|v\|_1),
\\
\label{esteta}
\|\eta\|_{L^\infty(0,T;\Hbc)} &=& O(\| \delta\Psi\,v\|_{L^1(0,T;\Hbc)}) = O(\|v\|_1^2).
\end{eqnarray}
\end{lemma}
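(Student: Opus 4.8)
The plan is to rewrite each of the three equations in its equivalent mild (semigroup) form, estimate the resulting Volterra integral using the uniform bound $C := \sup_{0 \le t \le T}\|e^{-t\A}\|_{\L(\Hbc)} < \infty$ coming from \eqref{cond:bounded}, and then close each inequality with Gronwall's lemma. Throughout I use that $\uh \in \Uad \subset L^\infty(0,T)$ and that $\Psih \in C(0,T;\Hbc)$ is bounded, so that $M := \|\B_1 + \B_2\Psih\|_{L^\infty(0,T;\Hbc)}$ is finite, and that all three equations have zero initial data. Starting from the mild form of \eqref{semg5lin}, namely $z(t) = \int_0^t e^{-(t-s)\A}(\uh(s)\B_2 z(s) + v(s)(\B_1 + \B_2\Psih(s)))\,\dd s$, taking norms gives
\[
\|z(t)\|_{\Hbc} \leq C\|\B_2\|_{\L(\Hbc)}\int_0^t |\uh(s)|\,\|z(s)\|_{\Hbc}\,\dd s + CM\|v\|_1.
\]
Since $\int_0^T |\uh(s)|\,\dd s \leq T\|\uh\|_{L^\infty}$, Gronwall's lemma yields $\|z\|_{L^\infty(0,T;\Hbc)} \leq C'\|v\|_1$, which is \eqref{estz}.

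For $\delta\Psi$, the mild form of \eqref{semdeltaPsi} carries the additional product term $v\B_2\delta\Psi$, so the same manipulation produces
\[
\|\delta\Psi(t)\|_{\Hbc} \leq C\|\B_2\|_{\L(\Hbc)}\int_0^t \big(|\uh(s)| + |v(s)|\big)\,\|\delta\Psi(s)\|_{\Hbc}\,\dd s + CM\|v\|_1.
\]
Now the Gronwall coefficient is $|\uh| + |v|$, whose integral over $(0,T)$ is bounded by $T\|\uh\|_{L^\infty} + \|v\|_1$; because $v$ ranges over a bounded subset of $L^1(0,T)$ this stays uniformly bounded, so Gronwall again gives \eqref{estdeltaPsi}.

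Finally $\eta = \delta\Psi - z$ solves \eqref{sem-eta}, whose mild form $\eta(t) = \int_0^t e^{-(t-s)\A}(\uh(s)\B_2\eta(s) + v(s)\B_2\delta\Psi(s))\,\dd s$ now has a quadratically small forcing term. Taking norms,
\[
\|\eta(t)\|_{\Hbc} \leq C\|\B_2\|_{\L(\Hbc)}\int_0^t |\uh(s)|\,\|\eta(s)\|_{\Hbc}\,\dd s + C\|\B_2\|_{\L(\Hbc)}\,\|\delta\Psi\,v\|_{L^1(0,T;\Hbc)},
\]
and Gronwall gives $\|\eta\|_{L^\infty(0,T;\Hbc)} = O(\|\delta\Psi\,v\|_{L^1(0,T;\Hbc)})$. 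Bounding $\|\delta\Psi\,v\|_{L^1(0,T;\Hbc)} = \int_0^T |v(s)|\,\|\delta\Psi(s)\|_{\Hbc}\,\dd s \leq \|\delta\Psi\|_{L^\infty(0,T;\Hbc)}\|v\|_1$ and inserting \eqref{estdeltaPsi} gives the $O(\|v\|_1^2)$ estimate, establishing \eqref{esteta}.

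The whole argument is a threefold application of Gronwall's lemma, so no single step is a genuine obstacle; the only point demanding care is the $\delta\Psi$ estimate, where the coefficient in the Gronwall inequality contains $|v|$ rather than only the fixed $L^\infty$ control $\uh$. This is precisely why the hypothesis that $v$ remains in a bounded set of $L^1(0,T)$ is needed, to keep the exponential Gronwall factor controlled. The $\eta$ bound is then obtained by a bootstrap, feeding the already-proved estimate \eqref{estdeltaPsi} into its quadratic forcing term.
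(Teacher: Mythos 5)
Your proof is correct and follows exactly the argument the paper invokes: the paper's own ``proof'' is a one-line reference to Lemma 2.18 of the companion paper \cite{aronna:hal-01273496}, which is precisely this Gronwall estimate on the mild (fixed-point) formulation, here transposed verbatim to the complex setting. The one point you rightly flag --- that the Gronwall coefficient for $\delta\Psi$ contains $|v|$ and is controlled only because $v$ stays in a bounded set of $L^1(0,T)$ --- is indeed the only delicate step, and you handle it correctly.
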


\begin{proof}
Similar to the proof of lemma 2.18 in 
\cite{aronna:hal-01273496}. 
\end{proof}

For $(\Psih,\uh)$ solution of \eqref{semg1},  
$\ph$ the corresponding solution of \eqref{semadj}, 
$v\in L^1(0,T)$, and $z\in C(0,T;\Hbc)$, 
let us set
\be
  \label{tildeQ}
  \Q(z,v) := \int_0^T \Big( q (z(t) ) + \alpha_2 v(t)^2 
 + 2  v(t) \Re \la \ph(t),\B_2  z(t) \ra  \Big)\dd t + q_T (z(T) ).
  \ee

\begin{proposition}\label{ExpEasy}
  Let $u$ belong to $\calu$. 
Set $v:=u-\uh$, 
  $\Psih := \Psi [\uh]$, $\Psi := \Psi [u]$.
Then
  \be
  F(u) = F(\uh) + D F(\uh)v + \half  \Q(\delta\Psi,v).
  \ee
  
\end{proposition}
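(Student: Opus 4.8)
The plan is to compute the exact (not merely second-order Taylor) expansion of the cost $F(u)=J(u,\Psi[u])$ around $\uh$ by exploiting the special algebraic structure of the problem: the cost $J$ is purely quadratic in $\Psi$ and quadratic in $u$, while the state enters $J$ through the two quadratic forms $q$ and $q_T$. Because $q$ and $q_T$ are exactly quadratic, their Taylor expansions around $\hat\Psi$ terminate at second order with no remainder. So I would write $\Psi=\Psih+\delta\Psi$ and expand each term of $J$ exactly, collecting the constant term (which is $F(\uh)$), the first-order term, and the second-order term.

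\emph{First} I would treat the control-dependent part $\int_0^T(\alpha_1 u+\tfrac12\alpha_2 u^2)\,\dd t$. Writing $u=\uh+v$, this equals $\int_0^T(\alpha_1\uh+\tfrac12\alpha_2\uh^2)\,\dd t + \int_0^T(\alpha_1+\alpha_2\uh)v\,\dd t + \tfrac12\alpha_2\int_0^T v^2\,\dd t$, so the quadratic-in-$v$ piece $\tfrac12\alpha_2\int_0^T v^2\,\dd t$ is precisely the $\alpha_2 v^2$ contribution appearing in $\Q$. \emph{Next} I would expand the two state-tracking terms. Using $q(\Psi-\Psi_d)=q((\Psih-\Psi_d)+\delta\Psi)$ and the exact quadratic identity (the bilinear form being $\Re\la Q\cdot,\cdot\ra$ by the Remark), I get $q(\Psih-\Psi_d)+2\Re\la Q(\Psih-\Psi_d),\delta\Psi\ra+q(\delta\Psi)$, and similarly for $q_T$ at the final time. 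The constant parts reassemble into $F(\uh)$, the $q(\delta\Psi)$ and $q_T(\delta\Psi(T))$ parts supply the pure quadratic-state contribution to $\Q$, and the cross terms $2\Re\la Q(\Psih-\Psi_d),\delta\Psi\ra$ together with $\int(\alpha_1+\alpha_2\uh)v\,\dd t$ must be shown to equal $DF(\uh)v + \int_0^T 2v\,\Re\la\ph,\B_2\delta\Psi\ra\,\dd t$.

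\emph{The main obstacle} is the handling of this cross (first-order-in-state) term, where the key move is the integration-by-parts Lemma~\ref{pqform}. The linear-in-$\delta\Psi$ terms $\int_0^T 2\Re\la Q(\Psih-\Psi_d),\delta\Psi\ra\,\dd t + 2\Re\la Q_T(\Psih-\Psi_{dT}),\delta\Psi(T)\ra$ are exactly of the form appearing in the proof of Theorem~\ref{diffcostfct}, except that there $z$ appears in place of $\delta\Psi$. I would apply Lemma~\ref{pqform} to the pair $(\delta\Psi,\ph)$, where $\delta\Psi$ solves \eqref{semdeltaPsi} and $\ph$ solves the costate equation \eqref{semadj}; identifying the data in \eqref{yp}--\eqref{yp-bis} as $a=\uh\B_2$, $b=v(\B_1+\B_2\Psih+\B_2\delta\Psi)$, $g=Q(\Psih-\Psi_d)$, and terminal condition $p(T)=Q_T(\Psih-\Psi_{dT})$, the IBP formula \eqref{pqform0} converts the state cross terms into $\int_0^T\Re\la\ph,v(\B_1+\B_2\Psih+\B_2\delta\Psi)\ra\,\dd t$. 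The piece involving $\B_1+\B_2\Psih$ combines with $\int(\alpha_1+\alpha_2\uh)v$ to rebuild $DF(\uh)v=\int_0^T\Lambda v\,\dd t$ via \eqref{expr-lambda} and \eqref{diffcostfct1}, while the leftover piece $\int_0^T 2v\,\Re\la\ph,\B_2\delta\Psi\ra\,\dd t$ (after accounting for the factor of $\tfrac12$ multiplying the state integrals in $J$) is precisely the remaining term in $\Q(\delta\Psi,v)$. Care with the real-part operations and the factor $\tfrac12$ in front of each state integral is the only delicate bookkeeping; once the IBP is invoked correctly, the three groups of terms assemble exactly into $F(\uh)+DF(\uh)v+\tfrac12\Q(\delta\Psi,v)$.
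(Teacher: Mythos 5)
Your proposal is correct and follows essentially the same route as the paper: expand the purely quadratic cost exactly around $(\uh,\Psih)$, then apply the integration-by-parts Lemma~\ref{pqform} to the pair $(\delta\Psi,\ph)$ with $a=\uh\B_2$, $b=v(\B_1+\B_2\Psih+\B_2\delta\Psi)$, $g=Q(\Psih-\Psi_d)$, and recombine via $\Lambda$ to recover $DF(\uh)v$ plus the cross term of $\Q(\delta\Psi,v)$. This matches the paper's proof step for step, including the key observation that the expansion is exact because $q$, $q_T$ and the control cost are quadratic.
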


\begin{proof}
 We can expand the cost function as follows:
\be
\label{semcost}
  \begin{split}    
F(u)= & \disp 
F(\uh) 
+ \half \int_0^T \alpha_2 v(t)^2 +
q(\delta\Psi(t)) )\dd t
+ \half q_T(\delta\Psi(T) ) 
\\ & \disp 
+ \int_0^T  (\alpha_1+\alpha_2 \uh(t)) v(t) \dd t 
\\ & \disp 
+ \Re \left( \int_0^T 
\la Q( \hat \Psi(t) -\Psi_d(t)), \delta \Psi) \ra \dd t  
+ \la Q_T( \hat\Psi(T) -\Psi_d(T)),\delta \Psi(T) \ra \right).
  \end{split}
\ee
Applying lemma \ref{lem:IBP1}
to the pair $(\delta\Psi,\ph)$, where $z$ is solution of the 
linearized equation \eqref{semg5lin},
and using the expression of $\Lambda$ in 
\eqref{expr-lambda}, we obtain the result.
\end{proof}

\begin{corollary}\label{cor:expansion2}
We have that
\be
F(u) = F(\uh) + D F(\uh)v + \half  \Q(z,v) + O(\|v\|^3_1),
\ee 
where $z := z[v].$
\end{corollary}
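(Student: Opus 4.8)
The plan is to start from the exact identity in Proposition \ref{ExpEasy}, namely $F(u) = F(\uh) + DF(\uh)v + \half\Q(\delta\Psi,v)$, and to replace $\delta\Psi$ by the linearized state $z=z[v]$ inside the quadratic form $\Q$, showing that the resulting error is $O(\|v\|_1^3)$. Recall from \eqref{tildeQ} that $\Q(\cdot,v)$ is the sum of a part quadratic in its first argument (the terms $\int_0^T q(\cdot)\dd t$ and $q_T(\cdot(T))$), a part independent of it ($\int_0^T \alpha_2 v^2\dd t$), and a part linear in it ($\int_0^T 2v\,\Re\la\ph,\B_2\cdot\ra\dd t$). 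Hence $\Q(\delta\Psi,v)-\Q(z,v)$ splits accordingly, and writing $\delta\Psi = z+\eta$ with $\eta$ solving \eqref{sem-eta}, it remains to bound each piece using Lemma \ref{Lemmaestz}.

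For the quadratic parts I would use the polarization identity recorded in the remark following the definition of $q$, which gives $q(z+\eta)-q(z) = 2\Re\la Qz,\eta\ra + q(\eta)$, and analogously for $q_T$ at the endpoint $T$. Continuity of $Q$ and $Q_T$ together with the estimates $\|z\|_{L^\infty(0,T;\Hbc)}=O(\|v\|_1)$ and $\|\eta\|_{L^\infty(0,T;\Hbc)}=O(\|v\|_1^2)$ then yield $\int_0^T 2\Re\la Qz(t),\eta(t)\ra\dd t = O(\|v\|_1\cdot\|v\|_1^2)=O(\|v\|_1^3)$, while the remainder $\int_0^T q(\eta(t))\dd t = O(\|v\|_1^4)$; the two endpoint contributions from $q_T$ are handled identically.

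For the linear part I would write $\int_0^T 2v\,\Re\la\ph,\B_2(\delta\Psi-z)\ra\dd t = \int_0^T 2v\,\Re\la\ph,\B_2\eta\ra\dd t$ and estimate it by $2\|\ph\|_{L^\infty(0,T;\Hbc^*)}\,\|\B_2\|_{\L(\Hbc)}\,\|\eta\|_{L^\infty(0,T;\Hbc)}\,\|v\|_1 = O(\|v\|_1^2\cdot\|v\|_1)=O(\|v\|_1^3)$. Summing the three contributions gives $\Q(\delta\Psi,v)-\Q(z,v)=O(\|v\|_1^3)$, where the $O(\|v\|_1^4)$ terms are absorbed since $v$ stays in a bounded set, and substituting into Proposition \ref{ExpEasy} yields the claim.

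There is no deep obstacle here; the one point requiring care is the estimate of the linear-in-$z$ term of $\Q$, where one must pair $|v|$ in $L^1$ against the $L^\infty$ bound on $\eta$ (integrating $\int_0^T|v|\dd t=\|v\|_1$) rather than attempting a pointwise bound on $v$, so that the single factor $\|v\|_1$ combines with the quadratic smallness of $\eta$ to produce exactly the cubic order. Everything else is a direct application of Lemma \ref{Lemmaestz} and the continuity of the data operators.
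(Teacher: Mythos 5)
Your proposal is correct and follows essentially the same route as the paper: both start from Proposition \ref{ExpEasy} and bound $\Q(\delta\Psi,v)-\Q(z,v)$ via Lemma \ref{Lemmaestz}; the paper merely writes the quadratic difference in the factored form $\Re\la Q(\delta\Psi+z),\eta\ra$ rather than your expansion $2\Re\la Qz,\eta\ra+q(\eta)$, which is the same identity since $\delta\Psi+z=2z+\eta$. The treatment of the linear-in-$z$ term, pairing $\|v\|_1$ against the $L^\infty$ bound $\|\eta\|=O(\|v\|_1^2)$, is exactly the paper's argument.
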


\begin{proof}
We have that
\be
\ba{lll}
\Q(\delta\Psi,v) -\Q(z,v) &=&  \disp
\Re\left( \int_0^T
\la Q(\delta \Psi(t) +z(t)),\eta(t)\ra 
+ 2 v(t) \la p(t), B_2 \eta(t) \ra \dd t \right)\\ &&+
\Re\left(
\la Q_T (\delta \Psi(T)+z(T)),\eta(T)\ra \right).
\ea
\ee
By \eqref{estz}-\eqref{esteta} 
this is of order of $\|v\|^3_1$.
The conclusion follows.
\end{proof}
 
\subsection{Second order necessary optimality conditions}

Given a feasible control $u$, the critical cone is defined as
\be
C(u) :=
\left\{
\begin{split}
  & v\in L^1(0,T) \,|\ \Lambda(t) v(t)=0\text{ a.e. on } [0,T], \\
  & v(t)\ge 0\,\, \text{a.e. on } I_m( u),\; v(t)\le 0\text{ a.e. on }I_M( u)
\end{split}\right\}.
\ee

\begin{theorem} 
\label{thm:nec_sec_ord_cond}
Let $\uh\in \calu$ be a local solution
of \eqref{problem:OC} 
and $\ph$ be the corresponding costate. Then there holds,
  \be
  \Q(z[v],v) \ge 0\quad \text{for all } v \in C(\uh).
  \ee
\end{theorem}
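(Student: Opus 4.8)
The plan is to combine the second-order expansion of the reduced cost from Corollary~\ref{cor:expansion2} with the local optimality of $\uh$, after constructing admissible variations that approximate an arbitrary critical direction. Fix $v\in C(\uh)$ and recall from Corollary~\ref{cor:expansion2} that, for $w\in\U$,
\be
F(\uh+w)=F(\uh)+DF(\uh)w+\half\Q(z[w],w)+O(\|w\|_1^3).
\ee
By Theorem~\ref{diffcostfct}, $DF(\uh)w=\int_0^T\Lambda(t)w(t)\,\dd t$, and the defining relation $\Lambda(t)w(t)=0$ a.e. of the critical cone makes this term vanish whenever $w\in C(\uh)$. Since $w\mapsto z[w]$ is linear, $\Q(z[\cdot],\cdot)$ is homogeneous of degree two, so $\Q(z[\sigma w],\sigma w)=\sigma^2\Q(z[w],w)$ for $\sigma>0$. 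Hence, if for small $\sigma>0$ the control $\uh+\sigma w$ is admissible and $w\in C(\uh)$, the inequality $F(\uh+\sigma w)\ge F(\uh)$ yields, after dividing by $\sigma^2$ and letting $\sigma\downarrow0$, the bound $\Q(z[w],w)\ge0$.

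It remains to produce, for each $v\in C(\uh)$, admissible variations in the direction $v$. The obstacle is that $v$ need only lie in $L^1(0,T)$ and may be unbounded and nonzero where $\uh$ is close (but not equal) to the bounds, so $\uh+\sigma v$ may leave $\Uad$ for every $\sigma>0$. I would first treat bounded directions and then pass to the limit. For $v\in C(\uh)$ with $|v|\le N$, set, for $\tau>0$,
\be
v^\tau:=v\,\mathbf{1}_{\{u_m+\tau\le\uh\le u_M-\tau\}\,\cup\,I_m(\uh)\,\cup\,I_M(\uh)}.
\ee
Then $v^\tau\in C(\uh)$, since the sign conditions on the contact sets and the relation $\Lambda v^\tau=0$ are inherited from $v$; moreover $\uh+\sigma v^\tau\in\Uad$ for $\sigma\le\tau/N$: on the truncated interior the distance $\tau$ to the bounds absorbs $\sigma|v^\tau|\le\sigma N$, while on $I_m(\uh)$ (resp. $I_M(\uh)$) the sign $v\ge0$ (resp. $v\le0$) keeps $\uh+\sigma v^\tau$ on the admissible side once $\sigma N\le u_M-u_m$. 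Applying the first step to $w=v^\tau$ gives $\Q(z[v^\tau],v^\tau)\ge0$.

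Letting $\tau\downarrow0$, dominated convergence gives $v^\tau\to v$ in $L^1(0,T)$, because the sets $\{u_m<\uh<u_m+\tau\}$ and $\{u_M-\tau<\uh<u_M\}$ shrink to null sets. Since $w\mapsto z[w]$ is continuous from $L^1(0,T)$ into $C(0,T;\Hbc)$ by Lemma~\ref{Lemmaestz}, and $\ph$ and $\B_2$ are bounded, each term of $\Q$ passes to the limit: $\int_0^T q(z[v^\tau])\,\dd t$ and $q_T(z[v^\tau](T))$ converge by the uniform convergence of $z[v^\tau]$, the cross term $\int_0^T v^\tau\,\Re\la\ph,\B_2 z[v^\tau]\ra\,\dd t$ converges from the $L^1$ convergence of $v^\tau$ together with the uniform convergence of $z[v^\tau]$, and $\alpha_2\int_0^T(v^\tau)^2\,\dd t$ converges by dominated convergence since $v\in L^\infty$. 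Thus $\Q(z[v],v)\ge0$ for every bounded $v\in C(\uh)$. Finally, for general $v\in C(\uh)$ I would use the amplitude truncations $v_N:=v\,\mathbf{1}_{\{|v|\le N\}}\in C(\uh)$, which converge to $v$ in $L^1$ (and in $L^2$ when $\alpha_2>0$, where membership in $L^2$ is what renders $\Q$ finite), and conclude by the same continuity of $\Q(z[\cdot],\cdot)$.

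The main obstacle is precisely this interplay between admissibility and approximation: one must simultaneously truncate $v$ away from the boundary of $\Uad$ and, if $v$ is unbounded, in amplitude, while staying inside the critical cone $C(\uh)$ and preserving the convergence of the non-coercive quadratic form $\Q(z[\cdot],\cdot)$. The sign structure of $C(\uh)$ on the contact sets and the continuity estimate of Lemma~\ref{Lemmaestz} are what make both the feasibility construction and the passage to the limit go through.
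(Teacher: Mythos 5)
Your argument is correct and is essentially the proof the paper has in mind: the paper itself only cites \cite[Thm.~3.3]{aronna:hal-01273496}, whose proof is exactly this scheme of (a) establishing $\Q(z[w],w)\ge 0$ for critical directions along which $\uh+\sigma w$ stays feasible, via the expansion of Corollary~\ref{cor:expansion2} and the vanishing of $DF(\uh)w$ on $C(\uh)$, and (b) passing to general $v\in C(\uh)$ by truncation near the boundary of $\Uad$ and in amplitude, using the $L^1\to C(0,T;\Hbc)$ continuity of $v\mapsto z[v]$ from Lemma~\ref{Lemmaestz} to pass to the limit in $\Q$. Your handling of the feasibility of $\uh+\sigma v^\tau$, the stability of $C(\uh)$ under the truncations, and the convergence of each term of $\Q$ is sound.
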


\begin{proof}
The proof is similar to the one of theorem
3.3 in \cite{aronna:hal-01273496}. 
\end{proof}

\subsection{Second order sufficient optimality conditions}
In this subsection we assume that 
$\alpha_2 >0$, and obtain
second order sufficient optimality conditions.
Consider the following condition: there exists
$\alpha_0 >0$ such that
\be
\label{thm:nec_sec_ord_cond-suff}
  \Q(z,v) \geq \alpha_0 \int_0^T v(t)^2 \dd t,
\quad \text{for all } v \in C(\uh).
  \ee

\begin{theorem} 
\label{thm:nec_sec_ord_cond-2}
Let $\uh\in \calu$ satisfy the first order optimality conditions
of \eqref{problem:OC},
$\ph$ being the corresponding costate,
as well as \eqref{thm:nec_sec_ord_cond-suff}
Then $\uh$ is a local solution of problem $(P)$,
that satisfies the quadratic growth condition. 
\end{theorem}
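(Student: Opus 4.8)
The plan is to argue by contradiction, following the standard scheme for second-order sufficient conditions but exploiting $\alpha_2>0$ in an essential way. Suppose the quadratic growth condition fails at $\uh$. Then there is a sequence $(u_k)\subset\Uad$ with $u_k\to\uh$ in $L^2(0,T)$ and $F(u_k)\le F(\uh)+\tfrac1k\rho_k^2$, where $v_k:=u_k-\uh$ and $\rho_k:=\|v_k\|_2\to 0$. Setting $\bar v_k:=v_k/\rho_k$, so that $\|\bar v_k\|_2=1$, I would extract a subsequence with $\bar v_k\rightharpoonup\bar v$ weakly in $L^2(0,T)$, and aim to contradict the failure of quadratic growth by showing $\bar v\in C(\uh)$ and applying \eqref{thm:nec_sec_ord_cond-suff}.

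First I would record the consequences of feasibility and of the first-order conditions. Since $u_k\in\Uad$, the increment $v_k$ is nonnegative on $I_m(\uh)$ and nonpositive on $I_M(\uh)$; as these sign constraints define closed convex (hence weakly closed) sets, $\bar v$ obeys the same conditions, and with Proposition \ref{Prop1order} this gives $\Lambda\,\bar v\ge 0$ a.e. Moreover \eqref{first_ord_nec} yields $\Lambda v_k\ge 0$ a.e., so $DF(\uh)v_k=\int_0^T\Lambda v_k\,\dd t\ge 0$, and since $\Lambda\in L^\infty(0,T)$, weak convergence gives $\int_0^T\Lambda\bar v_k\,\dd t\to L:=\int_0^T\Lambda\bar v\,\dd t\ge 0$. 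Now I would insert the expansion of Corollary \ref{cor:expansion2}, $F(u_k)=F(\uh)+DF(\uh)v_k+\tfrac12\Q(z[v_k],v_k)+O(\|v_k\|_1^3)$, using $DF(\uh)v_k=\rho_k\int_0^T\Lambda\bar v_k\,\dd t$, the homogeneity $\Q(z[v_k],v_k)=\rho_k^2\,\Q(z[\bar v_k],\bar v_k)$ (linearity of $v\mapsto z[v]$), the bound $\|v_k\|_1\le\sqrt T\,\rho_k$, and Lemma \ref{Lemmaestz} which gives $\Q(z[\bar v_k],\bar v_k)=O(1)$. Dividing the contradiction inequality by $\rho_k$ and letting $k\to\infty$ kills the $O(\rho_k)$ and $O(\rho_k^2)$ terms and forces $L\le 0$, whence $L=0$; combined with $\Lambda\bar v\ge 0$ a.e. this gives $\Lambda\bar v=0$ a.e., so $\bar v\in C(\uh)$.

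Next I would pass to the limit at second order. I split $\Q(z,v)=\Q_c(z,v)+\alpha_2\|v\|_2^2$, where $\Q_c$ collects $\int_0^T q(z)\,\dd t$, $q_T(z(T))$, and the cross term $2\int_0^T v\,\Re\la\ph,\B_2 z\ra\,\dd t$. The crucial analytic input is that, by the compactness hypothesis \eqref{compachyp}, $\bar v_k\rightharpoonup\bar v$ implies the linearized states converge strongly, $z[\bar v_k]\to z[\bar v]$ in $L^2(0,T;\Hbc)$ with $z[\bar v_k](T)\to z[\bar v](T)$; since $\ph\in C(0,T;\Hbc^*)$ and the cross term is a product of a weakly and a strongly convergent factor, each piece of $\Q_c$ is continuous for this mode of convergence, so $\Q_c(z[\bar v_k],\bar v_k)\to\Q_c(z[\bar v],\bar v)$. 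As $\|\bar v_k\|_2^2=1$, this yields $\lim_k\Q(z[\bar v_k],\bar v_k)=\Q(z[\bar v],\bar v)+\alpha_2(1-\|\bar v\|_2^2)$. Dividing the contradiction inequality by $\rho_k^2$, dropping the nonnegative term $\rho_k^{-1}\int_0^T\Lambda\bar v_k\,\dd t\ge0$, and letting $k\to\infty$ gives $0\ge\tfrac12\big[\Q(z[\bar v],\bar v)+\alpha_2(1-\|\bar v\|_2^2)\big]$. Applying \eqref{thm:nec_sec_ord_cond-suff} to $\bar v\in C(\uh)$, i.e. $\Q(z[\bar v],\bar v)\ge\alpha_0\|\bar v\|_2^2$, the bracket is at least $(\alpha_0-\alpha_2)\|\bar v\|_2^2+\alpha_2$, which is affine in $\|\bar v\|_2^2\in[0,1]$ and hence bounded below by $\min(\alpha_0,\alpha_2)>0$, a contradiction. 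This proves quadratic growth, and in particular that $\uh$ is a local solution.

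The main obstacle will be the limit passage in $\Q$: the term $\alpha_2\|v\|_2^2$ is only weakly lower semicontinuous, so mass can be lost in the weak limit ($\|\bar v\|_2\le 1$) and one cannot hope for $\Q(z[\bar v_k],\bar v_k)\to\Q(z[\bar v],\bar v)$. The argument must therefore keep the exact value $\alpha_2$ of the normalized quadratic term and rely on $\alpha_2>0$ to absorb the lost mass, which is precisely what produces the lower bound $\min(\alpha_0,\alpha_2)$. The other delicate point is justifying the weak-to-strong convergence $z[\bar v_k]\to z[\bar v]$ from \eqref{compachyp}, which is what makes $\Q_c$ continuous along the sequence and allows $\bar v\in C(\uh)$ to be exploited.
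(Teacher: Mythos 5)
Your argument is, in substance, exactly the one the paper delegates to the literature: the paper's ``proof'' of Theorem \ref{thm:nec_sec_ord_cond-2} is a one-line pointer to \cite[Thm.~4.3]{MR2683898} and to Casas--Tr\"oltzsch \cite{MR3470757}, and what you wrote out is the standard contradiction/normalization/weak-limit scheme from those references, including the key role of $\alpha_2>0$ in absorbing the mass lost under weak convergence via the bound $\alpha_0\|\bar v\|_2^2+\alpha_2(1-\|\bar v\|_2^2)\ge\min(\alpha_0,\alpha_2)$. The first-order part (sign conditions preserved under weak limits, $L=\int_0^T\Lambda\bar v\,\dd t=0$, hence $\bar v\in C(\uh)$) and the order-of-magnitude bookkeeping via Corollary \ref{cor:expansion2} and Lemma \ref{Lemmaestz} are all correct; the equivalence of $L^1$ and $L^2$ smallness on $\Uad$ (uniform $L^\infty$ bounds) also makes the choice of topology harmless.

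The one genuine soft spot is the limit passage in $\Q_c$. Hypothesis \eqref{compachyp} only asserts compactness of $f\mapsto\B_2\,y[y_0,f]$, so from $\bar v_k\rightharpoonup\bar v$ you get (after a Gronwall bootstrap through the $\uh\B_2 z$ term) strong $L^2(0,T;\Hbc)$ convergence of $\B_2 z[\bar v_k]$, which is enough for the cross term $2\int_0^T v\,\Re\la\ph,\B_2 z\ra\,\dd t$, but it does \emph{not} literally give strong convergence of $z[\bar v_k]$ itself, nor anything about the final values $z[\bar v_k](T)$. Hence the claims $\Q_c(z[\bar v_k],\bar v_k)\to\Q_c(z[\bar v],\bar v)$ for the terms $\int_0^T q(z_k)\,\dd t$ and $q_T(z_k(T))$ need an extra ingredient: either weak lower semicontinuity of $q$ and $q_T$ (i.e.\ nonnegativity/convexity, which holds in the Schr\"odinger application where both are squared norms, and which suffices here because you only need a $\liminf$ inequality in the favourable direction, together with $z_k(T)\rightharpoonup z(T)$ from the semigroup representation), or a strengthened compactness assumption (as delivered by Aubin--Lions in the application). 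At the level of the abstract theorem as stated, this is a gap --- though it is arguably a gap the paper itself leaves open by proving nothing --- and you should make explicit which of the two repairs you invoke.
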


\begin{proof}
It suffices to adapt the arguments in say 
\cite[Thm. 4.3]{MR2683898} or
Casas and Tr{\"o}ltzsch \cite{MR3470757}. 
\end{proof}

Using the technique of 
Bonnans and Osmolovski{\u\i} \cite{MR2731283}
we can actually deduce from  theorem
\ref{thm:nec_sec_ord_cond} 
that $\uh$ is a strong solution 
in the following sense (natural extension of the 
notion of strong 
solution in the sense of the calculus of variations).

\begin{definition}
We say that a control $\uh\in \calu_{ad}$
is a strong solution if there exists $\eps>0$
such that, if $u\in \calu_{ad}$ and
$\| y[u]-y[\uh] \|_{C(0,T;\Hbc)} < \eps$, then 
$F(\uh) \leq F(u)$. 
\end{definition}

In the context of optimal control of PDEs, sufficient conditions
for strong optimality were recently
obtained for elliptic state equations in 
Bayen et al. \cite{MR3152723},
and for parabolic equations by 
Bayen and Silva \cite{MR3478951}, and by 
Casas and Tr{\"o}ltzsch \cite{MR3470757}.

We consider the part of the Hamiltonian  depending on the control: 
\be
H(t,u) := \alpha_1 u +  \half \alpha_2 u^2 + u \Re \la \ph(t), \B(t) \ra,
\ee
where $\B(t):=\B(t)_1+\B(t)_2 \hat \Psi(t)$.
The Hamiltonian inequality reads
\be
H(t,\uh(t)) \leq H(t,u), \quad \text{for all $u\in [u_m,u_M]$,
for a.a. $t\in [0,T]$.}
\ee
Since $\alpha_2>0$, $H(t,\cdot)$ is a strongly convex function,
and therefore the Hamiltonian inequality follows from the
first order optimality conditions and in addition we have the
quadratic growth property
\be
\label{hutquadgr}
H(t,\uh(t)) + \half \alpha_2 (u-\uh(t))^2 \leq H(t,u), 
\quad \text{for all $u\in [u_m,u_M]$,
for a.a. $t\in [0,T]$.}
\ee

\begin{lemma}
\label{cvhamuk}
Let $\uh$ be feasible and satisfy the 
first order optimality conditions,
with $\alpha_2>0$.
Let $u_k$ be also feasible 
such that the associated states $\Psih_k := \Psi [u_k]$
converge to $\Psih$ in $C(0,T;\Hbc)$, and
$\limsup_k F(u_k) \leq F(\uh)$. Then 
$u_k\rar \uh$ in $L^2(0,T)$. 
\end{lemma}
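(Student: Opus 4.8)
The plan is to start from the exact expansion of Proposition~\ref{ExpEasy} and combine it with the first order conditions and the uniform convergence of the states. Set $v_k := u_k - \uh$ and $\delta\Psi_k := \Psih_k - \Psih$. By Proposition~\ref{ExpEasy},
\be
F(u_k) - F(\uh) = DF(\uh) v_k + \half \Q(\delta\Psi_k, v_k).
\ee
First I would check that $DF(\uh) v_k \ge 0$. By Theorem~\ref{diffcostfct} and \eqref{expr-lambda}, $DF(\uh)v_k = \int_0^T \Lambda(t) v_k(t)\,\dd t$, and the integrand is nonnegative a.e.: on $\{\Lambda>0\}\subset I_m(\uh)$ (Proposition~\ref{Prop1order}) one has $\uh = u_m$, so $v_k = u_k - u_m \ge 0$ by feasibility of $u_k$; on $\{\Lambda<0\}\subset I_M(\uh)$ one has $\uh=u_M$, so $v_k \le 0$; and $\Lambda v_k = 0$ where $\Lambda=0$. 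Hence $\Lambda v_k \ge 0$ a.e. and $DF(\uh)v_k \ge 0$. (This is exactly the integrated form of the Hamiltonian inequality underlying \eqref{hutquadgr}.)

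Next I would isolate the quadratic control term inside $\Q$. From \eqref{tildeQ},
\be
\Q(\delta\Psi_k, v_k) = \alpha_2 \int_0^T v_k(t)^2\,\dd t + \int_0^T q(\delta\Psi_k(t))\,\dd t + q_T(\delta\Psi_k(T)) + 2\int_0^T v_k(t)\,\Re\la \ph(t), \B_2 \delta\Psi_k(t)\ra\,\dd t.
\ee
Since $u_k,\uh \in \Uad$, the differences $v_k$ are bounded in $L^\infty(0,T)$ by $u_M-u_m$, hence in every $L^p(0,T)$. The hypothesis $\Psih_k \to \Psih$ in $C(0,T;\Hbc)$ gives $\delta\Psi_k \to 0$ in $C(0,T;\Hbc)$; combined with the continuity of the quadratic forms $q,q_T$, the boundedness of $\ph$ in $C(0,T;\Hbc^*)$ and of $\B_2\in\L(\Hbc)$, the three remaining terms tend to $0$. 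Thus $\Q(\delta\Psi_k, v_k) = \alpha_2 \int_0^T v_k^2\,\dd t + o(1)$.

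Combining the two steps yields
\be
F(u_k) - F(\uh) \ge \frac{\alpha_2}{2}\int_0^T v_k(t)^2\,\dd t + o(1).
\ee
Taking $\limsup_k$ and using the hypothesis $\limsup_k F(u_k) \le F(\uh)$ together with $\alpha_2>0$, I would obtain $\limsup_k \int_0^T v_k^2\,\dd t \le 0$; since this integral is nonnegative it converges to $0$, i.e. $u_k \to \uh$ in $L^2(0,T)$. The step requiring the most care is the vanishing of the cross term $2\int_0^T v_k\,\Re\la\ph, \B_2\delta\Psi_k\ra\,\dd t$: this is precisely where both hypotheses enter, the $L^\infty$-bound on feasible controls keeping $v_k$ bounded while the $C(0,T;\Hbc)$-convergence forces $\delta\Psi_k$ to zero uniformly, so that the product is $O(\|\delta\Psi_k\|_{C(0,T;\Hbc)})\to 0$.
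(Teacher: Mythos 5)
Your proof is correct and follows essentially the same route as the paper's: the paper phrases the key inequality via the Hamiltonian quadratic growth property \eqref{hutquadgr}, which is exactly your combination of $\Lambda v_k \ge 0$ (from the first order conditions and feasibility) with the $\tfrac{\alpha_2}{2}\|v_k\|_2^2$ term, and it likewise uses the uniform convergence of the states to discard the remaining terms of the expansion. Your write-up simply makes explicit the vanishing of the cross and state-quadratic terms that the paper's one-line argument leaves implicit.
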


\begin{proof}
Since $u_k$ is bounded in $L^\infty(0,T)$, 
from the expression of the cost function of
the optimal control problem in view of theorem \ref{diffcostfct} and corollary \ref{cor:expansion2}, it follows that
\be
0 \geq \limsup_k (F(u_k)-F(\uh)) 
=\limsup_k \int_0^T ( H(t,u_k(t))- H(t,\uh(t)) ) \dd t.
\ee
Then the conclusion follows from 
the quadratic growth property 
\eqref{hutquadgr}.
\end{proof}

For $u_k$ as in Lemma \ref{cvhamuk} we have
\be
B_k := \{t\in (0,T); \; |u_k(t)-\uh(t)| > \sqrt{ \|u_k-\uh\|_1} \};
\quad A_k := (0,T)\setminus B_k.
\ee
Note that 
\be
\label{est-bk}
| B_k | \leq \int_0^T \frac{|u_k(t)-\uh(t)| }
{ \sqrt{\|u_k-\uh\|_1} } \dd t
=
\sqrt{\|u_k-\uh\|_1 }.
\ee
Set for a.a. $t$:
\be
v^A_k(t) := (u_k(t)-\uh(t)) \1B_{A_k}(t); \quad
v^B_k(t) := (u_k(t)-\uh(t)) \1B_{B_k}(t).
\ee
We now extend to the semigroup setting the 
{\em decomposition principle} from 
\cite{MR2731283},
which has been extended to the elliptic setting 
by 
\cite{MR3152723},
and to the parabolic setting by
\cite{MR3478951}.

\begin{theorem}[Decomposition principle]
\label{thm:dp}
For $u_k$ as in Lemma \ref{cvhamuk} we have that
$| B_k| \rar 0$, and 
\be
F(u_k) = F(\uh+ v^A_k) + F(\uh+ v^B_k) - F(\uh) +o(\|u_k-\ub\|^2_2).
\ee
and also 
\be
F(\uh+v^B_k) - F(\uh) =
\int_{B^k} ( H(t,u_k(t))- H(t,\uh(t)) ) \dd t
+o(\|u_k-\ub\|^2_2).
\ee
\end{theorem}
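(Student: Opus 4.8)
The plan is to prove the decomposition principle by exploiting the second-order expansion of the cost from Corollary~\ref{cor:expansion2}, together with the splitting $u_k - \uh = v^A_k + v^B_k$ and the crude estimate \eqref{est-bk} on $|B_k|$. First I would establish $|B_k|\to 0$: since $u_k\to\uh$ in $L^2(0,T)$ by Lemma~\ref{cvhamuk}, we have $\|u_k-\uh\|_1\to 0$, and then \eqref{est-bk} gives $|B_k|\le\sqrt{\|u_k-\uh\|_1}\to 0$ immediately.

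\medskip

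The core of the argument is to compare the three expansions obtained from Corollary~\ref{cor:expansion2} applied to the controls $u_k=\uh+v^A_k+v^B_k$, $\uh+v^A_k$, and $\uh+v^B_k$. Writing $z_k:=z[v^A_k+v^B_k]$, $z^A_k:=z[v^A_k]$, $z^B_k:=z[v^B_k]$, linearity of the map $v\mapsto z[v]$ gives $z_k=z^A_k+z^B_k$. The plan is to feed these into the quadratic form $\Q$ and observe that, since $\Q(z,v)$ is a genuine quadratic form in $(z,v)$ (see \eqref{tildeQ}), the cross terms between the $A_k$- and $B_k$-parts are what must be shown negligible. Concretely,
\be
\Q(z_k,v^A_k+v^B_k) - \Q(z^A_k,v^A_k) - \Q(z^B_k,v^B_k)
\ee
equals twice the bilinear cross term, involving $\int_0^T q\text{-cross terms}$, the final term $q_T$-cross term, and $\int_0^T (v^A_k \Re\la\ph,\B_2 z^B_k\ra + v^B_k \Re\la\ph,\B_2 z^A_k\ra)\dd t$; there is no $\alpha_2 v^A_k v^B_k$ contribution since $v^A_k v^B_k=0$ by disjointness of the supports $A_k$, $B_k$. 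The first-order terms $DF(\uh)(v^A_k+v^B_k)=DF(\uh)v^A_k+DF(\uh)v^B_k$ split exactly by linearity of \eqref{diffcostfct1}, and the cubic remainders are $O(\|u_k-\uh\|_1^3)=o(\|u_k-\uh\|_2^2)$. Thus the first displayed identity reduces to showing that the cross term is $o(\|u_k-\uh\|_2^2)$.

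\medskip

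The main obstacle, and the heart of the proof, is this cross-term estimate. The key is that $z^B_k$ is driven only by $v^B_k$, which is supported on the small set $B_k$ with $|B_k|\to 0$, so by Lemma~\ref{Lemmaestz} one has $\|z^B_k\|_{L^\infty(0,T;\Hbc)}=O(\|v^B_k\|_1)$, and the definition of $B_k$ forces $v^B_k$ to consist of the ``large'' values, so that $\|v^B_k\|_1$ is controlled in terms of $\|v^B_k\|_2$ and $|B_k|$ via Cauchy--Schwarz, namely $\|v^B_k\|_1\le |B_k|^{1/2}\|v^B_k\|_2$. Combining this with $|B_k|\to 0$ and boundedness of $\ph$, $Q$, $Q_T$, $\B_2$ yields that each cross term is bounded by a product in which at least one factor carries a vanishing power of $|B_k|$ against the $L^2$-norms, giving the $o(\|u_k-\uh\|_2^2)$ bound. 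For the second displayed identity, I would again invoke Corollary~\ref{cor:expansion2} for $\uh+v^B_k$ and argue that, because $v^B_k$ lives on the vanishing set $B_k$, the purely quadratic contribution $\half\Q(z^B_k,v^B_k)$ involving $q(z^B_k)$, $q_T(z^B_k(T))$, and the bilinear state--control coupling is itself $o(\|u_k-\uh\|_2^2)$ (the state-dependent terms being $O(\|v^B_k\|_1^2)=O(|B_k|\,\|v^B_k\|_2^2)$), so that only the first-order term $DF(\uh)v^B_k=\int_{B_k}\Lambda\, v^B_k\dd t$ survives; rewriting $\int_{B_k}\Lambda v^B_k$ together with the $\alpha_2$-term as $\int_{B_k}(H(t,u_k)-H(t,\uh))\dd t$ via the definition of $H$ and $\Lambda$ then gives the claimed formula. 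I expect the delicate bookkeeping of which terms are $O(|B_k|\|v^B_k\|_2^2)$ versus genuinely second order to be the most technical point, essentially mirroring the decomposition-principle arguments of \cite{MR2731283,MR3152723,MR3478951} in the present semigroup framework.
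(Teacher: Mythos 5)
Your proposal is correct and follows essentially the same route as the paper's proof: both rest on the linearity $z_k=z^A_k+z^B_k$, the disjointness $v^A_kv^B_k=0$, the chain $\|v^B_k\|_1\le |B_k|^{1/2}\|v^B_k\|_2=o(\|v^B_k\|_2)$ combined with $\|z^B_k\|_{C(0,T;\Hbc)}=O(\|v^B_k\|_1)$ from Lemma \ref{Lemmaestz}, and the resulting negligibility of all cross and state-dependent terms carrying a $B_k$ factor in the expansion of Corollary \ref{cor:expansion2}. Your treatment of the second identity (retaining only $DF(\uh)v^B_k+\tfrac12\alpha_2\|v^B_k\|_2^2$ and recombining it into $\int_{B_k}(H(t,u_k)-H(t,\uh))\,\dd t$ via the relation $H(t,u)-H(t,\uh(t))=\Lambda(t)(u-\uh(t))+\tfrac12\alpha_2(u-\uh(t))^2$) matches what the paper leaves implicit.
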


\begin{proof}
Remember the linearized state equation \eqref{semg5lin}
whose solution is denoted by $z[v]$.
Set 
\be
v_k:=u_k-\uh; \;\; z_k:=z[v_k]; \;\; 
z^A_k:=z[v^A_k]; \;\; z^B_k:=z[v^B_k].
\ee
Since $A_k\cap B_k$ has null measure, we have that
$z_k=z^A_k + z^B_k$. 
Also, 
\be
\label{estimvB}
\|v^B_k\|_1 \leq 
 | B_k|^{1/2} \|v^B_k\|_2 = o(\|v^B_k\|_2),
\ee
since $|B_k| \rar 0$ by lemma \ref{cvhamuk}.
Then, in view of lemma \ref{Lemmaestz}:
\be
\label{estimzB}
\|z^B_k\|_{C(0,T;\Hbc)} = O( \|v^B_k\|_1) = o(\|v^B_k\|_2).
\ee
Combining with corollary \ref{cor:expansion2}
and using the fact that $v^A_k(t) v^B_k(t)=0$
a.e., we deduce that
\be
\ba{lll}
F(u_k) -F(\uh) 
&= &
DF(\uh)v_k + \half \calq(v_k,z_k) + o(\|v_k\|^2_2)
\\ & = & \disp 
DF(\uh)v_k + \half \calq(v_k,z^A_k) + o(\|v_k\|^2_2)
\\ & = & \disp 
DF(\uh)v^A_k + \half \calq(v^A_k,z^A_k) 
+DF(\uh)v^B_k + \half \alpha \|v^B_k\|^2_2 
\\ &  & \disp
+2 \int_0^T v_k^B(t) \Re \la \ph(t), \B_2 z^A_k(t) \ra \dd t
+ o(\|v_k\|^2_2)
\\ & = & \disp 
 DF(\uh)v^A_k + \half \calq(v^A_k,z^A_k) 
+ DF(\uh)v^B_k + \half \alpha \|v^B_k\|^2_2 
+ o(\|v_k\|^2_2),
\ea\ee
where we have used the fact that, by \eqref{estimvB}:
\be
\left| \int_0^T v_k^B(t) \Re \la \ph(t), \B_2 z^A_k(t) \ra \dd t \right|
=
O(
\| v^B_k \|_1 \| z_k^A\|_{C(0,T;\Hbc)} ) = o(\| v_k \|^2_2).
\ee

Now
\be
F(\uh+ v^A_k) -F(\uh) = DF(\uh)v^A_k + \half \calq(v^A_k,z^A_k) 
+ o(\|v^A_k\|^2_2),
\ee
and by \eqref{estimzB}
\be
F(\uh+ v^B_k) -F(\uh) = DF(\uh)v^B_k  + \half \alpha_2 \|v^B_k\|^2_2
+ o(\|v^B_k\|^2_2).
\ee

Combining the above relations we get the desired result.
\end{proof}

\begin{definition}
We say that $\uh$ satisfies the
\emph{quadratic growth condition for strong solutions}
if there exists $\eps>0$ and $\eps'>0$ such that 
for any feasible control $u$: 
\be
F(\uh) + \eps \|u-\uh\|^2_2 \leq F(u),
\quad 
\text{whenever $\|\Psi[u]-\Psi[\ub]\|_{C(0,T;\Hbc)} < \eps'.$}
\ee
\end{definition}

\begin{theorem}
Let $\uh$ satisfy the first order necessary 
optimality condition \eqref{first_ord_nec}, and the second order sufficient condition
\eqref{thm:nec_sec_ord_cond-suff}. 
Then $\uh$ is a strong minimum that satisfies the above
quadratic growth condition.
\end{theorem}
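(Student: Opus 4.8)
The plan is to argue by contradiction, following the strong-optimality scheme of \cite{MR2731283} organized around the decomposition principle of Theorem \ref{thm:dp}. Suppose $\uh$ does not satisfy the quadratic growth condition for strong solutions. Then there is a sequence $(u_k)$ of feasible controls, $u_k\neq\uh$, with $\|\Psi[u_k]-\Psi[\uh]\|_{C(0,T;\Hbc)}\to 0$ and $F(u_k)-F(\uh)\le \frac1k\|u_k-\uh\|_2^2$. Since the $u_k$ are uniformly bounded in $L^\infty(0,T)$, the right-hand side tends to $0$, so $\limsup_k F(u_k)\le F(\uh)$; Lemma \ref{cvhamuk} then gives $u_k\to\uh$ in $L^2(0,T)$. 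Writing $v_k:=u_k-\uh$, $\rho_k:=\|v_k\|_2\to 0$, and splitting $v_k=v^A_k+v^B_k$ as in Theorem \ref{thm:dp}, I would invoke the decomposition principle.

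Next I would treat the two pieces separately. For the ``bad'' part, the second identity of Theorem \ref{thm:dp} combined with the Hamiltonian quadratic growth \eqref{hutquadgr} gives $F(\uh+v^B_k)-F(\uh)\ge \frac{\alpha_2}{2}\|v^B_k\|_2^2+o(\rho_k^2)$. For the ``good'' part, Corollary \ref{cor:expansion2} (whose $O(\|v^A_k\|_1^3)$ remainder is $o(\rho_k^2)$ since $\|v^A_k\|_1^3=O(\rho_k^3)$) yields $F(\uh+v^A_k)-F(\uh)=DF(\uh)v^A_k+\frac12\Q(z^A_k,v^A_k)+o(\rho_k^2)$, with $z^A_k=z[v^A_k]$. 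The first-order conditions \eqref{first_ord_nec} together with feasibility give $\Lambda(t)v_k(t)\ge 0$ a.e., so by Theorem \ref{diffcostfct} we have $DF(\uh)v^A_k=\int_0^T\Lambda v^A_k\,\dd t\ge 0$. Assembling the decomposition and cancelling $o(\rho_k^2)$ terms leads to $o(\rho_k^2)\ge DF(\uh)v^A_k+\frac12\Q(z^A_k,v^A_k)+\frac{\alpha_2}{2}\|v^B_k\|_2^2$.

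Then I would normalize: set $w^A_k:=v^A_k/\rho_k$, $w^B_k:=v^B_k/\rho_k$, so $\|w^A_k\|_2^2+\|w^B_k\|_2^2=1$, and pass to a subsequence with $w^A_k\rightharpoonup\bar v$ weakly in $L^2$. Using $z^A_k=\rho_k\, z[w^A_k]$ and the quadratic homogeneity of $\Q$, dividing by $\rho_k^2$ gives $o(1)\ge \rho_k^{-2}DF(\uh)v^A_k+\frac12\Q(z[w^A_k],w^A_k)+\frac{\alpha_2}{2}\|w^B_k\|_2^2$. The step I expect to be the main obstacle is showing $\bar v\in C(\uh)$: the sign conditions defining $C(\uh)$ pass to the weak limit by feasibility, but the complementarity $\Lambda\bar v=0$ is delicate. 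For it, observe that $\Q(z^A_k,v^A_k)=O(\rho_k^2)$ forces $0\le DF(\uh)v^A_k=O(\rho_k^2)$; then on $\{|\Lambda|>\tau\}$, where $\Lambda v^A_k\ge \tau|v^A_k|$, one gets $\|w^A_k\1B_{\{|\Lambda|>\tau\}}\|_1=O(\rho_k/\tau)\to 0$, so $\bar v$ vanishes on $\{|\Lambda|>\tau\}$ for every $\tau>0$, i.e. $\Lambda\bar v=0$ a.e.

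Finally I would pass to the limit. By the compactness hypothesis \eqref{compachyp}, $w^A_k\rightharpoonup\bar v$ implies $z[w^A_k]\to z[\bar v]$ strongly in $C(0,T;\Hbc)$, so the quadratic and crossed terms in $\Q$ converge while $\alpha_2\int_0^T (w^A_k)^2\,\dd t$ is weakly lower semicontinuous; hence $\liminf_k\Q(z[w^A_k],w^A_k)\ge \Q(z[\bar v],\bar v)+\alpha_2\liminf_k\int_0^T(w^A_k-\bar v)^2\,\dd t$. Since $\bar v\in C(\uh)$, the coercivity \eqref{thm:nec_sec_ord_cond-suff} gives $\Q(z[\bar v],\bar v)\ge\alpha_0\|\bar v\|_2^2$. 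Dropping the nonnegative term $\rho_k^{-2}DF(\uh)v^A_k$ and taking $\liminf$ in the normalized inequality yields $0\ge \frac{\alpha_0}{2}\|\bar v\|_2^2+\frac{\alpha_2}{2}\big(\liminf_k\int_0^T(w^A_k-\bar v)^2\,\dd t+\liminf_k\|w^B_k\|_2^2\big)$. As $\alpha_2>0$, this forces $\bar v=0$ and $w^A_k\to 0$, $w^B_k\to 0$ in $L^2$ along the subsequence, contradicting $\|w^A_k\|_2^2+\|w^B_k\|_2^2=1$. Therefore $\uh$ is a strong minimum satisfying the asserted quadratic growth condition. \epf
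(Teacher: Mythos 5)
Your proof is correct in substance and reaches the same conclusion, but after the common opening (contradiction setup, Lemma \ref{cvhamuk}, the decomposition principle of Theorem \ref{thm:dp}, and the sign information $DF(\uh)v^A_k\ge 0$, $DF(\uh)v^B_k\ge 0$ coming from \eqref{first_ord_nec}) it diverges from the paper's route. The paper splits into two cases according to whether $\|v^A_k\|_2/\|v_k\|_2\to 0$: in the degenerate case the inequality $\alpha_2\|v^B_k\|_2^2+F(\uh+v^A_k)-F(\uh)\le o(\|v_k\|_2^2)$ immediately self-destructs, and in the nondegenerate case it observes that $\|v^A_k\|_\infty\to 0$ (by the very definition of $A_k$) and invokes the already-established $L^\infty$-local quadratic growth furnished by \eqref{thm:nec_sec_ord_cond-suff} (i.e.\ Theorem \ref{thm:nec_sec_ord_cond-2}, whose proof is delegated to the cited references) to contradict $F(\uh+v^A_k)-F(\uh)\le o(\|v^A_k\|_2^2)$. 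You instead avoid the dichotomy entirely and inline the Legendre-form argument: normalization by $\rho_k$, extraction of a weak limit $\bar v$, verification that $\bar v\in C(\uh)$ (your argument for $\Lambda\bar v=0$ via $DF(\uh)v^A_k=O(\rho_k^2)$ and the sets $\{|\Lambda|>\tau\}$ is the standard one and is sound), lower semicontinuity of $\Q$ plus the coercivity \eqref{thm:nec_sec_ord_cond-suff}, and a contradiction with $\|w^A_k\|_2^2+\|w^B_k\|_2^2=1$. What the paper's route buys is brevity, at the cost of leaning on an externally-proved theorem; what yours buys is self-containedness and a uniform treatment of both cases. Two points to tighten: (i) the strong convergence $z[w^A_k]\to z[\bar v]$, together with convergence of the final values needed for the $q$ and $q_T$ terms, requires slightly more than the literal statement of \eqref{compachyp} (which only gives compactness of $f\mapsto\B_2 y[y_0,f]$ into $L^2(0,T;\Hbc)$); this is exactly the technical content hidden in the reference the paper cites for Theorem \ref{thm:nec_sec_ord_cond-2}, so you should either state it as an assumption or prove it. (ii) In the last step it is cleaner to keep the pointwise-in-$k$ inequality $\tfrac{\alpha_2}{2}\bigl(\|w^A_k-\bar v\|_2^2+\|w^B_k\|_2^2\bigr)\le o(1)-\tfrac{\alpha_0}{2}\|\bar v\|_2^2$ and conclude directly, rather than adding separate $\liminf$'s, which in general do not localize to a common subsequence.
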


\begin{proof}
If the conclusion is false, then there exists a sequence $u_k$
of feasible controls such that 
$\Psi_k\rar \Psih$ in $C(0,T;\Hbc)$, where $\Psi_k:=\Psi[u_k]$, and 
$F(u_k) \leq F(\uh) + o(\|u_k-\uh\|_2^2).$
By lemma \ref{cvhamuk},
$u_k\rar \uh$ in $L^2(0,T)$. 
By the decomposition theorem \ref{thm:dp} 
and since $DF(\uh) v^B_k\geq 0$, 
it follows that 
\be
\label{expansfab}
 \alpha_2 \|v^B_k\|^2_2 + 
F(\uh+v^A_k ) - F(\uh) \leq  o(\|v_k\|^2_2). 
\ee
We next distinguish two cases.
\\ (a) 
Assume that 
$\|v^A_k\|_2/ \|v_k\|_2 \rar 0$. 
We know that
\be
F(\uh+v^A_k ) - F(\uh) =
DF(\uh) v^A_k + \half \calq(v^A_k,z^A_k) +o(\|v^A_k\|^2_2).
\ee
Since (by the first order optimality conditions)
$DF(\uh) v^A_k \geq 0$ and 
$\calq(v^A_k,z^A_k) =  O(\|v^A_k\|^2_2 )=o(\|v_k\|^2_2)$ by hypothesis,
it follows with \eqref{expansfab} that
$\|v^B_k\|^2_2 = o(\|v_k\|^2_2) =  o(\|v^B_k\|^2_2)$
which gives a contradiction.
\\ (b) Otherwise, 
$\liminf_k \|v^A_k\|_2/ \|v_k\|_2 > 0$
(extracting if necessary a subsequence).
It follows from \eqref{expansfab} that 
\be
\label{expansfabb}
F(\uh+v^A_k ) - F(\uh) \leq  o(\|v^A_k\|_2). 
\ee
Since $\|v^A_k\|_\infty \rar 0$,
we obtain a contradiction with theorem
\ref{thm:nec_sec_ord_cond}.
\end{proof}

\begin{remark}
A shorter proof for theorem \ref{thm:dp} is obtained by 
combining lemma \ref{cvhamuk} and the Taylor expansion in 
corollary \ref{cor:expansion2}, which implies
\be
F(u) = F(\uh) + D F(\uh)v + \half  \Q(z,v) + O(\|v\|^3_2),
\ee 
from which we can state a sufficient condition for optimality in 
$L^2(0,T)$. 
On the other hand the present proof opens the way for dealing with
non quadratic (w.r.t. the control) Hamiltonian functions, 
as in \cite{MR3152723}.
\end{remark}

\section{Second order optimality conditions for singular problems}\label{sec:soonoc-goh}

In this section we assume that
$\alpha_2=0$, so that the control enters linearly in both the 
state equation and cost function. 
For such optimal control problems there is an extensive theory in the 
finite dimensional setting, see 
Kelley \cite{Kel64}, Goh \cite{Goh66a}, Dmitruk \cite{Dmi77,MR914861}, 
Poggiolini and Stefani \cite{MR2472886}, Aronna et al.
\cite{ABDL12}, and Frankowska and Tonon
\cite{frankowska:hal-01067270}; the case of additional scalar state
constraints was considered in Aronna et
al. \cite{aronna:hal-01081111}. 

In the context of optimal control of PDEs, there exist very few
papers on sufficient optimality conditions for affine-linear control
problems, see Bergounioux and Tiba \cite{MR1377719}, Tr\"oltzsch
\cite{MR2144184}, Bonnans and Tiba \cite{MR2656166},
Casas \cite{MR2974742}
(and the related literature involving $L^1$ norms, see e.g.
Casas et al. \cite {MR3032866}).
As mentioned in the introduction, here we will follow the ideas in 
\cite{aronna:hal-01273496,Bonnans:2013} 
by using in an essential way the Goh transform \cite{Goh66a}.

Let $E_1\subset \H$ with continuous inclusion, 
having the restriction property
(definition~\ref{def:restr_prop}).
We can denote the restriction 
of $\B_2$ to $E_1$ by $\B_2$ with no risk of confusion. 
In the rest of the paper we make the following hypothesis:
\be
\label{hyp-Goh-tr1a}
\left\{ \ba{lll}
{\rm (i)} &
\B_1 \in \dom (\A),\;  
 \\ {\rm (ii)} &
\B_2 \dom(\A) \subset \dom(\A),\quad 
\B_2^* \dom(\A^*) \subset \dom(\A^*),\;\; 
\ea\right.
\ee
with $\B^k_i := (\B_i)^k$.
So, we may define the operators
below, with domains $\dom(\A)$ and $\dom(\A^*),$ respectively:
\be
\left\{
\begin{split}
[\A,\B^k_2] &:=\A \B^k_2-\B^k_2\A ,\\ 
[(\B^k_2)^*,\A^*]&:=(\B^k_2)^*\A^* - \A^*(\B^k_2)^*.
\end{split}
\right.\ee

\be
\label{hyp-Goh-tr1b}
\left\{ \ba{lll}
{\rm (i)} &
\text{For $k=1,2$, 
$\left[\A, \B^k_2\right]$ has a continuous
extension to $E_1$,}
\\
& \text{denoted by $M_k$, 
}
\\ {\rm (ii)} &
f \in L^\infty(0,T;\Hbc);  \quad 
M^*_k \ph\in L^\infty(0,T;\Hbc^*), \; k=1,2,
\\ {\rm (iii)} &
\Psih\in L^2(0,T;E_1); \;\; 
[M_1,\B_2] \Psih \in L^\infty(0,T;\Hbc).
\ea\right.\ee

\begin{remark}
Point \eqref{hyp-Goh-tr1a} (ii) implies
\be
\B^k_2 \dom(\A) \subset \dom(\A),\quad 
(\B^k_2)^* \dom(\A^*) \subset \dom(\A^*),\qquad \text{for } k=1,2.
\ee
So, $[\A, \B_2]$ 
is well-defined as operator
with domain $\dom(\A)$, and point (iii) makes sense.
\end{remark}

We also assume that 

\be
\label{hyp-Goh-tr2}
\left\{ 
\ba{lll}
{\rm (i)} & 
\B^2_2 f\in C(0,T;\Hbc);
\; \; \Psi_d \in C(0,T;\Hbc), 
\\[1.5ex] {\rm (ii)} & 
M^*_k \ph \in C(0,T;\Hbc^*), \;\; k=1,2.
\ea\right.
\ee 
Let $\xi\in C(0,T;\Hbc)$ be (semigroup) solution of the
following equation 
\be
\label{diffeqxi}
\dot \xi + \A \xi = \uh \B_2 \xi + w b^1_z;
\quad \xi(0) =0,
\ee
where 
\be
\label{equ-bunz}
b^1_{z} 
:= -\B_2 f -  M_1 \Psih  - \A \B_1.
\ee
Note that $b^1_{z} \in C(0,T;\Hbc)$, so that equation
\eqref{diffeqxi} has a unique solution.  
Consider the space
\be
\label{def--ww}
W := \left(L^2(0,T;E_1) \cap C([0,T];\H) \right)
\times L^2(0,T) \times \cR.
\ee
We define the continuous quadratic forms 
over $W,$ defined by 
\be
\label{Omega}
\hatQ(\xi,w,h) = \hatQ_T(\xi,h) +\hatQ_a(\xi,w)+\hatQ_b(w),
\ee
where 
$\hatQ_b(w):= \int_0^T w^2(t) R(t)\dd t$ and 
\begin{align}\label{OmegaT}
\hatQ_T(\xi,h)&:= q_T(\xi(T) + h \calb(T) )
 + h^2  \Re \la \ph(T),\B_2\B_{1}+\B_2^2 \Psih(T)\ra +h 
\Re \la \ph(T),\B_2 \xi(T) \ra,
\\
\hatQ_a(\xi,w)&:=  \Re \int_0^T \Big( q(\xi) + 2 w  \la Q\xi,\B \ra
 + 2 w\la  Q(\Psih-\Psi_d),\B_2\xiz\ra - 
2w \la M^*_1 \ph, \xiz\ra   
\Big) \dd t,
\end{align}
with $R \in L^\infty(0,T)$ given by
\be
\label{R}
\left\{ 
\begin{split}
R(t)&:=  q(\B) + \Re \la Q(\Psih-\Psi_d),\B_2\B \ra +  \Re \la\ph(t),r(t) \ra,
\\
r(t) &:= \B_2^2 f(t)  -\A\B_2\B_1+2\B_2\A\B_1 
- 
\big[M_1,\B_2\big] \Psih.
\end{split}
\right. 
\ee

We write $PC_2(\uh)$ for the closure in the $L^2\times \RR$--topology  of the set
\be
PC(\uh) := \{(w,h) \in W^{1,\infty}(0,T) \times \RR, \dot w \in 
C(\uh); \; w(0)=0, \; w(T)=h  \}.
\ee
The final value of $w$ becomes an independent variable when we
consider this closure.

\begin{definition}[Singular arc]
  The control $\uh(\cdot)$ is said to have a {\em singular arc} in a nonempty interval $(t_1,t_2) \subset [0,T]$  if, 
for all $\theta>0,$ there exists $\eps>0$ such that
  \be
  \uh(t) \in
  [u_m+\varepsilon,u_M-\varepsilon],\quad \text{for a.a. } t\in (t_1+\theta,t_2-\theta).
  \ee
  We may also say that $(t_1,t_2)$ is a singular arc itself.
  We call $(t_1,t_2)$ a {\em lower boundary arc} if $\uh(t)=u_m$ for
  a.a. $t \in (t_1,t_2)$, and an {\em upper boundary arc} if $\uh(t)=u_M$
  for a.a. $t\in (t_1,t_2)$. 
We sometimes simply call them boundary arcs.
We say that a boundary arc $(c,d)$ is {\em initial} if $c=0$, and {\em final} if $d=T$. 
\end{definition}

\begin{lemma}\label{lem3}
For $v\in L^1(0,T)$ and $w \in AC(0,T)$,
$w(t) =\int_0^t v(s) \dd s$, 
 there holds 
  \be
  \Q(z[v],v)=\hatQ(\xi[w],w,w(T)).
  \ee
For any $(w,h) \in L^2(0,T)\times \RR$: 
  \be
\label{lem3-ome}
  \hatQ(\xi[w],w,h) \ge 0\quad \text{for all } (w,h) \in PC_2(\uh).
  \ee
In addition, provided the mapping
\begin{align}\label{comp-hyp-xi}
w\mapsto \xi[w],\quad 
L^2(0,T) \rar L^2(0,T;\Hbc)
\end{align}
is compact we have that
$R(t)\geq 0$ a.e. on singular arcs.
\end{lemma}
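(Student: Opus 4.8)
The plan is to treat the three assertions separately; the heart of the matter is the first one.

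\textbf{The Goh identity.} I would perform the Goh substitution $z=\xi+w\,\B$, where $\B(t):=\B_1+\B_2\Psih(t)$ and $w(t)=\int_0^t v(s)\,\dd s$. First I would verify that if $z=z[v]$ solves the linearized equation \eqref{semg5lin}, then $\xi:=z-w\B$ solves exactly \eqref{diffeqxi}: differentiating $z=\xi+w\B$, inserting $\dot w=v$ and $\dot\B=\B_2\dot\Psih=\B_2(-\A\Psih+f+\uh\B)$ from the state equation \eqref{semg1}, the terms carrying the factor $v$ cancel, the terms carrying $\uh w\B_2\B$ cancel, and using the commutator identity $\B_2\A\Psih-\A\B_2\Psih=-M_1\Psih$ one is left precisely with the source $w\,b^1_z$ of \eqref{equ-bunz}; hypotheses \eqref{hyp-Goh-tr1a}--\eqref{hyp-Goh-tr1b} are exactly what guarantees $b^1_z\in C(0,T;\Hbc)$ and gives meaning to the commutators. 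I would then substitute $z=\xi+w\B$ into $\Q(z,v)$ in \eqref{tildeQ} (with $\alpha_2=0$). Expanding $q(z)=q(\xi)+2w\Re\la Q\xi,\B\ra+w^2q(\B)$ produces the first two integrands of $\hatQ_a$ and the $q(\B)$ part of $R$. The crux is the cross term: writing $2v\,\Re\la\ph,\B_2 z\ra=2\dot w\,\Re\la\ph,\B_2\xi\ra+2w\dot w\,\Re\la\ph,\B_2\B\ra$ and integrating each summand by parts in $t$, using $w(0)=0$. The terminal contribution $q_T(z(T))=q_T(\xi(T)+h\B(T))$ together with the boundary values at $t=T$ from these two integrations by parts make up $\hatQ_T$; for the bulk I would compute $\tfrac{\dd}{\dd t}\Re\la\ph,\B_2\xi\ra$ from the costate equation \eqref{semg8} and from \eqref{diffeqxi}. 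The decisive cancellation is that the two contributions $\uh\Re\la\ph,\B_2^2\xi\ra$ annihilate each other (this is the whole point of the transform), leaving $\Re\la M_1^*\ph,\xi\ra-\Re\la Q(\Psih-\Psi_d),\B_2\xi\ra+w\Re\la\ph,\B_2 b^1_z\ra$; the first two complete $\hatQ_a$, while the $w^2$-terms (from $w\Re\la\ph,\B_2 b^1_z\ra$, from $-w^2\tfrac{\dd}{\dd t}\Re\la\ph,\B_2\B\ra$, and from $q(\B)$) assemble $R$ as in \eqref{R} after using the operator identity $2\B_2 M_1-M_2=-[M_1,\B_2]$. Collecting everything yields $\Q(z[v],v)=\hatQ(\xi[w],w,w(T))$; this reproduces, in the complex semigroup setting, the computation of \cite{aronna:hal-01273496,Bonnans:2013}.

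\textbf{Nonnegativity on $PC_2(\uh)$.} For $(w,h)\in PC(\uh)$ put $v:=\dot w\in C(\uh)$; then $w(t)=\int_0^t v$ and $h=w(T)$, so by the identity above and the second-order necessary condition of Theorem \ref{thm:nec_sec_ord_cond} one gets $\hatQ(\xi[w],w,h)=\Q(z[v],v)\ge 0$. Since $w\mapsto\xi[w]$ is continuous from $L^2(0,T)$ into $C(0,T;\Hbc)$ (by the estimates of Lemma \ref{Lemmaestz} applied to \eqref{diffeqxi}) and $\hatQ$ is a continuous quadratic form on $W$, the functional $(w,h)\mapsto\hatQ(\xi[w],w,h)$ is continuous for the $L^2\times\RR$ topology, and the inequality passes to the closure $PC_2(\uh)$, which is \eqref{lem3-ome}.

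\textbf{Sign of $R$ on singular arcs.} Let $(t_1,t_2)$ be a singular arc. There $\uh$ is interior, so $\Lambda\equiv0$ and the sign restrictions defining $C(\uh)$ are void; hence any $w\in C^\infty_c(t_1,t_2)$ has $\dot w\in C(\uh)$, $w(0)=0$, $w(T)=0$, i.e. $(w,0)\in PC(\uh)$. I would fix a bump $\phi\in C^\infty_c(t_1,t_2)$ and a smooth $1$-periodic zero-mean profile $\chi$, and set $w_k(t):=\phi(t)\chi(kt)$. Then $w_k\rightharpoonup0$ weakly in $L^2$, while $\int_0^T w_k^2\,R\,\dd t\to\overline{\chi^2}\int_0^T\phi^2 R\,\dd t$ with $\overline{\chi^2}>0$. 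By the compactness hypothesis \eqref{comp-hyp-xi}, $\xi[w_k]\to0$ strongly in $L^2(0,T;\Hbc)$, so every integrand of $\hatQ_a(\xi[w_k],w_k)$, being linear or quadratic in $\xi[w_k]$ with bounded remaining factors, tends to $0$; moreover the endpoint map $w\mapsto\xi[w](T)$ is compact from $L^2(0,T)$ to $\Hbc$ (it is an integral operator whose $\Hbc$-valued kernel lies in $L^2(0,T;\Hbc)$), so $\xi[w_k](T)\to0$ strongly and $\hatQ_T(\xi[w_k],0)\to0$. Passing to the limit in $0\le\hatQ(\xi[w_k],w_k,0)$ from \eqref{lem3-ome} gives $\overline{\chi^2}\int_0^T\phi^2 R\,\dd t\ge0$, hence $\int_0^T\phi^2 R\,\dd t\ge0$ for every such $\phi$; letting $\phi^2$ approximate the indicator of an arbitrary subinterval of $(t_1,t_2)$ yields $R\ge0$ a.e. there.

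\textbf{Main obstacle.} The hard part is making the Goh transform rigorous in the semigroup framework: the cancellations rest on commuting $\A$ through $\B_2$ and $\B_2^2$, which is only formal unless one stays within the relevant domains and uses the regularizing hypotheses \eqref{hyp-Goh-tr1a}--\eqref{hyp-Goh-tr2} (the continuous extensions $M_k$, and the regularity $\Psih\in L^2(0,T;E_1)$, $M_k^*\ph\in C(0,T;\Hbc^*)$, $\B_2^2 f\in C(0,T;\Hbc)$), so that each integration by parts is justified through the weak formulation and Lemma \ref{lem:IBP1} rather than by naive differentiation. The remaining parts are comparatively soft, the one delicate point in the last step being to exhibit an admissible family $w_k\in PC_2(\uh)$ that is simultaneously weakly null and of non-vanishing $L^2$-mass, together with the control of the endpoint term $\hatQ_T$.
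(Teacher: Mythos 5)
Your argument is correct and is essentially the proof the paper intends: the paper establishes Lemma \ref{lem3} by citing the corresponding results of its companion paper, and those rest on exactly the three steps you carry out --- the Goh substitution $z=\xi+w(\B_1+\B_2\Psih)$ with the commutator bookkeeping via $M_1$, $M_2$ and integrations by parts justified through the weak formulation and Lemma \ref{pqform}; continuity of $(w,h)\mapsto\hatQ(\xi[w],w,h)$ for the $L^2\times\RR$ topology to pass from $PC(\uh)$ to its closure $PC_2(\uh)$; and oscillating variations $w_k=\phi\,\chi(k\cdot)$ supported in a singular arc, combined with the compactness hypothesis \eqref{comp-hyp-xi} and the compactness of the endpoint map, to isolate the Legendre term $\int_0^T w^2 R\,\dd t$. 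The only point to double-check is the matching of constants between the boundary terms of your integration by parts and the displayed definition \eqref{OmegaT} of $\hatQ_T$ (the cross term $h\,\Re\la \ph(T),\B_2\xi(T)\ra$ arises naturally with a factor $2$ from $\int_0^T 2\dot w\,\Re\la \ph,\B_2\xi\ra\,\dd t$), but this is bookkeeping that does not affect the validity of the method.
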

\begin{proof}
 Similar to \cite[Lemma 3.9 and corollary 3.11]{aronna:hal-01273496}.
\end{proof}

In the following we assume that the following hypotheses hold: 
\begin{enumerate}
\item {\em finite structure:}
\be\label{finite_structures}
\left\{
\begin{array}{l}
\text{there are finitely many boundary and singular maximal arcs} \\ \text{and the closure of their union is $[0,T],$}
\end{array}
\right.
\ee
\item {\em strict complementarity} for the control constraint
(note that $\Lambda$ is a continuous function of time)
\be
\label{strict_complementarity}
\left\{ \begin{array}{l}
\text{$\Lambda$ has nonzero values over the interior of each boundary
  arc, and}
\\
\text{at time 0 (resp. $T$) if an initial (resp. final) boundary
  arc exists,}
\end{array}\right.
\ee
\end{enumerate}

\begin{proposition}
Let \eqref{finite_structures}--\eqref{strict_complementarity} hold.
Then
\be
PC_2(\uh)=
\left\{
\begin{array}{l}
(w,h)\in L^2(0,T)\times \cR;\text{ $w$ is constant over boundary arcs,}\\ \text{ $w=0$ over an initial boundary arc }\\
\text{and $w= h$ over a terminal boundary arc}
\end{array}
\right\}.
\ee
\end{proposition}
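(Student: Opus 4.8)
The plan is to write $\mathcal{S}$ for the set on the right-hand side and to prove the equality $PC_2(\uh)=\mathcal{S}$ as an identity of closed subspaces of $L^2(0,T)\times\RR$. Since $\mathcal{S}$ is cut out by the linear conditions ``$w$ constant on each boundary arc'', ``$w=0$ on the initial boundary arc'' and ``$w=h$ on the terminal boundary arc'', each of which is closed for the $L^2\times\RR$ topology, the set $\mathcal{S}$ is itself closed; by \eqref{finite_structures} only finitely many such conditions occur. It therefore suffices to establish the two inclusions $PC(\uh)\subseteq\mathcal{S}$ and $\mathcal{S}\subseteq\overline{PC(\uh)}=PC_2(\uh)$.

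For the first inclusion, let $(w,h)\in PC(\uh)$, so that $w\in W^{1,\infty}(0,T)$ with $\dot w\in C(\uh)$, $w(0)=0$ and $w(T)=h$. On the interior of any boundary arc the strict complementarity assumption \eqref{strict_complementarity} gives $\Lambda\neq0$, so the defining relation $\Lambda\dot w=0$ of the critical cone forces $\dot w=0$ there; as $w$ is (absolutely) continuous, $w$ is constant on the closure of each boundary arc. Using $w(0)=0$ together with strict complementarity at $t=0$ on an initial boundary arc yields $w\equiv0$ on it, and likewise $w\equiv h$ on a terminal boundary arc from $w(T)=h$. Hence $(w,h)\in\mathcal{S}$, and taking closures gives $PC_2(\uh)\subseteq\mathcal{S}$.

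For the density inclusion, fix $(w,h)\in\mathcal{S}$ and build approximants $(\tilde w_n,\tilde h_n)\in PC(\uh)$. On every boundary arc set $\tilde w_n$ equal to the corresponding constant value of $w$ (namely $0$ on an initial arc and $h$ on a terminal arc); on each singular arc choose a Lipschitz function that is $L^2$-close to $w$ and insert transition layers of width $1/n$ near the two endpoints so that $\tilde w_n$ attains the constant values of the neighbouring boundary arcs, thus producing a globally Lipschitz $\tilde w_n$ with $\tilde w_n(0)=0$; set $\tilde h_n:=\tilde w_n(T)$. By construction $\dot{\tilde w}_n=0$ on boundary arcs, while on the interior of singular arcs one has $\Lambda=0$ and $\uh\in(u_m,u_M)$, so $t\notin I_m(\uh)\cup I_M(\uh)$ and neither the complementarity nor the sign conditions in the definition of $C(\uh)$ are active; hence $\dot{\tilde w}_n\in C(\uh)$ and $(\tilde w_n,\tilde h_n)\in PC(\uh)$. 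The layers affect $\tilde w_n$ only on sets of measure $O(1/n)$, so $\tilde w_n\to w$ in $L^2(0,T)$; if the terminal arc is a boundary arc then $\tilde h_n=h$ exactly, and otherwise a terminal layer forcing $\tilde w_n(T)=h$ gives $\tilde h_n=h$ at $o(1)$ cost in $L^2$, which is precisely the mechanism by which the final value decouples in the closure. This yields $(w,h)\in PC_2(\uh)$.

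The main obstacle is this density step: because membership in $C(\uh)$ forbids $\dot w\neq0$ on boundary arcs, every transition between the prescribed constant levels must be confined to the singular arcs, and one must simultaneously (i) keep the approximants globally continuous across the boundary--singular junctions, (ii) realize the $L^2$-approximation of the free part of $w$ on singular arcs, and (iii) control the vanishing boundary layers so that both $\tilde w_n\to w$ in $L^2$ and $\tilde h_n\to h$. The finite-structure hypothesis \eqref{finite_structures} guarantees only finitely many junctions and layers, and strict complementarity \eqref{strict_complementarity} is what pins $\dot w$ to zero on the whole of each boundary arc, including up to $t=0$ and $t=T$, so that these two assumptions together make both the construction and the forward inclusion exact.
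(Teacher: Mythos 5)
Your overall strategy (show the right-hand side $\mathcal{S}$ is closed, prove $PC(\uh)\subseteq\mathcal{S}$ via strict complementarity, then prove density of $PC(\uh)$ in $\mathcal{S}$ by an explicit construction with transition layers) is the natural one and is essentially the argument behind the reference the paper cites; the forward inclusion and the treatment of singular arcs (where $\Lambda\equiv 0$ and the sign conditions are inactive a.e., so $\dot{\tilde w}_n$ is unconstrained there) are correct.

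There is, however, a genuine gap in the density step at \emph{bang--bang junctions}, i.e. points where a maximal lower boundary arc meets a maximal upper boundary arc (the paper explicitly allows these: it introduces the set $\T_{BB}$ and the hypothesis \eqref{RposTBB} immediately after this proposition). Your construction places every transition layer \emph{inside a singular arc}, which tacitly assumes that any two consecutive boundary arcs are separated by a singular arc. If two boundary arcs $(a,\tau)$ and $(\tau,c)$ are adjacent, then for any $(\tilde w,\tilde h)\in PC(\uh)$ strict complementarity forces $\dot{\tilde w}=0$ a.e.\ on $(a,\tau)\cup(\tau,c)$, and absolute continuity of $\tilde w$ across $\tau$ forces $\tilde w$ to equal a \emph{single} constant on $(a,c)$; the same then holds a.e.\ for every $L^2$-limit. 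Consequently, an element of $\mathcal{S}$ (read as ``constant on each maximal boundary arc'') taking two \emph{different} constants on the two sides of a bang--bang junction cannot lie in $PC_2(\uh)$ --- its $L^2$-distance to any such $\tilde w$ is bounded below --- so your second inclusion $\mathcal{S}\subseteq PC_2(\uh)$ fails for such elements, and your layer construction breaks down precisely because there is no singular arc at $\tau$ in which to realize the jump. To repair the argument you must either (i) read/restate the right-hand side so that ``constant over boundary arcs'' means constant on each maximal interval on which $\uh\in\{u_m,u_M\}$ (equivalently, add the condition that $w$ takes the same constant on the two arcs adjacent to each point of $\T_{BB}$), or (ii) add the structural assumption that boundary and singular arcs alternate ($\T_{BB}=\emptyset$). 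With either fix your construction goes through verbatim; without one, the stated set equality is not what your argument (or any argument) can deliver.
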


\begin{proof}  
Similar to the one of \cite[Lemma 8.1]{ABDL12}. 
\end{proof}

Letting $\T_{BB}$ denote the {\em set of bang-bang junctions,} we 
assume in addition that 
\be
\label{RposTBB}
R(t) >0,\quad t\in \T_{BB}.
\ee

Consider the following positivity condition:
there exists $\alpha>0$ such that
\be
\label{sufcondso}
\Om(\xi[w],w,h) \geq \alpha ( \|w\|^2_2 + h^2),
\quad\text{for all $(w,h) \in PC_2(\uh)$.}
\ee
We say that $\uh$ satisfies a {\em weak quadratic growth condition}
if there exists $\beta>0$ such that for any  $u \in \U_{ad},$
setting $v:=u-\uh$ and $w(t):=\int_0^Tv(s)\dd s,$ we have
\be
\label{sufcondqg}
F(u) \geq F(\uh) + \beta ( \|w\|^2_2 + w(T)^2),
\quad\text{if $\|v\|_1$ is small enough.}
\ee
The word `weak' makes reference to the fact that the growth is
obtained for the $L^2$ norm of $w$, and not the one of $v$. 

\begin{theorem}
Let \eqref{finite_structures}-\eqref{strict_complementarity}
and \eqref{RposTBB} 
hold. Then \eqref{sufcondso} holds iff  the quadratic growth condition
\eqref{sufcondqg}  is satisfied.
\end{theorem}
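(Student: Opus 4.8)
The plan is to prove the two implications separately, reducing both to the Goh-transformed quadratic form through the identity of Lemma~\ref{lem3}, namely $\Q(z[v],v)=\hatQ(\xi[w],w,w(T))$ with $w(t):=\int_0^t v(s)\dd s$, combined with the second-order expansion of Corollary~\ref{cor:expansion2}. Writing $v:=u-\uh$, these give
\be
F(u)=F(\uh)+DF(\uh)v+\tfrac12\hatQ(\xi[w],w,w(T))+O(\|v\|_1^3),
\ee
and recall that $DF(\uh)v=\int_0^T\Lambda(t)v(t)\dd t$.

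For the necessity of \eqref{sufcondso} (growth $\Rightarrow$ positivity) I would proceed directly. Fix $(w,h)\in PC(\uh)$, so $v:=\dot w\in C(\uh)$, $w(0)=0$, $w(T)=h$, and $v\in L^\infty$. For $\tau>0$ small the control $u_\tau:=\uh+\tau v$ is feasible: on a lower (resp.\ upper) boundary arc the sign condition in $C(\uh)$ keeps $u_\tau\ge u_m$ (resp.\ $\le u_M$), and since $\uh$ is interior on singular arcs a small bounded perturbation stays admissible. As $\|u_\tau-\uh\|_1=\tau\|v\|_1\to0$, the growth condition \eqref{sufcondqg} applies and yields $F(u_\tau)-F(\uh)\ge\beta\tau^2(\|w\|_2^2+h^2)$. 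On the other hand $DF(\uh)v=\int_0^T\Lambda v\,\dd t=0$ because $v\in C(\uh)$, so the expansion gives $F(u_\tau)-F(\uh)=\tfrac12\tau^2\hatQ(\xi[w],w,h)+O(\tau^3)$. Dividing by $\tau^2$ and letting $\tau\downarrow0$ produces $\hatQ(\xi[w],w,h)\ge2\beta(\|w\|_2^2+h^2)$; continuity of $\hatQ$ on $W$ and density of $PC(\uh)$ in $PC_2(\uh)$ then extend this to all of $PC_2(\uh)$, which is \eqref{sufcondso} with $\alpha=2\beta$.

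The sufficiency (positivity $\Rightarrow$ growth) is the substantial direction, and I would argue by contradiction. If \eqref{sufcondqg} fails there are feasible $u_k$ with $v_k:=u_k-\uh$, $\|v_k\|_1\to0$, and $F(u_k)-F(\uh)\le\tfrac1k\gamma_k^2$, where $w_k(t):=\int_0^t v_k$ and $\gamma_k^2:=\|w_k\|_2^2+w_k(T)^2$; note $\gamma_k\to0$ since $\|w_k\|_\infty\le\|v_k\|_1\to0$. Normalizing by $\gamma_k$ gives $(\bar w_k,\bar h_k)$ with $\|\bar w_k\|_2^2+\bar h_k^2=1$ and $\hatQ(\xi[w_k],w_k,w_k(T))=\gamma_k^2\hatQ(\xi[\bar w_k],\bar w_k,\bar h_k)$ by homogeneity. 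Using the first-order optimality of $\uh$ (i.e.\ \eqref{first_ord_nec}, which makes $DF(\uh)v_k=\int_0^T\Lambda v_k\ge0$), the boundedness of $\hatQ$ along the normalized sequence, and the remainder estimate discussed below, the expansion forces $\int_0^T\Lambda v_k=O(\gamma_k^2)$; strict complementarity \eqref{strict_complementarity} then bounds the $L^1$-mass of $v_k$ on boundary arcs (away from the finitely many junctions of \eqref{finite_structures}) by $O(\gamma_k^2)=o(\gamma_k)$, so after normalization this mass is $o(1)$ and $\bar w_k$ is asymptotically constant on each boundary arc. Hence $(\bar w_k,\bar h_k)$ lies within $o(1)$ in $L^2\times\RR$ of the set $PC_2(\uh)$ (characterized in the preceding proposition); replacing it by its nearby element of $PC_2(\uh)$ and invoking the coercivity \eqref{sufcondso} gives $\hatQ(\xi[\bar w_k],\bar w_k,\bar h_k)\ge\alpha(\|\bar w_k\|_2^2+\bar h_k^2)-o(1)=\alpha-o(1)$, whereas $F(u_k)-F(\uh)\le\tfrac1k\gamma_k^2$ together with $\int\Lambda v_k\ge0$ gives $\limsup_k\hatQ(\xi[\bar w_k],\bar w_k,\bar h_k)\le0$, a contradiction.

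The main obstacle is the control of the remainder in the sufficiency argument. Corollary~\ref{cor:expansion2} only provides a remainder $O(\|v_k\|_1^3)$, whereas the contradiction is drawn at the scale $\gamma_k^2$, and $\|v_k\|_1$ need not be comparable to $\gamma_k$: a rapidly oscillating $v_k$ can have $\gamma_k\to0$ with $\|v_k\|_1^3/\gamma_k^2\not\to0$. The cure, exactly as in \cite{aronna:hal-01273496,Bonnans:2013}, is to re-estimate the remainder after the Goh transform, integrating by parts so that the dependence on $v_k$ is replaced by dependence on $w_k$; the bounds on the transformed linearized state $\xi[w]$ (the analogue of Lemma~\ref{Lemmaestz}) then show the remainder is genuinely $o(\gamma_k^2)$. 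In passing to limits in the cross terms of $\hatQ$ one uses the compactness hypothesis \eqref{comp-hyp-xi} (so that $\xi[\bar w_k]$ converges strongly), while the sign information $R\ge0$ on singular arcs (a necessary condition already contained in Lemma~\ref{lem3}) together with $R>0$ on the bang-bang junctions \eqref{RposTBB} controls the part of $\bar w_k$ concentrating near junctions, where strict complementarity gives no estimate. The remaining steps are the routine complex-setting adaptations of the real-space arguments of \cite{aronna:hal-01273496}.
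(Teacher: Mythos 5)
Your overall architecture is the same as the one the paper relies on: the printed proof is only a citation of \cite[Thm 4.5]{aronna:hal-01273496}, and your sketch reproduces that reference's strategy (Goh transform, expansion of $F$, necessity by feasible directional perturbations in $PC(\uh)$ plus density, sufficiency by contradiction with normalization by $\gamma_k^2=\|w_k\|_2^2+w_k(T)^2$). You also correctly isolate the one genuinely delicate technical point, namely that the remainder $O(\|v\|_1^3)$ of Corollary \ref{cor:expansion2} is useless at the scale $\gamma_k^2$ and must be re-estimated after the Goh transform so that it becomes $o(\gamma_k^2)$; that is indeed the heart of the cited proof.

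There is, however, a genuine gap in your concluding step for the sufficiency direction. You claim that $(\bar w_k,\bar h_k)$ lies within $o(1)$ in $L^2\times\RR$ of $PC_2(\uh)$ and then apply \eqref{sufcondso} to a nearby element. Strict complementarity only gives $\int|v_k|=O(\gamma_k^2)$ on compact subsets of the interior of each boundary arc (where $|\Lambda|$ is bounded below); in a neighbourhood of a junction $\Lambda$ may vanish, so nothing prevents $v_k$ from placing a spike there whose normalized primitive $\bar w_k$ carries an $L^2$ mass of order one concentrated near the junction. Such a $\bar w_k$ is \emph{not} close in $L^2$ to any function constant on the boundary arcs, so the strong-approximation step fails exactly in the regime that hypothesis \eqref{RposTBB} is designed to handle. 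The correct assembly — the one in the cited reference — extracts a weak $L^2\times\RR$ limit $(\bar w,\bar h)$, shows it belongs to $PC_2(\uh)$ (the strong convergence on compact subsets of boundary arcs suffices for this), and then splits $\hatQ$ into the terms that are weakly continuous thanks to the compactness \eqref{comp-hyp-xi} and the term $\int R\,\bar w_k^2$, whose $\liminf$ is controlled by weak lower semicontinuity where $R\ge 0$ (singular arcs, via Lemma \ref{lem3}) and by positivity of $R$ near the bang--bang junctions; this is what rules out both a nonzero limit violating \eqref{sufcondso} and a vanishing limit with mass escaping to the junctions. You list all of these ingredients in your closing paragraph, but the contradiction as you actually draw it does not use them and would not survive the concentrating counterexample; the argument needs to be rebuilt around the weak limit rather than around norm-approximation by $PC_2(\uh)$.
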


\begin{proof}
Similar to the one in 
\cite[Thm 4.5]{aronna:hal-01273496}. 
\end{proof} 

\vspace{20mm}
\section{Application to Schr\"odinger equation}\label{sec:applschr}

\subsection{Statement of the problem} The equation is formulated first in an informal way.
Let $\Omega\subset \RR^n$, $n\in \NN$, open and bounded, and $T>0$.
The state equation, with $\Psi=\Psi(t,x)$, is
\be
\left\{
\begin{aligned}
  \label{Schroedinger_equ}
  \dot\Psi(t,x) - i 
\sum_{j,k=1}^n \frac{\partial}{\partial x_k} 
\left[a_{jk}(x) \frac{\partial \Psi(t,x)}{\partial x_j}  \right]
& =- i u b_2 \Psi(t,x) + f 
&&\text{in } (0,T) \times \Om,\\
    \Psi(0,x)&=\Psi_0&& \text{in } \Omega, \\
    \Psi(t,x) &=0  && \text{on } (0,T) \times \partial \Om
  \end{aligned}
  \right.
\ee
with 
\begin{equation}
\begin{aligned}\label{assump_Schroed}
  &\Psi_0\in \Vb, \;\;
  b_2^k \in W^{2,\infty}_0(\Omega),\; k=1,2,
  &f \in L^2(0,T; \Vb) \cap C(0,T;\Hb)\\ 
\end{aligned}
\end{equation}
 and the complex-valued spaces $\Hb := L^2(\Omega;\cC)$ 
and $\Vb:= H^1_0(\Omega;\cC)$.
Note that although $f$ is normally equal to zero, it is
useful to introduce it since the sensitivity of the solution
w.r.t. the r.h.s., that plays a role in the numerical analysis.
Here the $a_{jk}$ are $C^1$ functions over 
$\bar\Om$ that satisfy, for each $x\in \bar\Om,$
the symmetry hypothesis 
$a_{jk} = a_{kj}$ for all $j$, $k$
as well as the following coercivity hypothesis, that for some $\nu>0$:
\be
\sum_{j,k=1}^n a_{jk}(x) \xi_j \xi_k \ge \nu |\xi|^2,\quad 
\text{for all $\xi\in \CC^n$, $x\in\Om$.}
\ee
 We apply the abstract setting
with $\Hbc=\Hb$.
Consider the unbouded operator in $\Hb$
defined by 
\be
\label{A_elliptic-r}
(\A_0 \Psi)(t,x):= -
\sum_{j,k=1}^n \frac{\partial}{\partial x_k} 
\left[a_{jk}(x) \frac{\partial \Psi(t,x)}{\partial x_j}  \right],
\quad (t,x)\in (0,T)\times \Om,
\ee
with domain 
$\dom( \A_0):= \Hb^2(\Om)\cap \Vb$, where $\Hb^2(\Om)$ denotes the complex valued Sobolev space $H^2(\Om,\CC)$.
One easily checks that this operator is self-adjoint,
i.e., equal to the conjugate transpose.
\if{ defined by 
\be
\la \A^*_0\Psi, \Phi\ra_{\Hb} = 
\la \Psi , \A_0 \Phi\ra_{\Hb},
\quad \text{for all $\Psi\in \dom(\A_0)$}.
\ee
with domain 
\be
\dom(A^*_0) := \{ \Phi \in \Hb; \;\;
\text{for some $c>0$: }
| (\A_0 \Psi, \Phi)_{\Hb} | \leq c \| \Psi \| \text{ for all }\Psi \in \dom(\A_0)\}.
\ee
} \fi 
The PDE \eqref{Schroedinger_equ} enters in the 
semigroup framework, with generator 
\be
\label{A_elliptic}
(\A_{\Hb} \Psi):= i \A_0 \Psi,
\quad \text{for all $\Psi \in \Hb$.}
\ee

\begin{lemma}
The operator $\A_{\Hbc}$, with domain 
$\dom( \A_{\Hbc} ):= \Hb^2(\Om)\cap \Vb$, 
is the generator of a unitary semigroup and
\eqref{Schroedinger_equ} 
has a semigroup solution $\Psi \in C(0,T;\Hb)$.
\end{lemma}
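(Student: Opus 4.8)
The plan is to realize the unitary semigroup via Stone's theorem applied to the self-adjoint operator $\A_0$, and then to read off existence of the state as an instance of the abstract well-posedness of Section~\ref{sec:abset}. First I would make precise the self-adjointness asserted below \eqref{A_elliptic-r}. Introduce the sesquilinear form $a(\Psi,\Phi):=\int_\Om\sum_{j,k=1}^n a_{jk}(x)\,\partial_j\Psi\,\overline{\partial_k\Phi}\,\dd x$ on $\Vb=H^1_0(\Om;\CC)$. Integrating by parts and using the homogeneous Dirichlet condition, one has $\la\A_0\Psi,\Phi\ra_{\Hb}=a(\Psi,\Phi)$ for $\Psi\in\dom(\A_0)$ and $\Phi\in\Vb$; since the $a_{jk}$ are real with $a_{jk}=a_{kj}$, the form satisfies $a(\Psi,\Phi)=\overline{a(\Phi,\Psi)}$, whence $\la\A_0\Psi,\Phi\ra_{\Hb}=\overline{\la\A_0\Phi,\Psi\ra_{\Hb}}=\la\Psi,\A_0\Phi\ra_{\Hb}$ and $\A_0$ is symmetric. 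The form $a$ is moreover bounded and, by the coercivity hypothesis together with Poincar\'e's inequality, coercive on $\Vb$; the representation theorem for bounded coercive Hermitian forms produces a self-adjoint operator associated with $a$, and elliptic regularity identifies its domain with $\Hb^2(\Om)\cap\Vb$. Hence $\A_0=\A_0^*$ on $\dom(\A_0)=\Hb^2(\Om)\cap\Vb$.

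Next I would invoke Stone's theorem: as $\A_0$ is self-adjoint, $e^{-it\A_0}$, $t\in\RR$, is a strongly continuous one-parameter group of unitary operators on $\Hb$. In the sign convention of \eqref{def-domA} its generator is exactly $\A_{\Hb}=i\A_0$ from \eqref{A_elliptic}, with domain $\Hb^2(\Om)\cap\Vb$, so that $e^{-t\A_{\Hb}}=e^{-it\A_0}$. Unitarity yields $\|e^{-t\A_{\Hb}}\|_{\L(\Hb)}=1$, i.e. \eqref{cond:bounded} holds with $c_{\A}=1$ and $\lambda_{\A}=0$; restricting to $t\ge0$ gives the announced unitary semigroup, and $\A_{\Hbc}=\A_{\Hb}$ since $\Hbc=\Hb$ here.

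It then remains to cast \eqref{Schroedinger_equ} in the abstract form \eqref{semg1}. Using $\A_0\Psi=-\sum_{j,k}\partial_k[a_{jk}\partial_j\Psi]$, the second-order term of \eqref{Schroedinger_equ} equals $i\A_0\Psi=\A_{\Hb}\Psi$, so the equation reads $\dot\Psi+\A_{\Hb}\Psi=f+u\,\B_2\Psi$ with $\B_1=0$ and $\B_2$ the multiplication operator $\Psi\mapsto-ib_2\Psi$. The hypotheses \eqref{semg1-hyp} follow from \eqref{assump_Schroed}: indeed $\Psi_0\in\Vb\subset\Hb$, $f\in L^2(0,T;\Vb)\cap C(0,T;\Hb)\subset L^1(0,T;\Hb)$, $\B_1=0\in\Hb$, $u\in\Uad\subset L^\infty(0,T)\subset L^1(0,T)$, and, as $b_2\in W^{2,\infty}_0(\Om)\subset L^\infty(\Om)$, multiplication by $-ib_2$ is bounded on $\Hb$, so $\B_2\in\L(\Hb)$. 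Consequently the fixed-point equation \eqref{semPsi} is well posed and Theorem~\ref{weaksense-thm} furnishes the unique semigroup solution $\Psi\in C(0,T;\Hb)$.

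The main obstacle is the self-adjointness claim: the symmetry of $\A_0$ is an immediate integration by parts, but upgrading it to genuine self-adjointness with the precise domain $\Hb^2(\Om)\cap\Vb$ rests on the elliptic regularity and the coercive-form representation theorem used above. Once $\A_0=\A_0^*$ is secured, Stone's theorem and the verification of \eqref{semg1-hyp} are routine.
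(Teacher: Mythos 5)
Your proof is correct, and it reaches the conclusion by a slightly different route than the paper. The paper's own argument is a two-line sketch: it invokes the Hille--Yosida characterization (with $M=1$, $\omega=0$) to get a contraction semigroup, and then observes that $\A_{\Hbc}=i\A_0$ is the opposite of its conjugate transpose (skew-adjoint), so the semigroup is norm preserving. You instead go straight through Stone's theorem applied to the self-adjoint $\A_0$, obtaining the unitary group $e^{-it\A_0}$ directly. The two arguments hinge on exactly the same structural fact --- self-adjointness of $\A_0$, equivalently skew-adjointness of $i\A_0$ --- so mathematically they are close cousins; what your version buys is that it makes explicit the two ingredients the paper leaves implicit, namely (a) why $\A_0$ is genuinely self-adjoint (form representation theorem plus elliptic regularity, not merely the symmetry $\la\A_0 y,z\ra=\la y,\A_0 z\ra$ that the paper records in \eqref{a0yz}), and (b) the verification of \eqref{semg1-hyp} needed to invoke the abstract well-posedness of \eqref{semPsi} for the second assertion of the lemma, which the paper does not spell out at all.

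One caveat worth flagging, which you partially acknowledge and which the paper shares: identifying the domain of the self-adjoint realization of $\A_0$ with $\Hb^2(\Om)\cap\Vb$ requires $H^2$ elliptic regularity up to the boundary, hence some smoothness (or convexity) of $\partial\Om$ beyond the standing assumption that $\Om$ is merely open and bounded. Without that, the operator with domain $\Hb^2(\Om)\cap\Vb$ is only symmetric, and the generator supplied by the form method (or by Stone's theorem) has a possibly larger domain. Since the lemma as stated prescribes this domain, the gap lives in the statement rather than in your argument, but it would be worth recording the boundary-regularity hypothesis explicitly.
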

\begin{proof}
That  $\A_{\Hbc}$ is the generator of a contracting semigroup
follows from 
the Hille Yosida characterization 
with $M=1$, $n=1$ and $\omega=0$.
The operator $\A_{\Hbc}$ being the opposite of its conjugate transpose 
it follows that the semigroup is norm preserving.
\end{proof}
We define then the following sesquilinear form over $\Vb$:
\be
a(y,z) := \sum_{j,k=1}^n \int_\Om 
a_{jk}(x)  \frac{\partial y }{\partial x_j } 
 \frac{\partial \zb}{\partial x_k } \dd x,
\quad \text{for all $y$, $z$ in $\Vb$},
\ee
which is self-adjoint in the sense that
\be
\overline{a(y,z)} = a(z,y).
\ee
Furthermore, for $y$, $z$ in $\dom(\A_0)$
we have that 
\be
\label{a0yz}
\la  \A_0 y,z \ra_{\Hb} = a(y,z) 
= 
\overline{a(z,y)} = \la  y, \A_0 z \ra_{\Hb}.
\ee
so that is $\A_0$ also self-adjoint.

 \subsection{Link to variational setting and regularity for Schr\"odinger equation}
 
We introduce the function space
 \be
  \label{spaceXa}
\calx:=
L^\infty(0,T;\Vb)\cap H^1(0,T; \Vb'),
  \ee
endowed with the natural norm
\be
\|\Psi\|_{\calx} :=
\| \Psi\|_{L^\infty(0,T;\Vb)}
+
\| \Psi\|_{H^1(0,T; \Vb')}.
\ee

There holds the following weak convergence result.
\begin{lemma}
\label{lemwcv}
Let $\Psi_k$ be a bounded sequence in 
$\calx$. Then there exists $\Psi\in \calx$ 
such that a subsequence of $\Psi_k$ 
converges  to $\Psi$ strongly in $L^2(0,T;\lc)$,
and weakly in $L^2(0,T;\Vb)$,
and $H^1(0,T; \Vb')$.
Finally, if $u_k$ weakly$*$ converges to 
$u$ in $L^\infty(0,T)$, then 
\be
\label{lemwcv1} 
u_k b_2 \Psi_k \rar u b_2 \Psi
\quad \text{weakly in  $L^2(0,T;\bar H)$}
\ee
\end{lemma}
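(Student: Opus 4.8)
The plan is to handle the two assertions in turn: first the compactness of $\{\Psi_k\}$ via an Aubin--Lions--Simon argument, and then the convergence of the products $u_k b_2\Psi_k$ by a weak-times-strong argument.

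For the first part I would work with the Gelfand triple $\Vb\hookrightarrow\Hb\hookrightarrow\Vb'$, where the embedding $\Vb\hookrightarrow\Hb$ is compact (Rellich--Kondrachov on the bounded set $\Omega$) and $\Hb\hookrightarrow\Vb'$ is continuous and dense. Boundedness of $\Psi_k$ in $\calx$ gives boundedness in $L^\infty(0,T;\Vb)$, hence in $L^2(0,T;\Vb)$ since $(0,T)$ is finite, together with boundedness of $\dot\Psi_k$ in $L^2(0,T;\Vb')$. As $\Vb$ is reflexive, $L^2(0,T;\Vb)$ is reflexive and $L^\infty(0,T;\Vb)=\big(L^1(0,T;\Vb')\big)'$, so after extracting subsequences I obtain a limit $\Psi$ with $\Psi_k\rightharpoonup\Psi$ in $L^2(0,T;\Vb)$, weakly-$*$ in $L^\infty(0,T;\Vb)$, and weakly in $H^1(0,T;\Vb')$; these limits coincide by uniqueness in $\mathcal D'(0,T;\Vb')$. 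The Aubin--Lions--Simon lemma, applied to the triple above, then gives relative compactness of $\{\Psi_k\}$ in $L^2(0,T;\Hb)$, so a further extraction yields $\Psi_k\to\Psi$ strongly in $L^2(0,T;\Hb)$, and the weak-$*$ bound shows $\Psi\in\calx$.

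For the second part I would use that $b_2\in W^{2,\infty}_0(\Omega)$ is real and bounded, so multiplication by $b_2$ is continuous on $L^2(0,T;\Hb)$ and therefore $b_2\Psi_k\to b_2\Psi$ strongly in $L^2(0,T;\Hb)$. Writing
\be
u_k b_2\Psi_k - u b_2\Psi = u_k\,(b_2\Psi_k-b_2\Psi) + (u_k-u)\,b_2\Psi,
\ee
I would bound the first term in $L^2(0,T;\Hb)$ by $\|u_k\|_{L^\infty(0,T)}\,\|b_2\Psi_k-b_2\Psi\|_{L^2(0,T;\Hb)}$, which tends to $0$ because weakly-$*$ convergent sequences are bounded in $L^\infty(0,T)$; thus this term vanishes strongly, a fortiori weakly. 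For the second term, pairing against an arbitrary $\phi\in L^2(0,T;\Hb)$ and pulling out the real scalar $u_k-u$ reduces the duality to $\int_0^T (u_k-u)\,\la b_2\Psi,\phi\ra_{\Hb}\,\dd t$; since $t\mapsto\la b_2\Psi(t),\phi(t)\ra_{\Hb}$ lies in $L^1(0,T)$ by Cauchy--Schwarz (splitting into real and imaginary parts if needed), the weak-$*$ convergence $u_k\to u$ in $L^\infty(0,T)$ forces this integral to $0$. Hence $(u_k-u)b_2\Psi\rightharpoonup0$ weakly in $L^2(0,T;\Hb)$, and adding the two estimates yields \eqref{lemwcv1}.

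The main obstacle is the first part: one must invoke the Aubin--Lions--Simon compactness theorem correctly for the triple $\Vb$, $\Hb$, $\Vb'$ and check that the weak limits in the three topologies genuinely agree. Once strong $L^2(0,T;\Hb)$ convergence is secured, the product step is the routine weak-times-strong argument sketched above.
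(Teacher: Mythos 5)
Your proposal is correct and follows essentially the same route as the paper: Aubin--Lions compactness for the triple $\Vb\hookrightarrow\Hb\hookrightarrow\Vb'$ to get strong $L^2(0,T;\Hb)$ convergence, weak extraction in the reflexive spaces $L^2(0,T;\Vb)$ and $H^1(0,T;\Vb')$, and the standard weak-times-strong decomposition for the product $u_k b_2\Psi_k$. The only cosmetic difference is how the limit is placed back in $\calx$ (you use weak-$*$ sequential compactness of $L^\infty(0,T;\Vb)=(L^1(0,T;\Vb'))'$, while the paper observes that the closed ball of $L^\infty(0,T;\Vb)$ is a closed convex, hence weakly closed, subset of $L^2(0,T;\Vb)$); both are valid.
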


\begin{proof}
By the Aubin-Lions lemma \cite{MR0152860},
$\calx$ is compactly embedded into 
$L^2(0,T;\Hb)$. Thus, 
extracting a subsequence if necessary, we may assume 
that $\Psi_k$ converges in $L^2(0,T;\lc)$ to some 
$\Psi$. Since $\Psi_k$ is bounded in the 
Hilbert spaces $L^2(0,T;\Vb)$
and $H^1(0,T; \Vb')$, re-extracting a subsequence 
if necessary, we may assume that it also
weakly converges in these spaces. 

Let $C_R$ denote the closed ball of 
$L^\infty(0,T,\Vb)$ of radius $R$. 
This is a closed subset of 
$L^2(0,T,\Vb)$ that, for large enough $R$,
contains the sequence $\Psi_k$. 
Since any closed convex set is weakly closed, 
$\Psi \in C_R$. 
 Thus $\Psi \in \X.$
That \eqref{lemwcv1} holds follows from the 
joint convergence of 
$u_k$ in $L^\infty(0,T)$
(endowed with the weak$*$ topology),
and of $\Psi_k$ in $L^2(0,T;\bar H)$.
\end{proof}

The variational solution of \eqref{Schroedinger_equ} is given as
$\Psi\in \calx$ satisfying, for a.a. $t\in (0,T)$:
\be
\label{eq-vf}
\la \dot \Psi(t) , z \ra_{\Vb} + i a( \Psi(t), z) 
+ iu(t) \la b_2\Psi,z\ra_{\Hb} = \la f(t), z \ra_{\Vb}
 \text{ for all $z\in \Vb$},
\ee
and $\Psi(0)=\Psi_0 \in \Vb$.

For $(f,b_2,u,\Psi_0)\in L^2(0,T;\Vb)\times W^{1,\infty}(\Omega)\times L^{\infty}(\Om) \times \Vb $ we
set 
\be
\ba{lll}
  \kappa[f,b_2,u,\Psi_0]= &
\norm{f}{L^1(0,T; \Vb) }^2 +
  \norm{\Psi_0}{\Vb }^2 + 
\\ & 
\norm{u}{L^\infty(0,T)}^2\norm{\nabla b_2}{L^\infty(\Om)}^2 (\norm{f}{L^2(0,T;\Hb)}^2 + \norm{\Psi_0}{\Vb}^2) .
\ea  \ee
There holds the following existence and regularity result for the
unique solution of
\eqref{eq-vf} 
 (cf.~\cite{LionsMagenes:1972}).

\begin{theorem}
\label{thm:reg}
  Let $(f,b_2,u,\Psi_0)\in L^2(0,T;\Vb)\times W^{1,\infty}(\Omega)\times L^{\infty}(\Om) \times \Vb $. 
  Then there exists 
$c_0>0$ independent of $(f, b_2, u, \Psi_0)$ 
such that 
\eqref{eq-vf} 
has a unique solution $\Psi$ in $\calx$, that satisfies the estimates
  \begin{gather}
    \label{L2_estimate}
    \|\Psi\|_{C(0,T;\lc)} \leq
    c_0 \big(\norm{f}{L^1(0,T;\Hb)} + \norm{\Psi_0}{\lc}
    \big),
\\
    \label{str_estimate}
    \|\Psi\|_{C(0,T;\Vb)} +
    \|\dot\Psi(t)\|_{L^2(0,T;\Vb')} \leq c_0 \kappa[f,b_2,u,\Psi_0].
  \end{gather}
\end{theorem}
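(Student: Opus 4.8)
The plan is to build the solution by a Galerkin scheme, derive the two estimates from energy identities that exploit the skew-adjoint structure of the Schr\"odinger operator, pass to the limit by means of Lemma \ref{lemwcv}, and deduce uniqueness from the first estimate applied to the difference of two solutions. Concretely, I would fix a basis $\{w_j\}$ of $\Vb$ made of eigenfunctions of $\A_0$ (orthogonal in both $\Hb$ and $\Vb$) and look for $\Psi_m(t)=\sum_{j=1}^m c^m_j(t)w_j$ solving the projection of \eqref{eq-vf} onto $\mathrm{span}\{w_1,\dots,w_m\}$. As $u\in L^\infty(0,T)$, this is a linear system of ODEs with bounded measurable coefficients, hence uniquely solvable with absolutely continuous coefficients.

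For the $\Hb$-estimate \eqref{L2_estimate} I would test with $z=\Psi_m(t)$ and take real parts. The point is that $i\,a(\Psi_m,\Psi_m)$ and $iu\la b_2\Psi_m,\Psi_m\ra_{\Hb}$ are purely imaginary: the first because $a(\Psi_m,\Psi_m)\in\RR$ by self-adjointness of $a$, the second because $b_2$ is real, so $\la b_2\Psi_m,\Psi_m\ra_{\Hb}=\int_\Om b_2|\Psi_m|^2\in\RR$. Hence taking $\Re$ leaves $\half\frac{\dd}{\dd t}\norm{\Psi_m}{\Hb}^2=\Re\la f,\Psi_m\ra_{\Hb}\le\norm{f}{\Hb}\norm{\Psi_m}{\Hb}$, whence $\frac{\dd}{\dd t}\norm{\Psi_m}{\Hb}\le\norm{f}{\Hb}$ and, after integration, the $m$-uniform bound $\norm{\Psi_m(t)}{\Hb}\le\norm{\Psi_0}{\Hb}+\norm{f}{L^1(0,T;\Hb)}$.

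The strong estimate \eqref{str_estimate} is the heart of the matter. Here I would test with $z=\A_0\Psi_m(t)$, which is legitimate since $\A_0\Psi_m\in\mathrm{span}\{w_j\}$. Using $\la\dot\Psi_m,\A_0\Psi_m\ra=a(\dot\Psi_m,\Psi_m)$ one gets $\Re\la\dot\Psi_m,\A_0\Psi_m\ra=\half\frac{\dd}{\dd t}a(\Psi_m,\Psi_m)$, while $a(\Psi_m,\A_0\Psi_m)=\norm{\A_0\Psi_m}{\Hb}^2\in\RR$ again removes the principal part. The only surviving contribution of the potential is the commutator $[\A_0,b_2]$: expanding $a(b_2\Psi_m,\Psi_m)$ by the Leibniz rule, the ``diagonal'' piece $\sum_{jk}\int a_{jk}b_2\,\partial_j\Psi_m\,\partial_k\bar\Psi_m$ is real and therefore drops out of the real part, leaving only the cross term $\sum_{jk}\int a_{jk}(\partial_jb_2)\Psi_m\,\partial_k\bar\Psi_m$, which is controlled by $C\norm{\nabla b_2}{L^\infty}\norm{\Psi_m}{\Hb}\norm{\Psi_m}{\Vb}$. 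Combined with $\Re\la f,\A_0\Psi_m\ra=\Re\,a(f,\Psi_m)\le C\norm{f}{\Vb}\norm{\Psi_m}{\Vb}$, the coercivity $a(\Psi_m,\Psi_m)\ge\nu\norm{\Psi_m}{\Vb}^2$, Young's inequality, the $\Hb$-bound already obtained, and Gronwall's lemma, this produces an $m$-uniform bound $\norm{\Psi_m}{C(0,T;\Vb)}^2\le c_0\,\kappa[f,b_2,u,\Psi_0]$, the appearance of the product $\norm{u}{L^\infty}^2\norm{\nabla b_2}{L^\infty}^2$ in $\kappa$ being exactly the trace of this commutator term. Reading $\dot\Psi_m=f-i\A_0\Psi_m-iub_2\Psi_m$ off the equation and using $\norm{\A_0\Psi}{\Vb'}\le C\norm{\Psi}{\Vb}$ together with $\Hb\hookrightarrow\Vb'$ then bounds $\dot\Psi_m$ in $L^2(0,T;\Vb')$.

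These $m$-uniform bounds keep $\{\Psi_m\}$ in a bounded subset of $\calx$, so Lemma \ref{lemwcv} yields a subsequence converging to some $\Psi\in\calx$, strongly in $L^2(0,T;\Hb)$ and weakly in $L^2(0,T;\Vb)$ and $H^1(0,T;\Vb')$; this is enough to pass to the limit in the linear term $ub_2\Psi_m$, to recover \eqref{eq-vf} and the initial condition, and, by weak lower semicontinuity, to transfer the two estimates to $\Psi$. Uniqueness follows by applying the $\Hb$-estimate to the difference of two solutions with $f=0$ and $\Psi_0=0$; this step, like the continuity $\Psi\in C(0,T;\Hb)$, rests on the standard Lions--Magenes lemma stating that $t\mapsto\norm{\Psi(t)}{\Hb}^2$ is absolutely continuous with $\frac{\dd}{\dd t}\half\norm{\Psi}{\Hb}^2=\Re\la\dot\Psi,\Psi\ra$ whenever $\Psi\in\calx$. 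I expect the strong estimate---specifically, isolating the commutator $[\A_0,b_2]$ and verifying that its diagonal part leaves the real part---to be the only genuine obstacle, the remainder being the routine energy machinery for linear evolution equations.
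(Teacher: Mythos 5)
Your proposal follows essentially the same route as the paper: a Faedo--Galerkin scheme in the eigenbasis of $\A_0$, the $\Hb$-estimate by testing with $\Psi_m$ and taking real parts, the $\Vb$-estimate by testing with $\A_0\Psi_m$ and exploiting self-adjointness so that only the commutator with $b_2$ survives, the $L^2(0,T;\Vb')$ bound on $\dot\Psi_m$ from the equation, and passage to the limit via Lemma \ref{lemwcv}. Your explicit isolation of the cross term $\sum_{j,k}\int a_{jk}(\partial_j b_2)\Psi_m\,\partial_k\bar\Psi_m$ (the paper's display \eqref{testDeltaPsi2} leaves this implicit) is exactly what produces the factor $\norm{u}{L^\infty}^2\norm{\nabla b_2}{L^\infty}^2$ in $\kappa$, so the argument is correct and complete.
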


\begin{proof}
Since $\Om$ is bounded, there exists a
Hilbert basis of $H^1_0(\Om)$ 
$(w_j,\lambda_j)$, $j \in \cN$ of (real) 
eigenvalues and 
nonnegative eigenvectors of the operator 
$\A_0$
(with, by the definition, homogeneous Dirichlet conditions), i.e
\be
  \label{basewj}
  - \sum_{j,k=1}^n \frac{\partial}{\partial x_k} 
\left[a_{jk}(x) \frac{\partial w_j(x)}{\partial x_j}  \right]
 = \lambda_j w_j(x),\quad j=1,\cdots,
w_j \in H^1_0(\Om), \quad \lambda_j \in \RR_+.
  \ee
Consider the associated Faedo-Galerkin discretization method;
that is, let 
$\{ \Vb_k \}$ be the finite dimensional subspaces of $\Vb$
generated by the (complex combinations of the)
$w_j$, for $j\leq k$.  
The corresponding approximate solution 
$\Psi_k(t)=\sum_{j=1}^k \psi^j_k(t) w_j$ 
of \eqref{Schroedinger_equ},
with $\psi^j_k(t) \in \CC,$ 
 is defined as the solution of
  \be
  \label{Psiktest1-a}
\la \dot \Psi_k(t),  w_j \ra_{\Hb} + i a(\Psi_k(t),  w_j) 
+iu(t)\la b_2\Psi_k(t), w_j \ra_{\Hb} =
\la f(t), w_j \ra_{\Hb},
\ee
for $j=1,\dots,k$ and $t\in [0,T]$,
with initial condition
\be\label{ode0}
\psi_k^j(0)= (\Psi_0,w_j),\quad \text{for } j=1,\dots,k.
\ee
For each $k\in \cN,$ the above equations  are a
system of linear ordinary differential equations that has a unique
solution 
$\psi_k=(\psi_k^1,\dots,\psi_k^k)\in C(0,T; \CC^k)$. 
It follows that for any 
$\Phi(t) = \sum_{j=1}^k \phi^j(t) w_j$ 
(where $\phi^j(t) \in L^1(0,T)$ for $j=1,\dots,k$)
we have that 
  \be
  \label{Psiktest1}
\la \dot \Psi_k(t),  \Phi(t) \ra_{\Hb} +  i a(\Psi_k(t),  \Phi(t)) 
+iu(t) \la b_2\Psi_k(t), \Phi(t) \ra_{\Hb} = 
\la f(t), \Phi(t)\ra_{\lc},
\ee
We derive {\em a priori} estimates by using different test
functions $\Phi$. 

\begin{enumerate}
  \item Testing with $\Phi(t)= \Psi_k(t)$ gives 
    \be
    \label{reg2}
    \la \dot\Psi_k(t),\Psi_k(t) \ra_{\Hb} + i a(\Psi_k(t), \Psi_k(t)) 
+ iu(t) \la b_2\Psi_k(t), \Psi_k(t) \ra_{\Hb} =
    \la f(t),\Psi_k(t) \ra_{\lc}.
    \ee
Taking the real part in both sides in \eqref{reg2} we obtain
    \be
    \half \frac{\dd}{\dd t} \|\Psi_k(t)\|_{\lc}^2  \leq C_1
    \|f(t)\|_{\lc}\|\Psi_k(t)\|_{\lc} \le C_2(\|f(t)\|_{\lc}^2+\|\Psi_k(t)\|_{\\\lc}^2). 
    \ee
    By Gronwall's inequality we get the following estimate
    \be
\label{L2_estimatek}
    \|\Psi_k\|^2_{L^\infty(0,T;\lc)} \leq 
    C_3(\norm{f}{L^1( 0,T;\lc)}^2 + \|\Psi_k(0)\|^2_{\lc}).
    \ee

  \item 
Testing with 
$\Phi(t)= \sum_{j=1}^k \lambda_j \psi^j_k(t) w_j = \A_0 \Psi_k(t)$
gives 
    \be
    \la \dot \Psi_k(t),\A_0 { \Psi_k}(t) \ra_{\Hb} 
+  i a(\Psi_k(t), \A_0 \Psi_k(t) )
+ iu(t)(b_2\Psi_k(t)-f(t),\A_0 {\Psi_k}(t) )_{\lc} = 0.
    \ee
Applying \eqref{a0yz} (in both directions) we get
  \be
\label{iazerohdkl}
    i  \la \A_0 \Psi_k(t), \A_0 \Psi_k(t) \ra_{\Hb}
 +
a( \dot \Psi_k(t) ,  \Psi_k(t) )
+ iu(t) b_2\Psi_k(t)-f(t)    ) = 0.
    \ee
Since $a(\cdot,\cdot)$ is self-adjoint we have that
\be
\ba{lll}
\ddt a(  \Psi_k(t) , \Psi_k(t) )
&=&
a(  \Psi_k(t) , \dot \Psi_k(t) )
+
a(  \dot \Psi_k(t) , \Psi_k(t) )\\
&=&
2 \Re \left(a(  \Psi_k(t) , \dot \Psi_k(t) ) \right).
\ea
\ee
So, taking real parts in \eqref{iazerohdkl}  we get
using Young's inequality and the coercivity of $a(\cdot,\cdot)$
over $\Vb$: 
    \be
    \label{testDeltaPsi2}
\ba{lll}
\half \ddt a(  \Psi_k(t) , \Psi_k(t) )&= - \Re\left(
a(  \Psi_k(t) ,  iu(t) b_2\Psi_k(t)-f(t)    ) \right)
\\ & \leq
c \| \Psi_k(t) \|_{\Vb} (\| \Psi_k(t) \|_{\Vb} + \| f(t) \|_{\Vb} )
\\ & \leq
c' ( a(  \Psi_k(t) , \Psi_k(t) ) +  \| f(t) \|_{\Vb} ).
\ea\ee
So, by Gronwall's estimate and using 
\eqref{L2_estimatek}:
\begin{equation}
\label{str_estimatek}
  \begin{aligned}
    \|\Psi_k\|_{L^{\infty}(0,T;\hc)} &\leq  c_0 \kappa[f,b_2,u,\Psi_0].
  \end{aligned}
\end{equation}

  \item 
Any $\Phi\in\hc$ can be written as 
    $ \Phi=\Phi^1+\Phi^2$ with $\Phi^1 \in \Vb_j$
and
$\Phi^2$ orthogonal to $\Vb_j$ in both spaces 
$\lc$ and $\Vb$.
 Recall the notation for the dual and antidual pairing introduced in Section \ref{sec:notation}. Then 
\be
\langle   \dot \Psi_k(t),\Phi \rangle_{\Vb} 
=\la \dot \Psi_k(t),\Phi\ra_{\Hb}
=\la \dot\Psi_k(t),{\Phi}^1\ra_{\Hb}=
\langle  \dot \Psi_k(t),\Phi^1\rangle_{\Vb}.
\ee
It follows from \eqref{Psiktest1} that
there exists $c''>0$ such that, when $\|\Phi\|_{\Vb} \leq 1,$
\be
\langle   \dot \Psi_k(t),\Phi\rangle_{\Vb} 
 \leq 
c'' \left( 
\norm{ \Psi_k(t)}{\Vb}  +
      \norm{u}{L^{\infty}(0,T)}
      \norm{b_2}{L^{\infty}(\Omega)}
      \norm{\Psi_k(t)}{\lc}      + \norm{f(t)}{\lc}
\right).
    \ee
 Combining with the above estimates we obtain
    \begin{equation}
\label{str_estimate2}
      \begin{aligned}
        \norm{\dot \Psi_k}{L^2(0,T;\Vb')}&
\leq  c_0 \kappa [f,b_2,u,\Psi_0].
      \end{aligned}
    \end{equation}
\end{enumerate}
By lemma \ref{lemwcv}
a subsequence of $(\Psi_k)$
strongly converges in 
$L^2(0,T;\Hb)$ and weakly in 
$L^2(0,T;\Vb) \cap H^1(0,T;\Vb')$,
while $u b_2 \Psi_k \rar u b_2 \Psi$
weakly in  $L^2(0,T;\bar H)$.
Passing to the limit in 
\eqref{Psiktest1}
we obtain that $\Psi$ is solution of the 
Schr\"{o}dinger equation. 
That $\Psi$ is unique, belongs to $\calx$
and satisfies \eqref{L2_estimate},\eqref{str_estimate}
and \eqref{str_estimate2}
follows from the same techniques as those used 
in the study of the Faedo-Galerkin approximation. 
\end{proof}

 \begin{lemma}
\label{lem:general_parab_setting} For $(f,b_2,u,\Psi_0)\in L^2(0,T;\Vb)\times W^{1,\infty}(\Omega)\times L^{\infty}(\Om) \times \Vb$ the semigroup solution coincides with the variational solution.
\end{lemma}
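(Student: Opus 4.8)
The plan is to show that the variational solution furnished by Theorem~\ref{thm:reg} is in fact a weak solution in the sense of Definition~\ref{ThmAC-def}, and then to invoke the uniqueness statement of Theorem~\ref{weaksense-thm}, which asserts that the weak solution is unique and coincides with the semigroup solution. First I would record that the variational solution $\Psi$ lies in the space $\calx = L^\infty(0,T;\Vb)\cap H^1(0,T;\Vb')$ introduced in \eqref{spaceXa}; since then $\Psi\in L^2(0,T;\Vb)$ and $\dot\Psi\in L^2(0,T;\Vb')$, the standard Lions--Magenes embedding gives $\calx\hookrightarrow C(0,T;\Hb)$, so that $\Psi\in C(0,T;\Hb)$ with $\Psi(0)=\Psi_0$, which is exactly the regularity demanded of a weak solution.

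Next I would make explicit the correspondence between the concrete equation \eqref{Schroedinger_equ} and the abstract data of \eqref{semg1}: from \eqref{A_elliptic} the generator is $\A_{\Hb}=i\A_0$, while $\B_1=0$ and $\B_2$ is multiplication by $-ib_2$. Since $\A_0$ is self-adjoint with domain $\Hb^2(\Om)\cap\Vb$, its conjugate transpose satisfies $\A_{\Hb}^*=-i\A_0$ on the same domain, so in particular $\dom(\A_{\Hb}^*)\subset\Vb$. The heart of the argument is to take an arbitrary $\phi\in\dom(\A_{\Hb}^*)$ as test function $z=\phi$ in the variational identity \eqref{eq-vf}, and to use \eqref{a0yz} in the form $a(\Psi(t),\phi)=\la\Psi(t),\A_0\phi\ra_{\Hb}$ --- valid for $\Psi(t)\in\Vb$ and $\phi\in\dom(\A_0)$ by integration by parts --- so as to transfer the elliptic operator onto the smooth test function. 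Taking the complex conjugate of the resulting identity, using the conjugate symmetry $\overline{\la a,b\ra}=\la b,a\ra$ of the antiduality and the fact that $\ddt\la\phi,\Psi(t)\ra=\la\phi,\dot\Psi(t)\ra_{\Vb}$ since $\phi$ is time-independent, and collecting the factors of $i$, then yields precisely \eqref{weak_ball} with $\B_1=0$ and $\B_2\Psi=-ib_2\Psi$.

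To complete the verification I would note that $t\mapsto\la\phi,\Psi(t)\ra$ is absolutely continuous on $[0,T]$: this follows from $\phi\in\Vb$ together with $\Psi\in H^1(0,T;\Vb')$, which places $t\mapsto\la\phi,\Psi(t)\ra$ in $W^{1,1}(0,T)$. Thus $\Psi$ meets all the requirements of Definition~\ref{ThmAC-def} and is a weak solution; by Theorem~\ref{weaksense-thm} it therefore coincides with the semigroup solution defined by \eqref{semPsi}, which is the claim. I expect the main obstacle to be essentially bookkeeping rather than analysis: one must reconcile the two distinct pairings (the $\Vb',\Vb$ duality used in \eqref{eq-vf} versus the antiduality $\la\cdot,\cdot\ra_{\Hb}$ of Definition~\ref{ThmAC-def}), keep track of the antilinearity of the second slot, and correctly combine the three separate factors of $i$ arising from $\A_{\Hb}=i\A_0$, $\A_{\Hb}^*=-i\A_0$, and $\B_2=-ib_2$. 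Once the conventions are fixed consistently, each term matches and no estimate beyond the integration-by-parts identity \eqref{a0yz} is required.
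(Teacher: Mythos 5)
Your argument is correct and is exactly the route the paper intends: the paper's own ``proof'' merely defers to the analogous lemma in the companion paper \cite{aronna:hal-01273496}, whose mechanism is precisely what you spell out --- verify that the variational solution of \eqref{eq-vf} satisfies Definition~\ref{ThmAC-def} by testing against $\phi\in\dom(\A^*)$, transferring $\A_0$ onto $\phi$ via \eqref{a0yz}, and then invoking the uniqueness of weak solutions from Theorem~\ref{weaksense-thm}. Your bookkeeping of the antiduality conventions and the factors of $i$ (with $\A^*=-i\A_0$ and $\B_2=-ib_2$) is consistent, so no gap remains.
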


\begin{proof}

That the variational and semigroup solution coincide can be shown by a similar argument as in \cite[Lemma 5.4]{aronna:hal-01273496}. 
\end{proof}

The corresponding data of the abstract theory are
$\B_1 \in \Hb$ equal to zero, and 
 $\B_2\in \L(\Hb)$ defined by 
$(\B_2 y)(x) := -ib_2 (x) y(x)$ for $y$ in  $\Hb$ and
$x\in \Om$.
The cost function is, given $\alpha_1 \in \cR$: 
\be
\label{def-cost-fun-heat}
\ba{lll}
J(u, y) := & \disp 
\alpha_1 \int_0^T   u(t) \dd t 
+
\half\int_{(0,T)\times\Om}
( y(t,x)-y_d(t,x) )^2 \dd x \dd t 
\\ & \hspace{10mm} \disp +
 \half \int_\Om (y(T,x)-y_{dT}(x))^2 \dd x.
\ea\ee
We assume that 
\be
\label{hyp-yd-schroedinger}
\quad 
y_d \in C(0,T;\Vb); \quad  y_{dT} \in \Vb.
\ee
For $u\in L^1(0,T)$, write the reduced cost as 
$F(u) := J(u,y[u])$.
The optimal control problem is, $\U_{ad}$
being defined in \eqref{Uad}:
\be
\Min F(u); \quad u \in \U_{ad}.
\ee
\subsection{Compactness for the Schr\"odinger equation}\label{sec:control_problem}
To prove existence of an optimal control of \eqref{problem:OC} 
we have to verify the compactness hypothesis \eqref{compachyp}. 
\if{
Therefore the additional regularity in time is required we obtain by
using the equivalent weak formulation derived in Theorem
\ref{thm:reg}. \fi

\begin{proposition}
 Problem \eqref{problem:OC} for equation \eqref{Schroedinger_equ} and cost function \eqref{def-cost-fun-heat} has a nonempty set of solutions.
\end{proposition}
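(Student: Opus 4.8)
The plan is to apply the abstract existence result Theorem~\ref{existsol}, whose sole hypothesis is the compactness condition \eqref{compachyp}; concretely, I would run the \emph{direct method} and invoke \eqref{compachyp} exactly at the point where one passes to the limit in the bilinear term. Here $\A=\A_{\Hb}=i\A_0$ generates the unitary Schr\"odinger semigroup and $\B_2\Psi=-ib_2\Psi$ is the bounded multiplication operator associated with $b_2\in W^{2,\infty}_0(\Om)$, so $\B_2\in\L(\Hb)$. The two facts I would borrow from earlier are the regularity estimate of Theorem~\ref{thm:reg}, which produces uniform $\calx$-bounds on the state, and the weak-convergence Lemma~\ref{lemwcv}, which supplies both an Aubin--Lions strong-compactness statement and the limit of the product $u_kb_2\Psi_k$.

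First I would pick a minimizing sequence $u_k\in\U_{ad}$ for $F$. Since $\U_{ad}$ is bounded in $L^\infty(0,T)$, convex and closed, it is weakly$*$ sequentially compact, so after extraction $u_k\rightharpoonup^*\ub$ with $\ub\in\U_{ad}$. With the fixed data $f\in L^2(0,T;\Vb)\cap C(0,T;\Hb)$, $b_2\in W^{2,\infty}_0(\Om)$ and $\Psi_0\in\Vb$ of \eqref{assump_Schroed}, and with $u_k$ bounded in $L^\infty(0,T)$, the strong estimate \eqref{str_estimate} bounds the states $\Psi_k:=\Psi[u_k]$ uniformly in $\calx$. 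This is the step that substitutes for parabolic smoothing: the Schr\"odinger semigroup gains no spatial regularity, but keeping the bilinear term inside the equation and exploiting the $\Vb$-regular forcing $f$ yields a uniform $\calx$-bound nonetheless.

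Next I would invoke Lemma~\ref{lemwcv}: along a further subsequence $\Psi_k\to\Psi$ strongly in $L^2(0,T;\Hb)$ and weakly in $L^2(0,T;\Vb)\cap H^1(0,T;\Vb')$, and, crucially, $u_kb_2\Psi_k\rightharpoonup\ub\,b_2\Psi$ weakly in $L^2(0,T;\Hb)$. Passing to the limit in the variational formulation \eqref{eq-vf} identifies $\Psi=\Psi[\ub]$. In the notation of \eqref{compachyp}, strong $L^2(0,T;\Hb)$-convergence of $\Psi_k$ and boundedness of $b_2$ give relative compactness of $\B_2\Psi_k=-ib_2\Psi_k$ in $L^2(0,T;\Hb)$, which is precisely the compactness needed to apply Theorem~\ref{existsol}.

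Finally I would verify sequential lower semicontinuity of $J$ along the sequence. The control term $\alpha_1\int_0^T u_k\,\dd t$ converges by the weak$*$ convergence of $u_k$; the running term $\tfrac12\int_0^T\|\Psi_k(t)-y_d(t)\|_{\Hb}^2\,\dd t$ converges by strong $L^2(0,T;\Hb)$-convergence of $\Psi_k$; and since the $\calx$-bound forces $\Psi_k(T)\rightharpoonup\Psi(T)$ weakly in $\Hb$, weak lower semicontinuity of the norm gives $\liminf_k\tfrac12\|\Psi_k(T)-y_{dT}\|_{\Hb}^2\ge\tfrac12\|\Psi(T)-y_{dT}\|_{\Hb}^2$. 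Hence $F(\ub)\le\liminf_k F(u_k)=\inf_{\U_{ad}}F$, so $\ub$ is a solution. The main obstacle is the single limit passage in $u_kb_2\Psi_k$, where weak$*$ convergence of the control must be paired with strong convergence of the state; this pairing is exactly the content of Lemma~\ref{lemwcv} and is the Schr\"odinger counterpart of the compactness hypothesis \eqref{compachyp}.
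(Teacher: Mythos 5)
Your proposal is correct and follows essentially the same route as the paper: the paper's proof is simply the two-line observation that Theorem~\ref{existsol} applies because the compactness hypothesis \eqref{compachyp} is supplied by Lemma~\ref{lemwcv}, and what you have written is that same argument unfolded into an explicit direct-method proof (minimizing sequence, uniform $\calx$-bounds from Theorem~\ref{thm:reg}, Aubin--Lions via Lemma~\ref{lemwcv}, passage to the limit in $u_k b_2\Psi_k$, lower semicontinuity of $J$). The only cosmetic slip is identifying strong $L^2(0,T;\Hb)$-convergence of $b_2\Psi_k$ with the literal statement of \eqref{compachyp} (which concerns the linear map $f\mapsto\B_2 y[y_0,f]$), but since your argument is self-contained this does not affect its validity.
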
 

\begin{proof}
This follows from theorem \ref{existsol},
whose compactness hypothesis holds thanks to
lemma \ref{lemwcv}.
\end{proof}

\subsection{Commutators}

Given $y\in\dom(\A_{\Hbc})$, we have by \eqref{A_elliptic}
that
\be
\label{rem-bracket-heat1a}
\ba{lll} 
M_1 y &=& \disp
-\sum_{j,k=1}^n \left( 
\frac{\partial b_2}{\partial x_k}  
\left[a_{jk} 
\frac{\partial y}{\partial x_j}  \right]
+ \frac{\partial }{\partial x_k}   \left[
a_{jk} 
y \frac{\partial b_2}{\partial x_j}  
\right]
\right).
\ea\ee
As expected, this commutator is
 a first order differential operator that has a continuous extension
to the space $\Vb$. 
In a similar way we can check that $[M_1,\B_2]$
is the ``zero order'' operator given by 
\be\label{m1b}
[M_1,\B_2] y =  2i \sum_{j,k=1}^n a_{j,k} 
\frac{\partial b_2}{\partial x_j}  
\frac{\partial b_2}{\partial x_k}  y.
\ee

\begin{remark}
\label{rem-bracket-heat-pc}
In the case of the Laplace operator, i.e. when
$a_{jk} = \delta_{jk}$,  we find that for $y\in \Vb$
\be
\label{rem-bracket-heat1b}
M_1 y = 
-2 \nabla b_2 \cdot \nabla y - y \Delta b_2;
\quad
[M_1,\B_2] y =  2  iy | \nabla b_2 |^2,
\ee 
and then for $p\in \Vb$ we have 
\if{
\footnote{\begin{align*}
\la M^*_1 \pb, y\ra &= \int_{\Om} (-2 \nabla b_2 \cdot \nabla y - y \Delta b_2) \bar p \dd x
=\int_{\Om} (2 \ddiv (\bar p \nabla b_2) - \bar p \Delta b_2) y \dd x
=\int_{\Om} (2 \nabla b_2 \cdot \nabla \bar p + \bar p \Delta b_2)  y \dd x
\end{align*}
}
} \fi
\be
M^*_1 p = 2 \nabla b_2 \cdot \nabla  \bar p +  \bar p \Delta b_2.
\ee
Similarly we have
\be
\left\{ \ba{lll}
M_2 y &= 2i \nabla b_2^2 \cdot \nabla y + iy \Delta b_2^2; 
\\
  \left[M_2,\B_2\right] y &=  -2  iy     | \nabla b^2_2 |^2,
\\
M^*_2 p &= -i(2 \nabla b^2_2 \cdot \nabla  \bar p +  \bar p \Delta b_2^2).
\ea\right.
\ee
\end{remark}

\subsection{Analysis of optimality conditions}
For the sake of simplicity we only discuss the case of the 
Laplace operator. 
The costate equation is then 
\be
- \dot p + i \Delta p = \Psi - \Psi_d + i u b_2 p \; 
\text{ in $(0,T)\times \Om$;}  \quad  p(T) = \Psi(T) - \Psi_{dT}. 
\ee
Remembering the expression of $b^1_{z}$
in \eqref{equ-bunz}, we obtain that 
the equation for $\xi := \xi_z$ introduced in \eqref{diffeqxi} 
reduces to 
\be
\label{xi-equ-heat}
\dot\xi - i\Delta \xi  = -i \uh b_2 \xi +w 
(ib_2 f  + 2\nabla b_2\cdot \nabla \Psi + \Psi \Delta b_2)
\; \text{ in $(0,T)\times \Om$;}  \quad  \xi(0) =0. 
\ee
The quadratic forms $\Q$ and $\hatQ$ defined in \eqref{tildeQ} and \eqref{Omega}
 are as follows. First
\be 
  \Q(z,v) = \int_0^T \Big( \norm{z(t)}{\Hb}^2
 + 2 v(t)\Re \la \ph(t),b_2  z(t) \ra_{\Hb}  \Big)\dd t + \norm{z(T)}{\Hb}^2,
 \ee
and second, 
\be
\hatQ(\xi,w,h) = \hatQ_T(\xi,h) +\hatQ_a(\xi,w)+\hatQ_b(w);
\quad \hatQ_b(w):= \int_0^T w^2(t) R(t)\dd t.
\ee
Here $R \in C(0,T)$, and 
\begin{align}
\hatQ_T(\xi,h)&:= \norm{\xi(T) -i h b_2 \Psih(T)}{\Hb}^2
 - h^2 \Re\la \ph(T),b_2^2 \Psih(T)\ra_{\Hb} +h \Re\la i\ph(T),b_2 \xi(T)\ra_{\Hb},\\
\hatQ_a(\xi,w)&:= 
 \int_0^T \Big( \norm{\xi}{\Hb}^2 + 2 w\Re(  i\la \xi,b_2 \Psih \ra_{\Hb}
 + i\la \Psih-\Psi_d,b_2\xiz\ra_{\Hb} - 
\la M^*_1 \ph, \xiz\ra_{\Hb})   
\Big) \dd t, \\
R(t)&:=  \norm{b_2\Psih}{\Hb}^2 - \Re\la\Psih-\Psi_d,b_2^2\Psih\ra_{\Hb} + \Re\la\ph(t),-b_2^2 f(t) 
- 2i|\nabla b_2|^2  \Psih\ra_{\Hb}.
\end{align}

\begin{theorem} 
{\rm (i)}
The second order necessary condition 
\eqref{lem3-ome} holds, i.e.,
\be
\label{lem3-ome-haet}
  \hatQ (\xi[w],w,h) \ge 0\quad \text{for all } (w,h) \in PC_2(\uh).
  \ee
 {\rm (ii)} 
$R(t)\geq 0$ over singular arcs. 
\\ {\rm (iii)} 
Let \eqref{finite_structures}-\eqref{RposTBB} 
hold. Then the second order optimality condition
\eqref{sufcondso} holds iff  the quadratic growth condition
\eqref{sufcondqg}  is satisfied.
\end{theorem}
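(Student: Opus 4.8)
The plan is to obtain each of the three assertions as a direct specialization of the abstract results of Sections~\ref{sec:soocg}--\ref{sec:soonoc-goh}, so that the real work reduces to checking that the concrete Schr\"odinger data satisfy the standing hypotheses of the Goh-transform machinery. Here the abstract data are $\Hbc=\Hb$, $\A=\A_{\Hb}=i\A_0$ as in \eqref{A_elliptic}, $\B_1=0$, $(\B_2 y)(x)=-ib_2(x)y(x)$, and $E_1=\Vb$. First I would record that $\Vb$ has the restriction property (Definition~\ref{def:restr_prop}): the group preserves $H^1_0(\Om;\CC)$, and multiplication by $b_2\in W^{2,\infty}_0(\Om)$ maps $\Vb$ into itself, so $\B_2\in\L(\Vb)$ and $\B_1=0\in\Vb$. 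The algebraic conditions \eqref{hyp-Goh-tr1a} are equally immediate: $\B_1=0\in\dom(\A)$, and since $b_2$ preserves $\Hb^2(\Om)\cap\Vb=\dom(\A)$ and vanishes on $\partial\Om$, both $\B_2$ and $\B_2^*$ leave $\dom(\A)$ and $\dom(\A^*)$ invariant.

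Next I would verify the regularity hypotheses \eqref{hyp-Goh-tr1b}--\eqref{hyp-Goh-tr2}. The commutator computations are already available: by \eqref{rem-bracket-heat1a} and Remark~\ref{rem-bracket-heat-pc}, $M_1=[\A,\B_2]$ is a first-order operator, hence extends continuously to a map $\Vb\to\Hb$, while $[M_1,\B_2]$ is the zero-order multiplier \eqref{m1b}. Theorem~\ref{thm:reg} gives $\Psih\in C(0,T;\Vb)\subset L^2(0,T;E_1)$, whence $[M_1,\B_2]\Psih\in L^\infty(0,T;\Hb)$ at once. The conditions on $M_k^*\ph$ require space regularity of the costate; for this I would apply Theorem~\ref{thm:reg} to the backward costate equation, whose terminal datum $\Psih(T)-\Psi_{dT}$ lies in $\Vb$ (using $\Psih(T)\in\Vb$ and \eqref{hyp-yd-schroedinger}) and whose source is controlled in $C(0,T;\Vb)$, yielding $\ph\in C(0,T;\Vb)$. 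As $M_k^*$ is first-order, this gives $M_k^*\ph\in C(0,T;\Hb^*)\subset L^\infty(0,T;\Hb^*)$, the remaining requirements on $\B_2^2 f$ and $\Psi_d$ coming from \eqref{assump_Schroed} and \eqref{hyp-yd-schroedinger}.

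With the hypotheses in force, assertion~(i) is the transported necessary condition: the Goh identity $\Q(z[v],v)=\hatQ(\xi[w],w,w(T))$ of Lemma~\ref{lem3} together with Theorem~\ref{thm:nec_sec_ord_cond} yields \eqref{lem3-ome-haet}. For assertion~(ii) I must still check the compactness \eqref{comp-hyp-xi} of $w\mapsto\xi[w]$ from $L^2(0,T)$ to $L^2(0,T;\Hb)$; the key observation is that, although the Schr\"odinger group provides no parabolic smoothing, the Duhamel representation $\xi[w](t)=\int_0^t U(t,s)\,b^1_z(s)\,w(s)\,\dd s$ for \eqref{xi-equ-heat} exhibits $\xi[w]$ as an integral operator acting on the \emph{scalar} control $w$ with $\Hb$-valued kernel $k(t,s)=\mathds{1}_{\{s<t\}}U(t,s)b^1_z(s)$. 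Since the propagator $U$ is unitary and $b^1_z\in C(0,T;\Hb)$ (cf.\ \eqref{equ-bunz}), one has $\|k\|_{L^2((0,T)^2;\Hb)}<\infty$, so $k$ is approximable in this norm by finite-rank kernels and the operator is a norm limit of finite-rank maps, hence compact. Granting this, Lemma~\ref{lem3} gives $R(t)\ge0$ on singular arcs, and assertion~(iii) is the verbatim specialization of the equivalence between \eqref{sufcondso} and \eqref{sufcondqg} proved at the end of Section~\ref{sec:soonoc-goh}, the structural hypotheses \eqref{finite_structures}--\eqref{strict_complementarity} and \eqref{RposTBB} being assumed. The main obstacle is the costate regularity underpinning \eqref{hyp-Goh-tr1b}--\eqref{hyp-Goh-tr2}: one needs $\ph\in C(0,T;\Vb)$ for the first-order operators $M_k^*$ to act meaningfully, and establishing this for the time-reversed Schr\"odinger equation — which, lacking smoothing, merely propagates the $\Vb$-regularity already present in the data — is the delicate quantitative input; the compactness, by contrast, is soft once the scalar, Hilbert--Schmidt structure of the Duhamel kernel is recognized.
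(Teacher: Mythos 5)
Your proposal is correct and follows the same overall strategy as the paper: reduce the theorem to a verification of the standing hypotheses \eqref{hyp-Goh-tr1a}, \eqref{hyp-Goh-tr1b}, \eqref{hyp-Goh-tr2} and \eqref{comp-hyp-xi} with $E_1=\Vb$, $\B_1=0$, $(\B_2y)=-ib_2y$, and then quote Lemma~\ref{lem3}, Theorem~\ref{thm:nec_sec_ord_cond} and the abstract equivalence of \eqref{sufcondso} and \eqref{sufcondqg}. You are in fact more explicit than the paper on one point it passes over in silence, namely the costate regularity $\ph\in C(0,T;\Vb)$ needed for $M_k^*\ph$ to make sense; your plan of applying Theorem~\ref{thm:reg} to the backward equation with terminal datum $\Psih(T)-\Psi_{dT}\in\Vb$ is the right way to supply it. The one place where you genuinely diverge is the compactness in part (ii). The paper argues via Aubin--Lions: it claims $\xi[w]\in L^2(0,T;\Vb)\cap H^1(0,T;\Vb')$ and uses the compact embedding of that space into $L^2(0,T;\Hb)$. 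You instead exploit the fact that the control is scalar: the Duhamel formula exhibits $w\mapsto\xi[w]$ as an integral operator from $L^2(0,T)$ to $L^2(0,T;\Hb)$ with kernel $k(t,s)=\mathds{1}_{\{s<t\}}U(t,s)b^1_z(s)$, which is square integrable because the perturbed propagator is uniformly bounded on $\Hb$ and $b^1_z\in C(0,T;\Hb)$, hence the operator is a norm limit of finite-rank maps. This buys you something real: the Aubin--Lions route requires an \emph{a priori} bound on $\xi[w]$ in $L^2(0,T;\Vb)$, which via Theorem~\ref{thm:reg} would need the forcing $w\,b^1_z$ to have $\Vb$-regularity, and $b^1_z$ contains $\nabla b_2\cdot\nabla\Psih$, which is only known to lie in $C(0,T;\Hb)$ under the stated hypotheses; your Hilbert--Schmidt argument needs only $b^1_z\in L^2(0,T;\Hb)$ and is therefore more robust (and arguably closes a small gap in the paper's own justification). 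Both routes yield the compactness needed for Lemma~\ref{lem3} to give $R\ge0$ on singular arcs, and the rest of your argument matches the paper's.
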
 

\begin{proof}
(i) Conditions \eqref{hyp-Goh-tr1a}(i) and (ii) are satisfied with \eqref{assump_Schroed}. Since we have 
\be
\overline{[-i\Delta,(-ib_2)^k]}\Psih=-(-i)^{k-1}(\Delta b_2^k \Psih +2\nabla b^k_2 \nabla \Psih), \quad k=1,2,
\ee
i.e. the commutator is a first order differential operator and has an extension to the space $\Vb$, we obtain \eqref{hyp-Goh-tr1b}(i) with $E_1=\Vb$. \eqref{hyp-Goh-tr1b}(ii) and (iii) follow from the regularity assumptions in \eqref{assump_Schroed} and \eqref{hyp-yd-schroedinger}. 

 (ii) The compactness hypothesis \eqref{comp-hyp-xi} for
\be
w\mapsto \xi[w],\quad L^2(0,T) \rightarrow L^2(0,T;\Hb) 
\ee follows from \eqref{assump_Schroed}, since hence, $\xi[w] \in
L^2(0,T;\Vb) \cap H^1(0,T;\Vb')$ which is compactly embedded in $
L^2(0,T;\Hb)$ by 
Aubin's lemma \cite{MR0152860}.

(iii) Condition \eqref{hyp-Goh-tr2} follows also from the assumptions in \eqref{assump_Schroed} and \eqref{hyp-yd-schroedinger}. 
\end{proof}


\begin{remark}
It is not difficult to extend such results
 for more general differential operators of the type,
where the $a_{jk}$ are as before, 
$b\in L^\infty(\Om)^n$ and $c\in L^\infty(\Om)$:
\be \label{general_diff_op}
(\A_{\Hbc} \Psi)(t,x)= - i \sum_{j,k=1}^n \frac{\partial}{\partial x_k} 
\left[a_{jk}(x) \frac{\partial}{\partial x_j}  \Psi (t,x) \right]
 + \sum_{j=1}^n \frac{\partial ( b_{j}(x) \Psi(t,x)) }{\partial x_j}    + c \Psi(t,x).
\ee
\end{remark}

\section{Numerical example}\label{sec:numerical_example}
The question of existence of a singular arc is not addressed here, it remains an open problem. Nevertheless, we analyze this issue numerically for the one-dimensional Schr\"odinger equation. We present a numerical example where a singular arc occurs and is stable with respect to the discretization. Let the spatial domain be given as $\Omega\subset \RR$ and set $T=10$. 
We discretize the problem by standard finite differences. In space we choose $40$ steps and in time $200$. 
For the computational realization we use the optimal control toolbox
Bocop \cite{BonMarGre12} 
which uses the nonlinear programming solver IPOPT, see 
\cite{WachBieg06}.
In Figure \ref{fig1} we see that singular arcs appear.
\begin{figure}
\centering
\includegraphics[width=0.7\textwidth]{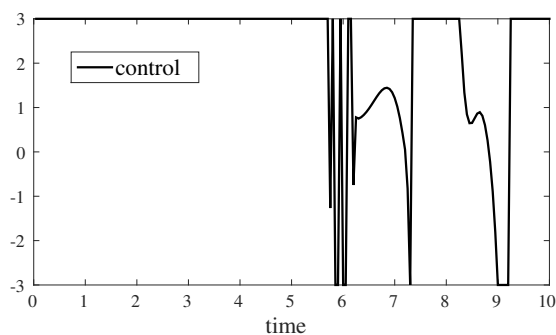}
\caption{Singular arc}\label{fig1}
\end{figure}

\if { 
\appendix

\section{Warning: not cleaned}

\subsection{Real space structure}\label{sec:absetdual-rs}
Let $\Hbc$ be a complex space, identified with 
$\H\times\H$ where $\H$ is a real space. 
We can therefore identify $\M\in \L(\Hbc)$
with a linear operator say $\M_P$ 
on $\H\times\H$, with block representation 
\be
\label{myund1a}
\M_P=
\begin{pmatrix}
      \M_{11} & -\M_{12} \\ \M_{21}  & \M_{22} 
     \end{pmatrix}.
\ee
Since $\M$ is complex linear, 
$\M (i \xb) = i \M \xb$.
It easily follows that there exists 
$\M_1$ and $\M_2$ in $\L(\H)$ such that 
\be
\label{myund1}
\M_P=
\begin{pmatrix}
      \M_1 & -\M_2\\ \M_2 & \M_1
     \end{pmatrix}.
\ee
and we can write for $y=y_1+ i y_2$ in $\Hbc$:
\be
\label{myund}
\M y = (\M_1+i\M_2)(y_1+ i y_2).
\ee
Let now $\M$ be unbounded, but with a dense domain 
in product form:
\be
\dom(\M) = \D_1\times \D_2; \quad
\D_i \subset \H, \;\; i=1,2.
\ee
Then necessarily $\D_1$ and $\D_2$ are dense 
subspaces of $\H$.
Computing $\M y$ for $y$ either in $\D_1$
or in $\D_2$, we obtain that for $y\in\dom(\M)$,
\eqref{myund} holds for some unbounded operators
$\M_1$ and $\M_2$ with domains $\D_1$ and
$\D_2$ resp., and so, 
we can still identify $\M$ with the unbounded operator in $\H\times
\H$ 
with domain $\D_1\times \D_2$, 
defined by \eqref{myund1}.

In the sequel we assume that the generator $\A$
of the semigroup has such a structure, that 
is to say:
\be
\label{myund2}
\dom(\A) = \D_1 \times \D_2.
\ee
Then as in the case of a bounded operator 
we have a block representation for $\A$
similar to \eqref{myund1a}, and since $\A$
is complex linear we deduce that 
$\A y = (\A_1+i\A_2)y$. 
So the corresponding real space operator is
\be
\begin{pmatrix}
     \A_1 & -\A_2\\ \A_2 & \A_1
     \end{pmatrix}.
\ee
It follows that
\be
\D_1 = \D_2 = \dom(\A_1)  = \dom(\A_2).
\ee

\subsection{Conjugacy}\label{sec:absetdual-rsad}
Coming back to the bounded operator $\M$,
we may compute its conjugate transpose taking either
$x=x_1$ or $x=i x_2$. It comes 
\be
\ba{lll}
\la x^*, \M y_1 \ra &= &
\la x_1+i x_2, (\M_1 +i \M_2)y_1)\ra 
\\ & =& 
\la x_1,\M_1y_1\ra
-i \la x_1,\M_2y_1\ra
+
\la x_2,\M_2y_1\ra
 +i \la x_2,\M_1y_1\ra
\\ & =&
\la \M_1^\top x_1,y_1\ra
-i \la \M_2^\top x_1,y_1\ra
+
\la \M_2^\top x_2,y_1\ra
 +i \la \M_1^\top x_2,y_1\ra
\\ & =&
\la \M_1^\top x_1 + \M_2^\top x_2
+i (\M_1^\top x_2
-\M_2^\top x_1),y_1\ra
\\ & =&
\la (\M_1^\top-i \M_2^\top) x,y_1\ra
\ea\ee
By similar computations when 
$y=i y_2$ with $y_2\in \H$, we get that for $y_1\in \H$:
\be
\la x, \M y \ra 
=
\la \M^*\xb^*, \xb \ra 
\quad
\text{where $\M^*:= \M^\top_1-i \M^\top_2$.}
\ee

Similarly, the adjoint of $\A$ has the form 
\be
\A^* = \A^\top_1 - i \A^\top_2.
\ee

{\bf} Warning: the above formula is not well justified, 
we will have to come back to it.

\section{Second order sufficient optimality conditions}

\subsection{Main results}

\subsection{See later}
We take $H$ (resp. $\bar{H}$), as the pivot space,
i.e., identified to its dual (resp. antidual),
and we let e.g. $V'$ (resp. $\Vb^*$)
denote the dual (resp. antidual) space of $V.$
Then $H$ (resp. $\bar{H}$)
 is identified to a (dense) subset of 
$V'$  (resp. $\Vb^*$), the linear 
(resp. antilinear) form
corresponding to $h\in H$ (resp. $\Hb$) being
$v\mapsto (v,h)_H$
(resp. $v\mapsto (h,v)_{\Hb}$).
We denote by $\la v^*,v\ra_V$ 
the duality product between $v\in V$ and
$v^*\in V'$. So, if $v^*\in H$ we have that
$\la v^*,v\ra_V = (v^*,v)_H$. 
We denote by $\la v^*,v\ra_{\Vb}$ 
the duality product between $v\in \Vb$ and
$v^*\in \Vb^*$.

We identify $\Hb^*$ with a subset of $V^*$
through the mapping on $V$ defined by 
$v\mapsto (h,v)_{\Hb}$. 
So when $v^*\in \Hb^*$ we have that
$\la v^*,v\ra_{\Vb} = (v^*,v)_{\Hb}$. 

} \fi

\bibliographystyle{amsplain}
\bibliography{lit,hjb}
\end{document}